\DeclareSymbolFont{cyrletters}{OT2}{wncyr}{m}{n}
\DeclareMathSymbol{\Sha}{\mathalpha}{cyrletters}{"58}
\DeclareSymbolFont{cyrletters}{OT2}{wncyr}{m}{n}
\DeclareMathSymbol{\Sha}{\mathalpha}{cyrletters}{"58}
\def \ooverline #1#2#3%
\DeclareMathOperator{\Sel}{Sel}
\DeclareMathOperator{\Gal}{Gal}
\DeclareMathOperator{\Ad}{Ad}
\DeclareMathOperator{\Def}{Def}
\DeclareMathOperator{\End}{\mathsf{End}} 
\DeclareMathOperator{\Hom}{\mathsf{Hom}}
\DeclareMathOperator{\sHom}{\mathsf{sHom}} 
\DeclareMathOperator{\Ker}{Ker} 
\DeclareMathOperator{\im}{Im} 
\DeclareMathOperator{\Coker}{Coker}
\renewcommand{\H}{\mathrm{H}}
\DeclareMathOperator{\Spec}{Spec}
\DeclareMathOperator{\ord}{n.o}
\DeclareMathOperator{\pr}{pr}
\DeclareMathOperator{\Lie}{Lie}
\DeclareMathOperator{\GL}{\mathsf{GL}}
\DeclareMathOperator{\diag}{diag}
\newcommand{\cC}{\mathcal{C}}
\newcommand{\cD}{\mathcal{D}}
\newcommand{\cF}{\mathcal{F}}
\newcommand{\cM}{\mathcal{M}}
\newcommand{\cO}{\mathcal{O}}
\newcommand{\cR}{\mathcal{R}}
\def\im{\mathop{\rm im}}
\def\ker{\mathop{\rm Ker}}
\def\g{{\mathfrak g}}
\def\t{{\mathfrak t}}
\def\z{{\mathfrak z}}
\def\b{{\mathfrak b}}
\def\n{{\mathfrak n}}
\def\m{{\mathfrak m}}
\def\N{{\mathbb N}}
\def\Q{{\mathbb Q}}
\def\e{{\bf e}}
\def\Z{{\mathbb Z}}
\def\adots{\mathinner{\mkern2mu\raise1pt\hbox{.}\mkern3mu\raise4pt\hbox{.}\mkern1mu\raise7pt\hbox{.}}}
\theoremstyle{plain}
\newtheorem{thm}{Theorem}[section]
\newtheorem{pro}[thm]{Proposition}
\newtheorem{lem}[thm]{Lemma}
\newtheorem{cor}[thm]{Corollary}
\theoremstyle{definition}
\newtheorem{de}[thm]{Definition}
\newtheorem{expl}[thm]{Example}
\theoremstyle{remark}
\newtheorem{rem}[thm]{Remark}
\newcommand{\thmref}[1]{Theorem~\ref{#1}}
\newcommand{\propref}[1]{Proposition~\ref{#1}}
\newcommand{\lemref}[1]{Lemma~\ref{#1}}
\newcommand{\defref}[1]{Definition~\ref{#1}}
\newcommand{\remref}[1]{Remark~\ref{#1}}
\newcommand{\RNum}[1]{\uppercase\expandafter{\romannumeral #1\relax}}
\def\O{{\mathcal{O}}}
\def\gt{{\mathfrak t}}
\newcommand{\Der}{{\mathrm {Der}}}
\newcommand\Map{{\mathrm {Map}}}
\newcommand{\dslash}{{/\mkern-6mu/}}
\newcommand\FFS{{\mathbf {FFS}}}
\newcommand\FS{{\mathbf {FS}}}
\newcommand\PsCh{{\mathrm {PsCh}}}
\newcommand\aDef{{\mathrm {aDef}}}
\newcommand\bDef{{\mathrm {bDef}}}
\newcommand\ad{{\mathrm {ad}}}
\newcommand\op{{\mathrm {op}}}
\newcommand\Cat{{\mathbf {Cat}}}
\newcommand\Gp{{\mathbf {Gp}}}
\newcommand\Gpd{{\mathbf {Gpd}}}
\newcommand\semGp{{\mathbf {semGp}}}
\newcommand\Set{{\mathbf {Sets}}}
\newcommand\sSet{{s\mathbf {Sets}}}
\newcommand\Alg{{\mathbf {Alg}}}
\newcommand\CR{{\mathbf {CR}}}
\newcommand\sCR{{s\mathbf {CR}}}
\newcommand\Art{{ \mathbf {Art} }}
\newcommand\sArtT{{{}_{\O} \backslash s\mathbf {Art} /_{T}}}
\newcommand\sArt{{{}_{\O} \backslash s\mathbf {Art} /_{k}}}
\newcommand\Mod{{\mathbf {Mod}}}
\newcommand\sMod{{s\mathbf {Mod}}}
\newcommand{\Ch}{{\mathbf {Ch}}}
\newcommand\hocolim{{\mathrm{hocolim}}}
\newcommand\holim{{\mathrm{holim}}}
\newcommand{\Ho}{{\mathrm {Ho}}}
\newcommand{\DK}{{\mathrm {DK}}}
\newcommand{\Ex}{{\mathrm {Ex}}}
\newcommand{\hofib}{{\mathrm {hofib}}}
\newcommand{\Bi}{{\mathrm {Bi}}}
\newcommand{\BG}{{\mathcal {B}G}}
\newcommand{\BB}{{\mathcal {B}B}}
\newcommand{\BZ}{{\mathcal {B}Z}}
\newcommand{\barBG}{{\bar{\mathcal {B}}G}}
\newcommand\loc{{\mathrm {loc}}}
\title{Simplicial Galois Deformation Functors}
\author{Y. Cai and J. Tilouine}
\thanks{ This research was supported in part by the International Centre for Theoretical Sciences (ICTS) 
during a visit for participating in the program Perfectoid spaces (Code: ICTS/perfectoid2019/09).
 The authors are partially supported by the ANR grant CoLoSS ANR-19-PRC.} 
\date{}
\begin{document}

	\begin{abstract}
		
	In \cite{GV18}, the authors showed the importance of studying simplicial generalizations of Galois deformation functors. 
	They established a precise link between the simplicial
	universal deformation ring $R$ prorepresenting such a deformation problem (with local conditions) and a derived Hecke algebra. 
	Here we focus on the algebraic part of 
	their study which we complete in two directions. First, we introduce the notion of simplicial pseudo-characters and prove relations between
	the (derived) deformation functors of simplicial pseudo-characters and that of simplicial Galois representations.
	Secondly, we define the relative cotangent complex of a simplicial deformation functor and, in the ordinary case, we relate it 
	to the relative complex of ordinary Galois cochains. Finally, we recall how the latter can be used to relate the fundamental group of $R$ to the ordinary
	dual adjoint Selmer group, by a homomorphism already introduced in \cite{GV18} and studied in greater generality in \cite{TU20}. 
		
	\end{abstract}

	\maketitle

	\tableofcontents

	\section{Introduction}
	
	Let $p$ be an odd prime.  Let $K$ be a $p$-adic field, let $\cO$ be its valuation ring, $\varpi$ be a uniformizing parameter and $k=\cO/(\varpi)$ be the residue field. Let $\Gamma$ be a profinite group satisfying 
	
	$(\Phi_p)$ the $p$-Frattini quotient $\Gamma/\Gamma^p(\Gamma,\Gamma)$ is finite. 
	
	For instance, $\Gamma$ could be $\Gal(F_S/F)$, the Galois group of the maximal $S$-ramified extension of a number field $F$ with $S$ finite.  Let $G$ be a split connected reductive group scheme over $\cO$.  Let $\overline{\rho}\colon \Gamma\to G(k)$ be a continuous Galois representation.  Assume it is absolutely $G$-irreducible, which means its image is not contained in $P(k)$ for a proper parabolic subgroup $P$ of $G$.  The goal of this paper is to present and develop some aspects of the fundamental work \cite{GV18} and the subsequent papers \cite{TU20} and \cite{Cai20}, by putting emphasis on the algebraic notion of simplicial deformation over simplicial Artin local $\cO$-algebras of $\overline{\rho}$.
	
	In the papers mentioned above, it is assumed that the given residual Galois representation is automorphic: $\overline{\rho}=\overline{\rho}_\pi$ for a cohomological cuspidal automorphic representatiton on the dual group of $G$ over a number field $F$; then the (classical and simplicial) deformation problems considered impose certain local deformation conditions satisfied by $\overline{\rho}$ at primes above $p$ and at ramification primes for $\pi$.  The fundamental insight of \cite{GV18} is to relate the corresponding universal simplicial deformation ring to a derived version of the Hecke algebra acting on the graded cohomology of a locally symmetric space.  Actually, the main result \cite[Theorem 14.1]{GV18} (slightly generalized in \cite{Cai20}) is that after localization at the non Eisenstein maximal ideal $\m$ of the Hecke algebra corresponding to $\overline{\rho}$, the integral graded cohomology in which $\pi$ occurs is free over the graded homotopy ring of the universal simplicial deformation ring (and the degree zero part of this ring is isomorphic to the top degree integral Hecke algebra).  This is therefore a result of automorphic nature.  
	
	Here, on the other hand, we want to focus on the purely algebraic machinery of simplicial deformations and pseudo-deformations and their (co)tangent complex for a general profinite group $\Gamma$ satisfying $(\Phi_p)$.  
	
	In \cite[Section 11]{Laf18}, V. Lafforgue introduced the notion of a pseudo-character for a split connected reductive group $G$.  He proved that this notion coincides with that of $G$-conjugacy classes of $G$-valued Galois representations over an algebraically closed field $E$.  The main ingredient of his proof is a criterion of semisimplicity for elements in $G(E)^n$ in terms of closed conjugacy class; it is due to Richardson in  characteristic zero. It has been generalized to the case of an algebraically closed field of arbitrary characteristic by \cite{BMR05} replacing semisimplicity by $G$-complete reducibility (see also \cite{Ser05} and \cite[Theorem 3.4]{BHKT19}). Note that absolute $G$-irreducibility implies $G$-complete reducibility.
	
	Using this (and a variant for Artin rings), Boeckle-Khare-Harris-Thorne \cite[Theorem 4.10]{BHKT19} proved a generalization of Carayol's result for any split reductive group $G$: any pseudo-deformation over $G$ of an absolutely $G$-irreducible representation $\overline{\rho}$ is a $G$-deformation.
	
	In section \ref{artpseudochar}, we reformulate the theory of \cite[Section 4]{BHKT19} in the language of simplicial deformation.  Our main results are \thmref{aDef} and \thmref{bDef}.  In Section \ref{derpseudochar}, we propose a generalization of this theory for derived deformations.  Unfortunately, the result in this context is only partial, but still instructive.
	
	In Sections \ref{cotangent}, after recalling the definition of the tangent and cotangent complexes and its calculation for a Galois deformation functor, we
introduce a relative version of the cotangent complex. In order to relate the cotangent complex of the universal simplicial ring $\cR$ prorepresenting a deformation functor to a Selmer group, we shall take $\Gamma=G_{F,S}$ for a number field $F$ and for $S$ equal to the set of places above $p$ and $\infty$, and we shall deal with the simplest sort of local conditions, namely unramified outside $p$ and ordinary at each place above $p$. We show that the cotangent complex $L_{\cR/\cO}\otimes_{\cR} T$ is related to the ordinary Galois cochain complex.  Note that here the base $T$ is arbitrary, whereas in \cite{GV18} and \cite{Cai20} it was mostly the case $T=k$.

Finally, in Section \ref{GVmorphism}, we recall how this is used
to define a homomorphism, first constructed in \cite[Lemma 15.1]{GV18} and generalized and studied in \cite{TU20}, which relates the fundamental group of the simplicial ordinary universal deformation ring and the ordinary dual adjoint Selmer group.
	
	This work started during the conference on $p$-adic automorphic forms and Perfectoids held in Bangalore in September 2019.  The authors greatly appreciated the excellent working atmosphere during their stay.

	\section{Classical and simplicial Galois deformation functors}

	\subsection{Classical deformations}
	
	Let $\Gamma$ be a profinite group which satisfies $(\Phi_p)$. When necessary, we view $\Gamma$ as projective limit of finite groups $\Gamma_i$. Let $\Art_{\cO}$ be the category of Artinian local $\cO$-algebras with residue field $k$.  Recall that the framed deformation functor $\cD^{\square}\colon \Art_{\cO} \to \Set$ of $\overline{\rho}$ is defined by associating $A\in\Art_{\cO}$ to the set of continuous liftings $\rho\colon \Gamma\to G(A)$ which make the following diagram commute:
	\begin{displaymath}
	\xymatrix{
		\Gamma \ar[r]^{ \rho }\ar[dr]^{\bar{\rho}} & G(A) \ar[d]\\
		& G(k)
	}
	\end{displaymath}
	Let $Z$ be the center of $G$ over $\cO$. We assume throughout it is a smooth group scheme over $\cO$.  Let $\widehat{G}(A)=\ker(G(A) \to G(k))$, resp. $\widehat{Z}(A)=\ker(Z(A) \to Z(k))$.  Let $\g=\Lie(G/\cO)$, resp. $\z=Lie(Z/\O)$ be the $\cO$-Lie algebra of $G$, resp. $Z$, and let $\g_k=\g\otimes_{\cO} k$, resp. $\z_k=\z\otimes_{\cO} k$.  The universal deformation functor $\cD=\Def_{\overline{\rho}}\colon \Art_{\O} \to \Set$ is defined by associating $A\in\Art_{\O}$ to the set of $\widehat{G}(A)$-conjugacy classes of $\cD^{\square}(A)$.  As an application of Schlessinger's criterion (see \cite[Theorem 2.11]{Sch68}), the functor $\cD^{\square}$ is pro-representable, and when $\bar{\rho}$ satisfies $H^{0}(\Gamma, \g_k) = \z_k$, the functor $\cD$ is pro-representable (see \cite[Theorem 3.3]{Til96}). 
	
	We shall consider (nearly) ordinary deformations.  In this case, we always suppose $\Gamma=G_{F,S}$, where $F$ is a number field and $S=S_p\cup S_\infty$ is the set of places above $p$ and $\infty$.  Note that $\Gamma$ is profinite and satisfies $(\Phi_p)$. For any $v\in S_p$, let $\Gamma_v=\Gal(\overline{F}_v/F_v)$.  Let $B=TN\subset G$ be a Borel subgroup scheme ($T$ is a maximal split torus and $N$ is the unipotent radical of $B$); all these groups are defined over $\cO$.  Let $\Phi$ be the root system associated to $(G,T)$ and $\Phi^+$ the subset of positive roots associated to $(G,B,T)$.  Assume that for any place $v\in S_p$, we have 
	
	$(Ord_v)$ there exists $\overline{g}_v\in G(k)$ such that $\overline{\rho}\vert_{\Gamma_v}$ takes values in $\overline{g}_v^{-1}\cdot B(k)\cdot\overline{g}_v$. 
	
	Let $\overline{\chi}_v\colon \Gamma_v\to T(k)$ be the reduction modulo $N(k)$ of $\overline{g}_v\cdot \overline{\rho}\vert_{\Gamma_v}\cdot\overline{g}_v^{-1}.$  Let $\omega\colon \Gamma_v\to k^\times$ be the mod. $p$ cyclotomic character.  We shall need the following conditions for $v\in S_p$:
	
	$(Reg_v)$ for any $\alpha\in\Phi^+$, $\alpha\circ\overline{\chi}_v\neq 1$,  and
	
	$(Reg^\ast_v)$ for any $\alpha\in\Phi^+$, $\alpha\circ\overline{\chi}_v\neq \omega$. 
	
	We can define the subfunctor $\cD^{\square,\ord}\subset \cD^\square$ of nearly ordinary liftings by the condition that $\rho\in \cD^{\square,\ord}$ if and only if for any place $v\in S_p$ there exists $g_v\in G(A)$  which lifts $\bar{g}_v$ such that $\rho\vert_{\Gamma_v}$ takes values in $g_v^{-1}\cdot B(k)\cdot g_v$. 
	Note that this implies that the homomorphism $\chi_{\rho,v}\colon\Gamma_v\to T(A)$ given by $g_v\cdot \rho\vert_{\Gamma_v}\cdot g_v^{-1}$ lifts $\overline{\chi}_v$.

	Similarly, we define the subfunctor $\cD^{\ord}\subset\cD$ of nearly ordinary deformations by $\cD^{\ord}(A)=\cD^{\square,\ord}(A)/\widehat{G}(A)$.
	
	Recall \cite[Proposition 6.2]{Til96}:
	
	\begin{pro} 
		Assume that $\H^0(\Gamma,\g_k)=\z_k$ and that $(Ord_v)$ and $(Reg_v)$ hold for all places $v\in S_p$.  
Then $\cD^{\ord}$ (and $\cD^{\square,\ord}$) is pro-representable, say by the complete noetherian local $\cO$-algebra $R^{\ord}$.   
	\end{pro}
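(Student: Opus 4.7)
The plan is to verify Schlessinger's criterion in two stages: first for the framed ordinary functor $\cD^{\square,\ord}$, then for the unframed quotient $\cD^{\ord}$. After conjugating the restriction $\bar\rho|_{\Gamma_v}$ by $\bar g_v^{-1}$ (which only changes the deformation problem up to isomorphism), I may assume $\bar\rho|_{\Gamma_v}$ takes values in $B(k)$ for every $v \in S_p$. Then $\rho \in \cD^\square(A)$ is ordinary iff for each $v \in S_p$ there exists $g_v \in \hat G(A)$ with $g_v\rho|_{\Gamma_v}g_v^{-1}$ taking values in $B(A)$. The crucial technical input is the big-cell factorization: since $N^- \times B \to G$ (with $N^-$ the opposite unipotent radical of $B$) is an open immersion containing the identity, one gets $\hat G(A) = \hat N^-(A) \cdot \hat B(A)$ with $\hat N^-(A) \cap \hat B(A) = \{1\}$, so if $g_v$ exists it can be uniquely normalized to lie in $\hat N^-(A)$.

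The regularity hypothesis $(Reg_v)$ is what ensures this normalized $g_v$ behaves well under deformations. Parameterizing $\hat N^-(A) = \prod_{\alpha \in \Phi^+} \hat U_{-\alpha}(A)$ so that $\hat T$ acts on $\hat U_{-\alpha}$ through $-\alpha$, and considering a small surjection $A' \twoheadrightarrow A$ of kernel $I$ with an ordinary $\rho_0$ over $A$ and its normalized $g_{v,0}$, the obstruction to lifting the pair $(\rho_0, g_{v,0})$ to a pair $(\rho, g_v)$ over $A'$ with $g_v \rho|_{\Gamma_v} g_v^{-1}$ still $B$-valued is controlled by $\Gamma_v$-cohomology with coefficients in $(\g/\b)_k \otimes I$, twisted by $-\bar\chi_v$. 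The non-equalities $\alpha \circ \bar\chi_v \neq 1$ provided by $(Reg_v)$ not only control this obstruction but, more importantly, identify the ordinary locus with the common zero-scheme of certain matrix-coefficient equations; combined with Schlessinger's (H1)-(H3) inherited from $\cD^\square$ (checked by gluing $g_v$'s along fiber products, using smoothness of $\hat B$), this proves that $\cD^{\square,\ord}$ is pro-represented by a quotient $R^{\square,\ord}$ of the framed deformation ring.

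To descend to $\cD^{\ord}$, I show that the conjugation action of $\hat G(A)$ on $\cD^{\square,\ord}(A)$ has stabilizer exactly $\hat Z(A)$ at every lift: the hypothesis $\H^0(\Gamma,\g_k)=\z_k$, applied inductively on the length of $A$ via Nakayama, forces $Z_{G(A)}(\rho(\Gamma)) = Z(A)$ for any lift $\rho$, hence the action of $\hat G/\hat Z$ is free. Schlessinger's (H4) for $\cD^{\ord}$ then follows from (H4) for $\cD^{\square,\ord}$ together with this freeness, and $\cD^{\ord}$ is pro-represented by a complete noetherian local $\cO$-algebra $R^\ord$. The main obstacle throughout is the rigidity of the Borel reduction: without $(Reg_v)$, several distinct Borels could contain the image of $\bar\rho|_{\Gamma_v}$, the choice of $g_v \in \hat N^-(A)$ would no longer be unique, and the ordinary condition would fail to cut out a closed subfunctor. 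The regularity hypothesis is precisely what is needed to preserve this rigidity through all Artinian levels.
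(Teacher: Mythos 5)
The paper itself gives no proof of this Proposition --- it simply cites \cite[Proposition 6.2]{Til96}. Your argument reconstructs what is essentially the standard proof found in that reference and its descendants (Schlessinger's criterion, big-cell normalization of the Borel reduction, uniqueness via $(Reg_v)$, and then descent to the unframed functor via the free action of $\widehat{G}/\widehat{Z}$ coming from $\H^0(\Gamma,\g_k)=\z_k$), so the overall approach is the expected one and the structure of the argument is sound.

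One technical point needs fixing. You write the big-cell factorization as $\widehat{G}(A)=\widehat{N}^-(A)\cdot\widehat{B}(A)$ and then say $g_v$ can be normalized to its $\widehat{N}^-$-component $n_v$. But if $g_v = n_v b_v$ with $b_v$ on the \emph{right}, then
\[
n_v\,\rho|_{\Gamma_v}\,n_v^{-1} \;=\; g_v\bigl(b_v^{-1}\rho|_{\Gamma_v} b_v\bigr)g_v^{-1},
\]
which is not contained in $B(A)$ in general, because conjugating $\rho$ by $b_v$ beforehand destroys the ordinarity hypothesis that is only known for the original $\rho$. The operation that does preserve the condition ``$g\rho g^{-1}\subseteq B(A)$'' is \emph{left} multiplication by $\widehat{B}(A)$, since $b(g\rho g^{-1})b^{-1}\subseteq B(A)$ for $b\in\widehat{B}(A)$. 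So one should factor with $\widehat{B}$ on the left, $\widehat{G}(A)=\widehat{B}(A)\cdot\widehat{N}^-(A)$, write $g_v=b_v n_v$, and take $n_v=b_v^{-1}g_v$; then indeed $n_v\rho|_{\Gamma_v}n_v^{-1}=b_v^{-1}(g_v\rho|_{\Gamma_v}g_v^{-1})b_v\subseteq B(A)$. You should also be explicit that uniqueness of this normalized $n_v$ --- and not just its existence --- is what requires $(Reg_v)$: by Artinian induction, any two normalized elements differ by $1+x$ with $x\in\n^-_k\otimes I$, and $(Reg_v)$ kills the $\Gamma_v$-invariants of $(\g/\b)_k\otimes I$ graded piece by graded piece via $\alpha\circ\bar\chi_v\neq 1$. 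You allude to this in the final paragraph, but the opening sentence (``so if $g_v$ exists it can be uniquely normalized'') reads as if the big cell alone forces uniqueness, which it does not. Once these two points are corrected, the proof of (H1)--(H3) for $\cD^{\square,\ord}$ by gluing normalized $n_v$'s over fiber products, and the passage to $\cD^{\ord}$ via freeness of the $\widehat{G}/\widehat{Z}$-action, go through as you describe.
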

	
	Note that the condition $(Reg^\ast_v)$ will occur later in the study of the cotangent complex in terms of the (nearly) ordinary Selmer complex.
	As noted in \cite[Chapter 8]{Til96}, the morphism of functors $\cD^{\ord}\to\prod_{v\in S_p} \Def_{\overline{\chi}_v}$ given by 
$[\rho]\mapsto (\chi_{\rho,v})_{v\in S_p}$ provides a structure of $\Lambda$-algebra on $R^{\ord}$ for an Iwasawa algebra $\Lambda$
called the Hida-Iwasawa algebra.

\begin{rem} A lifting $\rho\colon \Gamma\to G(A)$ of $\bar{\rho}$ is called ordinary of weight $\mu$ if for any $v\in S_p$, after conjugation by $g_v$, the cocharacter 
$\rho\vert_{Iv}\colon I_v\to T(A)=B(A)/N(A)$ is given (via the Artin reciprocity map $rec_v$) by
$\mu\circ rec_v^{-1}\colon I_v\to \cO_{F_v}^\times\to T(A)$. 

If we assume that $\bar{\rho}$ admits a lifting $\rho_0\colon\Gamma\to G(\cO)$ 
which is ordinary of weight $\mu$, we
can also consider the weight $\mu$ ordinary deformation problem, defined as the subfunctor $\cD^{\ord,\mu}\subset \cD^{\ord}$ where we impose the extra condition to $[\rho]$
that for any $v\in S_p$, after conjugation by some $g_v$, $\rho\vert_{Iv}\colon I_v\to T(A)=B(A)/N(A)$ is given (via the Artin reciprocity map $rec_v$) by
$\mu\circ rec_v^{-1}\colon I_v\to \cO_{F_v}^\times\to T(\cO)\to T(A)$. 
This problem is prorerepresentable as well, say by $R^{\ord}_\mu$. The difference is that $R^{\ord}$ has a natural structure of algebra over an Iwasawa algebra, 
while, if $\rho_0$ is automorphic, $R^{\ord}_\mu$ is often proven 
to be a finite $\cO$-algebra (see \cite{Wi95} or \cite{Ge19} for instance).
\end{rem}
	
	These functors have natural simplicial interpretations.

	\subsection{Simplicial reformulation of classical deformations}\label{subsectReform}
	
	In this section, we'll try to introduce the basic notions of simplicial homotopy theory and proceed at the same time to give a simplicial definition of the deformation functor of $\bar{\rho}$.

	Recall that a groupoid is a category such that all homomorphisms between two objects are isomorphisms.  Let $\Gpd$ be the category of small groupoids.  We have a functor $\Gp\to \Gpd$ from the category $\Gp$ of groups to $\Gpd$ sending a group $G$ to the groupoid with one object $\bullet$ and such that $\End(\bullet)=G$.
   
	A model category is a category with three classes of morphisms called weak equivalences, cofibrations and fibrations, satisfying five axioms, see \cite[Definition 7.1.3]{Hir03}.  The category of groups is not a model category.  But it is known (see \cite[Theorem 6.7]{Str00}) that the category of groupoids $\Gpd$ is a model category, where a morphism $f\colon G\to H$ is 
	\begin{enumerate}
		\item a weak equivalence if it is an equivalence of categories;
		\item a cofibration if it is injective on objects;
		\item a fibration if for all $a\in G$, $b\in H$ and $h\colon f(a) \to b$ there exists $g\colon a\to a^{\prime}$ such that $f(a^{\prime}) = b$ and $f(g)=h$.
	\end{enumerate}
	If $\cC$ is a model category, its homotopy category $\Ho(\cC)$ is the localization of $\cC$ at weak equivalences.  It comes with a functor $\cC\to \Ho(\cC)$ universal for the property of sending weak equivalences to isomorphisms.
  
	In $\Gpd$, the empty groupoid is the initial object and the unit groupoid consisting in a unique object with a unique isomorphism is the final object.  In a model category, a fibration, resp. cofibration, over the final object, resp. from the initial object, is called a fibrant, resp. cofibrant object.  Note that every object of $\Gpd$ is both cofibrant and fibrant, and the homotopy category $\Ho(\Gpd)$ is the quotient category of $\Gpd$ modding out natural isomorphisms.  If we regard a group $G$ as a one point groupoid, the functor $\Gp \to \Ho(\Gpd)$ so obtained has the effect of moding out conjugations, so, for any finite group $\Gamma_i$, we have 
	\begin{displaymath}
	\Hom_{\Gp}(\Gamma_{i}, G(A))/G^{ad}(A) \cong \Hom_{\Ho(\Gpd)}(\Gamma_{i}, G(A)).
	\end{displaymath}
	
	To construct the deformation functor, we first need to recall the construction of the classifying simplicial set $BG$ associated to a groupoid $G$.
	
	Let $\bm{\Delta}$ be the category whose objects are sets $[n]=\{0,\ldots,n\}$ and morphisms are non-decreasing maps. It is called the cosimplicial indexing category (see \cite[Definition 15.1.8]{Hir03}).  Given a category $\cC$, the category $s\cC$ of simplicial objects of $\cC$ is the category of contravariant functors $F\colon \bm{\Delta}\to\cC$.  In particular, $\sSet$ is the category of simplicial sets.  For any $n\geq 0$, let $\bm{\Delta}[n]$ be the simplicial set $$[k]\mapsto \Hom_{\bm{\Delta}}([k],[n]).$$  Note that the category $\sSet$ admits enriched homomorphisms: if $X,Y$ are two simplicial sets, there is a natural simplicial set $\sHom(X,Y)$ whose degree zero term is $\Hom_{\sSet}(X,Y)$. Actually, $$\sHom(X,Y)_n=\Hom_{\sSet}(X\times\bm{\Delta}[n],Y).$$
	
	For $X\in\sSet$, the morphism $(d_1,d_0)\colon X_1\to X_0\times X_0$ generates an equivalence relation $\widetilde{X}_1$. The zeroth homotopy set $\pi_0X$ is defined as the quotient set $X_0/\widetilde{X}_1$.  Let $X$ be fibrant and let $x\in X_0$; one can define for $i\geq 1$, the $i$-th homotopy set $\pi_i(X,x)$ as the quotient of the set of pointed morphisms $\Hom_{\sSet_\ast}(\bm{\Delta}[n],X)$ (morphisms sending the boundary $\partial\bm{\Delta}[n]$ to $x$) by the homotopy relation (see \cite[Section 8.3]{Weib94}).  Then $\pi_i(X,x)$ is naturally a group which is abelian when $i\geq2$ (see \cite[Theorem \RNum{1}.7.2]{GJ09}). 
	
	For $X\in \sSet$, let $\bm{\Delta}  X$ be the category whose objects are pairs $(n,\sigma)$ where $n\geq 0$ and $\sigma\colon\bm{\Delta}[n]\to  X$ is a morphism of simplicial sets, and morphisms $(n,\sigma)\to (m,\tau)$ are given by a non-decreasing map $\varphi\colon[n]\to[m]$ such that $\sigma=\tau\circ\varphi$.  The category $\bm{\Delta}  X$ is called the category of simplices of $X$ (see \cite[Definition 15.1.16]{Hir03}). 
	
	The following lemma is well-known:
	
	\begin{lem}
		Suppose $\cC$ is a category admitting colimits; let $F\colon \bm{\Delta} \to \cC$ be a covariant functor.  Let $F_{\ast}\colon \cC \to \sSet$ be the functor which sends $A\in \cC$ to the simplicial set $X=(X_n)_{n\geq 0}$ given by $X_n=\Hom_{\cC}(F([n]), A)$ at $n$-th simplicial degree, and let $F^{\ast}\colon \sSet \to \cC$ be the functor which sends $X \in \sSet$ to $\varinjlim\limits_{(n,\sigma) \in \bm{\Delta}  X} F(\sigma)$.  Then $F^{\ast}$ is left adjoint to $F_{\ast}$.
	\end{lem}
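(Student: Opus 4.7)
The plan is to recognize this lemma as the standard \emph{nerve--realization} adjunction, obtained from $F$ by left Kan extension along the Yoneda embedding $\bm{\Delta}\hookrightarrow\sSet$. The proof relies on two basic facts: that $\Hom_{\cC}(-,A)$ turns colimits in the first variable into limits, and that every simplicial set is canonically a colimit of standard simplices indexed by its category of simplices. From these, the adjunction falls out by rewriting both $\Hom_{\cC}(F^{\ast}X,A)$ and $\Hom_{\sSet}(X,F_{\ast}A)$ as the same limit.

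First I would recall (or establish) the density formula for simplicial sets: for any $X\in\sSet$ the canonical map
\[
\varinjlim_{(n,\sigma)\in\bm{\Delta}X}\bm{\Delta}[n]\;\longrightarrow\;X
\]
is an isomorphism in $\sSet$, where the functor $(n,\sigma)\mapsto\bm{\Delta}[n]$ is the composition of the forgetful functor $\bm{\Delta}X\to\bm{\Delta}$ with the Yoneda embedding. This is proved simplicial degree by simplicial degree: in degree $k$ both sides describe the set of maps $[k]\to[n]$ modulo the relations generated by the non-decreasing maps appearing in the indexing, and by Yoneda this reduces to $X_k$ itself.

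Next I would compute both Hom-sets. On the left,
\[
\Hom_{\cC}(F^{\ast}X,A)
=\Hom_{\cC}\!\Bigl(\varinjlim_{(n,\sigma)\in\bm{\Delta}X}F([n]),A\Bigr)
\cong\varprojlim_{(n,\sigma)\in\bm{\Delta}X}\Hom_{\cC}(F([n]),A)
=\varprojlim_{(n,\sigma)\in\bm{\Delta}X}(F_{\ast}A)_n,
\]
using that $\Hom_{\cC}(-,A)$ sends colimits to limits. On the right, using the density formula and that $\Hom_{\sSet}(-,F_{\ast}A)$ also sends colimits to limits,
\[
\Hom_{\sSet}(X,F_{\ast}A)
\cong\Hom_{\sSet}\!\Bigl(\varinjlim_{(n,\sigma)\in\bm{\Delta}X}\bm{\Delta}[n],\,F_{\ast}A\Bigr)
\cong\varprojlim_{(n,\sigma)\in\bm{\Delta}X}\Hom_{\sSet}(\bm{\Delta}[n],F_{\ast}A),
\]
and the Yoneda lemma identifies $\Hom_{\sSet}(\bm{\Delta}[n],F_{\ast}A)$ with $(F_{\ast}A)_n$. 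Thus both sides are canonically isomorphic to the same limit $\varprojlim_{(n,\sigma)\in\bm{\Delta}X}(F_{\ast}A)_n$, which produces the desired bijection $\Hom_{\cC}(F^{\ast}X,A)\cong\Hom_{\sSet}(X,F_{\ast}A)$.

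Finally I would verify that this bijection is natural in both $X$ and $A$. Naturality in $A$ is immediate because every step is functorial in $A$. Naturality in $X$ requires checking that a morphism $X\to X'$ of simplicial sets induces a map between the two indexing categories $\bm{\Delta}X\to\bm{\Delta}X'$ compatible with the cones defining the colimit and the limit; this is a direct unwinding of definitions. The main (though mild) obstacle is to pin down precisely the density formula $X\cong\varinjlim_{\bm{\Delta}X}\bm{\Delta}[n]$ and to track the naturality through the colimit/limit manipulations; once this is done, no nontrivial computation remains.
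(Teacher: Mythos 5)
Your proposal is correct and follows essentially the same route as the paper: both proofs compute $\Hom_{\cC}(F^{\ast}X,A)$ as $\varprojlim_{\bm{\Delta}X}\Hom_{\cC}(F([n]),A)$ by continuity of the contravariant Hom, identify this term-by-term with $\Hom_{\sSet}(\bm{\Delta}[n],F_{\ast}A)$ via Yoneda, and then invoke the density isomorphism $X\cong\varinjlim_{\bm{\Delta}X}\bm{\Delta}[n]$ (the paper cites \cite[Proposition 15.1.20]{Hir03} for this) to collapse the limit to $\Hom_{\sSet}(X,F_{\ast}A)$. The only cosmetic difference is that you walk from both ends of the chain of isomorphisms to a common middle and add an explicit naturality check, whereas the paper traverses the chain in one direction and leaves naturality implicit.
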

	
	\begin{proof}
		It's clear that $F_{\ast}$ is well-defined, and $F^{\ast}$ is well-defined since every simplicial set morphism $f \colon X \to Y$ induces a functor $\bm{\Delta}  X \to\bm{\Delta}  Y$.  For $X\in \sSet$ and $A\in \cC$, we have
		\begin{align*}
		\Hom_{\cC}(F^{\ast}(X),A) &\cong \varprojlim\limits_{(\Delta[n] \to X) \in (\bm{\Delta}  X)^{\op} } \Hom_{\cC}(F([n]), A) \\ &\cong \varprojlim\limits_{(\Delta[n] \to X) \in (\bm{\Delta}  X)^{\op} } \Hom_{\sSet}(\Delta[n], F_{\ast}(A))   \\ &\cong \Hom_{\sSet}(\varinjlim\limits_{(\Delta[n] \to X) \in \bm{\Delta}  X} \Delta[n], F_{\ast}(A))  \\ &\cong  \Hom_{\sSet}(X, F_{\ast}(A)),
		\end{align*}
		where the last equation follows from \cite[Proposition 15.1.20]{Hir03}.  So $F^{\ast}$ is left adjoint to $F_{\ast}$.
	\end{proof}
	
	\begin{expl}
		\begin{enumerate}
			\item Let $\bm{\Delta} \to \Cat$ be the functor defined by regarding $[n]$ as a posetal category: its objects are $0,1,\ldots n$ and $\Hom_{[n]}(k,\ell)$ has at most one element, and is non-empty if and only if $k\leq \ell$.  We write $P\colon \sSet \to \Cat$ and $B\colon \Cat \to \sSet$ for the associate left adjoint functor and right adjoint functor respectively.  The functor $B$ is called the nerve functor.  The simplicial set $B\cC=(X_n)$ is defined by sets $X_n\subset \mathrm{Ob}(\cC)^{[n]}$ of $(n+1)$-tuples $(C_0,\ldots,C_n)$ of objects of $\cC$ with morphisms $C_k\to C_\ell$ when $k\leq \ell$, which are compatible when $n$ varies; it is a fibrant simplicial set if and only if $\cC\in \Gpd$ (see \cite[Lemma \RNum{1}.3.5]{GJ09}).  In a word, for $B\cC$ to be fibrant, it must have the extension property with respect to inclusions of horns in $\Delta[n]$ ($\forall n\geq 1$).  For $n=2$, it amounts to saying that all homomorphisms in $\cC$ are invertible; for $n>2$, the extension condition is automatic (details in the reference above).  For $\cC\in \Cat$, we have $PB\cC\cong \cC$, so $\Hom_{\Cat}(\cC, \cD) \cong \Hom_{\sSet}(B\cC, B\cD)$ ($\forall \cC,\cD\in \Cat$).  Note that $B(\cC\times [1]) \cong B\cC \times \Delta[1]$ (product is the degreewise product); in consequence, when $\cC\in \Cat$ and $\cD \in \Gpd$, two functors $f,g\colon \cC\to\cD$ are naturally isomorphic if and only if $Bf$ and $Bg$ are homotopic.
			\item As a corollary of (1), we have $\Hom_{\Gpd}(GPX, H) \cong \Hom_{\sSet}(X, BH)$ for $X\in \sSet$ and $H \in \Gpd$, where $GPX$ is the free groupoid associated to $PX$.  We remark that $GPX$ and $\pi_{1}|X|$ (the fundamental groupoid of the geometric realization) are isomorphic in $\Ho(\Gpd)$ (see \cite[Theorem \RNum{3}.1.1]{GJ09}). 
		\end{enumerate}
	\end{expl}
	
	Recall that a functor between two model categories is called right Quillen if it preserves fibrations and trivial fibrations (fibrations which are weak equivalences).
	
	\begin{lem} 
		The nerve functor $B\colon \Gpd \to \sSet$ is fully faithful and takes fibrant values (Kan-valued).  Moreover, it is right Quillen.
	\end{lem}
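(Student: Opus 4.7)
The first two assertions are immediate from the preceding example. Since $\Gpd$ is a full subcategory of $\Cat$, the natural bijection $\Hom_{\Cat}(\cC,\cD)\cong \Hom_{\sSet}(B\cC,B\cD)$ noted there restricts on groupoids to show that $B\colon\Gpd\to\sSet$ is fully faithful, and the example's observation that $B\cC$ is fibrant if and only if $\cC$ is a groupoid is precisely Kan-valuedness of $B$ on $\Gpd$.

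For the right Quillen property, the plan is to verify directly that $B$ preserves fibrations and trivial fibrations, relative to the model structure on $\Gpd$ recalled in the text and the standard Kan--Quillen structure on $\sSet$. Let $f\colon G\to H$ be a fibration of groupoids; I would check that $Bf$ has the right lifting property against each horn inclusion $\Lambda^n_k\hookrightarrow\Delta[n]$. The case $n=1$ is the crucial one: for $\Lambda^1_0$ the problem translates literally into the isofibration condition defining fibrations in $\Gpd$, while for $\Lambda^1_1$ one inverts the given morphism in $H$, lifts via the isofibration condition, and then inverts back in $G$, using the groupoid structure on both sides. For $n=2$, the inner horn $\Lambda^2_1$ is filled by composing the two given morphisms in $G$, and the outer horns $\Lambda^2_0,\Lambda^2_2$ by composing with an inverse inside the groupoid $G$; in each case compatibility with the prescribed $2$-simplex in $BH$ reduces to functoriality of $f$. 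For $n\geq 3$ the extension is automatic and unique because the nerve of any small category is $2$-coskeletal, so an $n$-simplex is determined by its faces once the $1$- and $2$-skeleta are fixed.

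For trivial fibrations, I would first unwind what this means in $\Gpd$: combining the isofibration condition with essential surjectivity forces surjectivity on objects, so a trivial fibration in $\Gpd$ is exactly a functor between groupoids that is surjective on objects and fully faithful. Then $Bf$ has the right lifting property against each boundary inclusion $\partial\Delta[n]\hookrightarrow\Delta[n]$: the case $n=0$ is surjectivity on objects, $n=1$ is fullness, $n=2$ extracts the commuting triangle in $G$ from the prescribed one in $H$ by combining fullness with faithfulness, and $n\geq 3$ is again automatic by $2$-coskeletality. The main obstacle, apart from the $n=1$ horn case, is the combinatorial bookkeeping for the outer horn fillings and the $n=2$ boundary filling, where one has to check that the simplex built by composing with an inverse is genuinely compatible with the data prescribed in $BH$; this uses only functoriality of $f$ together with the groupoid structures of $G$ and $H$, but must be written out carefully so that all face identities line up.
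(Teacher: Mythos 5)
The proposal is correct and closely mirrors the paper's handling of full faithfulness, Kan-valuedness, and the preservation of fibrations (horn lifting, with $n=1$ as the substantive case and $n\geq 2$ automatic). Where you diverge is in the trivial-fibration half of the right Quillen claim: you characterize trivial fibrations of groupoids intrinsically (surjective on objects plus fully faithful) and then verify the lifting property against the boundary inclusions $\partial\Delta[n]\hookrightarrow\Delta[n]$ directly, invoking $2$-coskeletality to dispose of $n\geq 3$. The paper instead short-circuits this step by observing first that $B$ preserves weak equivalences (immediate from the Example's fact that natural isomorphisms of functors into a groupoid correspond to simplicial homotopies), after which preservation of trivial fibrations follows for free from preservation of fibrations, since trivial fibrations are exactly fibrations that are weak equivalences. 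Your route is more self-contained at that step but requires the extra bookkeeping you acknowledge; the paper's is shorter because it reuses an observation already on the table. Both are valid, and your characterization of trivial fibrations in $\Gpd$ and the case-by-case boundary analysis are correct.
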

	
	\begin{proof}
		For the first statement, we know by Example 2.2 that: $\Hom_{\Cat}(\cC, \cD) \cong \Hom_{\sSet}(B\cC, B\cD)$ ($\forall \cC,\cD\in \Cat$, hence the fully faithfulness. Moreover $B\cC$ is fibrant for $\cC$ a groupoid.
		
		For the second statement, note that $B$ obviously preserves weak equivalences; moreover, by definition, $Bf\colon BG\to BH$ is a fibration if and only if it has the right lifting property with respect to inclusions of horns in $\Delta[n]$, $\forall n\geq 1$ (see \cite[page 10]{GJ09}). For $n=1$ this means exactly that $f$ is a fibration, while for $n\geq2$ it's automatic (see the proof of \cite[Lemma \RNum{1}.3.5]{GJ09}).
	\end{proof}
	
	Let $A\in\Art_\cO$. Consider the group $G(A)$ of $A$-points of our reductive group scheme $G$.  Passing to homotopy categories, we get the isomorphism 
	\begin{align*}
	\Hom_{\Ho(\Gpd)}(\Gamma_{i}, G(A)) &\cong \Hom_{\Ho(\sSet)}(B\Gamma_{i}, BG(A))\\
	&\cong \pi_{0}\sHom_{\sSet}(B\Gamma_{i}, BG(A)).
	\end{align*}
	
	Let $X=(B\Gamma_{i})_{i}$ be the pro-simplicial set associated to the profinite group $\Gamma$.  We define $$\Hom_{\sSet}(X, -)=\varinjlim_{i} \Hom_{\sSet}(B\Gamma_{i}, -).$$  Then the Galois representation $\bar{\rho}\colon \Gamma \to G(k)$ gives rise to an element of $\Hom_{\sSet}(X, BG(k))$, which we also denote by $\bar{\rho}$.  In order to take into account the deformations of $\bar{\rho}$, we introduce the overcategory $\cM = \sSet/_{BG(k)}$ of pairs $(Y,\pi)$ where $Y$ is a simplicial set and $\pi\colon Y\to BG(k)$ is a morphism of simplicial sets. The category $\cM$ has a natural simplicial model category structure: the cofibrations, fibrations, weak equivalences and tensor products are those of $\sSet$ (see \cite[Lemma \RNum{2}.2.4]{GJ09} for the only nontrivial part of the statement).  When we consider $X\in \cM$, we specify the morphism $\bar\rho \colon X\to BG(k)$; similarly, when we consider $BG(A) \in \cM$ for $A\in \Art_{\O}$, we specify the natural projection $BG(A)\to BG(k)$.  For $X,Y\in \cM$, we can define an object of $\cM$ of enriched homomorphisms $\sHom_{\cM}(X,Y)$ for which $\sHom_{\cM}(X,Y)_n$ consists in the morphisms $X\times\bm{\Delta}[n]\to Y$ compatible to the projections to $BG(k)$.  Since $BG(A)\to BG(k)$ is a fibration, $BG(A)\in \cM$ is fibrant.  Similar to the discussion of the preceding paragraph, we have 
	\begin{equation}\label{D(A)}
	\cD(A) \cong \Hom_{\Ho(\cM)}(X, BG(A)) \cong \pi_{0}\sHom_{\cM}(X, BG(A))
	\end{equation}
	for $A\in \Art_{\O}$.  Note that $\sHom_{\cM}(X, BG(A))$ is the fiber over $\bar{\rho}$ of the fibration map $$\sHom_{\sSet}(X, BG(A)) \to \sHom_{\sSet}(X, BG(k)),$$ so it actually calculates the homotopy fiber of $\pi_{\bar{\rho}}$ (see \cite[Theorem 13.1.13 and Proposition 13.4.6]{Hir03}).
	
	When $\Gamma=G_{F,S}$, $S=S_p\cup S_\infty$ and $\bar{\rho}$ satisfies $(Ord_v)$ for $v\in S_{p}$, we reformulate the definition of the nearly ordinary deformation subfunctor $\cD^{\ord}\subset\cD$ as follows.  For each $v\in S_p$, we form $\Gamma_v=\varprojlim_i\Gamma_{i,v}$ where $\Gamma_v\to \Gamma$ induces morphisms $\Gamma_{i,v}\to\Gamma_i$ of finite groups.  Let $X_{v}=(B\Gamma_{i,v})_{i}$ be the pro-simplicial set associated.  For the fixed Borel subgroup $B$ of $G$, we have a natural cofibration $BB(A)\subset BG(A)$.  Recall that $\bar{g}_v\cdot 
	\bar\rho\vert_{\Gamma_v}\cdot \bar{g}_v^{-1}$ takes values in $B(k)$.  Let $\cD_v(A)$ be $\pi_0$ of the fiber over $	\bar\rho\vert_{\Gamma_v}$ of the fibration map 
	$$\sHom_{\sSet}(X_v, BG(A)) \to \sHom_{\sSet}(X_v, BG(k)),$$ 
	and let $\cD_v^{\ord}(A)$ be $\pi_0$ of the fiber over $\bar{g}_v\cdot 
	\bar\rho\vert_{\Gamma_v}\cdot \bar{g}_v^{-1}$ of the fibration map 
	$$\sHom_{\sSet}(X_v, BB(A)) \to \sHom_{\sSet}(X_v, BB(k)).$$ 
	Then there is a natural functorial inclusion $i_v$ of $\cD_v^{\ord}(A)$ into $\cD_v(A)$.  Let $\cD_{\loc}(A)=\prod_{v\in S_p}\cD_v(A)$ and $\cD_{\loc}^{\ord}(A)=\prod_{v\in S_p}\cD_v^{\ord}(A).$  There is a natural functorial map $\cD(A)\to \cD_{\loc}(A)$, resp. $\cD_{\loc}^{\ord}(A)\to \cD_{\loc}(A)$, induced by $\rho\mapsto (\rho\vert_{\Gamma_v})_{v\in S_p},$ resp. by $\prod_{v\in S_p} i_v$. 
	
	We define $\cD^{\ord}(A)$ as the fiber product $$\cD^{\ord}(A)=\cD(A)\times_{\cD_{\loc}(A)} \cD_{\loc}^{\ord}(A).$$
	
	\begin{lem}\label{ordinary reinterpretation}
		Suppose $(Reg_v)$ holds for each place $v\in S_p$.  Then the functor $\cD^{\ord}$ is isomorphic to the classical nearly ordinary deformation functor.
	\end{lem}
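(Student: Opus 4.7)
The plan is to construct the natural forgetful map
$\Psi\colon \cD^{\ord}(A) \to \cD^{\ord,\mathrm{cl}}(A)$
(where $\cD^{\ord,\mathrm{cl}}$ denotes the classical nearly ordinary deformation functor) by $([\rho], ([\chi_v])_{v\in S_p}) \mapsto [\rho]$, and to show it is a bijection. Well-definedness and surjectivity are essentially built into the definitions: the fiber-product condition $i_v([\chi_v]) = [\rho\vert_{\Gamma_v}]$ in $\cD_v(A)$ unwinds, after chasing the definition of $i_v$, to the existence of a lift $g_v$ of $\overline{g}_v$ such that $g_v \rho\vert_{\Gamma_v} g_v^{-1}$ is $\widehat{B}(A)$-conjugate to $\chi_v\in B(A)$, and in particular $g_v\rho\vert_{\Gamma_v}g_v^{-1} \in B(A)$, which is exactly classical near-ordinarity; conversely, given a classical nearly ordinary $[\rho]$ one sets $\chi_v := g_v \rho\vert_{\Gamma_v} g_v^{-1}$ and checks that $([\rho], ([\chi_v]))$ lies in the fiber product and maps to $[\rho]$.

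The substantive content is the injectivity of $\Psi$. If $[\rho]=[\rho']$ in $\cD(A)$ then $i_v([\chi_v]) = i_v([\chi'_v])$ in $\cD_v(A)$, so the injectivity of $\Psi$ reduces to the injectivity of each $i_v\colon \cD_v^{\ord}(A)\to\cD_v(A)$. Unwinding definitions (and, to simplify bookkeeping, conjugating everything locally by a lift of $\overline{g}_v$ so as to reduce to $\overline{g}_v=1$), this amounts to the following rigidity statement, which is where $(Reg_v)$ enters: if $\chi,\chi'\colon \Gamma_v\to B(A)$ are two lifts of $\overline{\rho}\vert_{\Gamma_v}$ and $h\in\widehat{G}(A)$ satisfies $h\chi h^{-1}=\chi'$, then $h\in\widehat{B}(A)$ (so $\chi$ and $\chi'$ are already $\widehat{B}(A)$-conjugate).

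To prove this rigidity I would use the open big cell $N^-\cdot B\subset G$ (with $N^-$ the unipotent radical opposite to $B$): since $h$ lifts the identity, which lies in this open subscheme, there is a unique factorization $h=ub$ with $u\in\widehat{N^-}(A)$ and $b\in\widehat{B}(A)$; after replacing $\chi$ by $b\chi b^{-1}$, still a $B(A)$-lift in the same $\widehat{B}(A)$-class, the problem reduces to showing $u=1$. Writing $u=\exp X$ with $X\in\m_A\otimes\n^-$ and decomposing $X=\sum_{\alpha\in\Phi^+}X_{-\alpha}$ along the root spaces of $\n^-$, the condition $u\chi(\gamma) u^{-1}\in B(A)$, projected onto the $\n^-$-component and applied to the $T$-projection of $\chi(\gamma)$, yields at each order of the $\m_A$-adic filtration the linear constraint $(1-\alpha^{-1}(\overline{\chi}_v(\gamma)))\,X_{-\alpha}\equiv 0$ modulo the next power of $\m_A$, for every $\alpha\in\Phi^+$ and every $\gamma\in\Gamma_v$. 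By $(Reg_v)$, for each such $\alpha$ one can choose $\gamma$ making the scalar $1-\alpha^{-1}(\overline{\chi}_v(\gamma))$ a unit of $A$; this forces $X_{-\alpha}\equiv 0$, and a descending induction on the nilpotency of $\m_A$ concludes $u=1$. The main obstacle I expect is the clean organization of this inductive linearization, so that the non-abelian structure of $N^-$ and the unipotent component of $\chi$ do not interfere with the diagonal calculation that supplies the rigidity; this is also where the smoothness of $Z$ (and the resulting good root-space decomposition over $\cO$) will implicitly be used to ensure the exponential parametrization of $\widehat{N^-}(A)$ works integrally.
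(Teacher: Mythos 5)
The paper gives no argument here beyond a pointer to \cite{Cai20} and \cite{TU20}, so your proof supplies the missing content, and it is correct. The reduction of the whole statement to injectivity of each $i_v$ (well-definedness and surjectivity being formal from the fiber-product definition) and the identification of $(Reg_v)$ as the hypothesis powering that injectivity are exactly right, and the big-cell factorization $h=ub$ is legitimate because any $h\in\widehat{G}(A)$ automatically lands in the open big cell when $A$ is Artinian local. Two small refinements. You do not need an exponential, nor the smoothness of $Z$, to handle $\widehat{N^-}(A)$: an induction on the ideal filtration of $A$ together with the $\cO$-root-space decomposition of $\g$ (available because $G$ is split reductive over $\cO$) suffices. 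And the bookkeeping you flag is cleanly organized by filtering $\n^-\cong\g/\b$ by root height: $\Ad(\chi(\gamma))$ preserves this filtration, its unipotent part acts trivially on the graded pieces, so on each graded piece the condition reduces literally to $(1-\alpha^{-1}(\overline{\chi}_v(\gamma)))X_{-\alpha}=0$, and $(Reg_v)$ makes the scalar a unit for a suitable $\gamma$. An induction on height nested inside the induction on the length of $A$ then gives $u=1$, hence $h\in\widehat{B}(A)$, as you claimed.
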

	
	\begin{proof}
		It follows easily from what precedes. See \cite{Cai20} or \cite{TU20}.
	\end{proof}

	\subsection{Simplicial reformulation of classical framed deformations}
	
	Let $\Gpd_{\ast}$ and $\sSet_{\ast}$ be the model categories of based groupoids and based simplicial sets (in other words, under categories ${}_{\ast} \backslash \Gpd$ and ${}_{\ast} \backslash \sSet$) respectively.  Then we have 
	\begin{displaymath}
	\Hom_{\Gp}(\Gamma_{i}, G(A)) \cong \Hom_{\Ho(\Gpd_{\ast})}(\Gamma_{i}, G(A)).
	\end{displaymath}
	Let $\cM_{\ast}$ be the over and under category ${}_{\ast} \backslash \sSet /_{BG(k)}$.  Note that $X$ and $BG(A)$ for $A\in \Alg_{\O}$ are naturally objects of $\cM_{\ast}$.  Proceeding as the unframed case, we see that 
	\begin{equation}\label{D(A)frame}
	\cD^{\square}(A) \cong \Hom_{\Ho(\cM_{\ast})}(X, BG(A)) \cong \pi_{0}\sHom_{\cM_{\ast}}(X, BG(A)).
	\end{equation}
	We remark that $\sHom_{\cM_{\ast}}(X, BG(A))$ is weakly equivalent to $\hofib_{\ast}(\sHom_{\cM}(X, BG(A)) \to \sHom_{\cM}(\ast, BG(A)))$, since $\sHom_{\cM}(X, BG(A)) \to \sHom_{\cM}(\ast, BG(A))$ is a fibration.

	\subsection{Derived deformation functors}
	
	We have defined the functor $\sHom_{\cM}(X, BG(-))$ from $\Art_{\O}$ to $\sSet$.  Our next goal is to extend this functor to simplicial Artinian $\O$-algebras over $k$, which we define below.
	
	Let $\sCR$ be the category of simplicial commutative rings (these are simplicial sets which are rings in all degrees and for which all face and degeneracy maps are ring homomorphisms).  A usual commutative ring $A$ can be regarded as an element of $\sCR$, which consists of $A$ on each simplicial degree with identity face and degeneracy maps.  In this way we regard $\O$ and $k$ as objects of $\sCR$.  With the natural reduction map $\O\to k$, the over and under category ${}_{\O} \backslash \sCR /_{k}$ has a simplicial model category stucture, such that the cofibrations, fibrations and weak equivalences are those of $\sCR$, and the tensor product of $A\in {}_{\O} \backslash \sCR /_{k}$ and $K\in \sSet$ is the pushout of $\O \leftarrow \O\otimes K \rightarrow A\otimes K$.  Note that degreewise surjective morphisms $A\to B$ are fibrations. 
	
	Since $\sCR$ is cofibrantly generated, any $A \in {}_{\O} \backslash\sCR$ admits a functorial cofibrant replacement $c(A)$: $$\O \hookrightarrow c(A) \stackrel{\sim}{\twoheadrightarrow} A.$$  Concretely, for any $n\geq 0$ the $\cO$-algebra	$c(A)_n$ is a suitable polynomial $\cO$-algebra mapping surjectively onto $A_n$.  The key property of the cofibrant replacement is that 
	
	-$c(A)$ is a cofibrant object and 
	
	-$c(A)\to A$ is a trivial fibration (a fibration which is a weak equivalence).
	
	Note that the functor $B\mapsto \sHom (c(A),B)$ commutes to weak equivalence (this is called homotopy invariance), while it is not necessarily the case of the functor $B\mapsto \sHom (A,B)$. 
	
	For $A\in {}_{\O} \backslash \sCR$, for any $i\geq 0$, $\pi_iA$ is a commutative group and $\bigoplus_i\pi_iA$ is naturally a graded $\cO$-algebra, hence a $\pi_0 A$-algebra (see \cite[Lemma 8.3.2]{Gil13}). 	
	
	\begin{de}
		The simplicial Artinian $\O$-algebras over $k$, which we denote by $\sArt$, is the full subcategory of $ {}_{\O} \backslash \sCR /_{k}$ consisting of objects $A \in {}_{\O} \backslash \sCR /_{k}$ such that:
		\begin{enumerate}
			\item $\pi_{0}A$ is Artinian local in the usual sense.
			\item $\pi_{\ast}A = \oplus_{i\geq 0}\pi_{i}A$ is finitely generated as a module over $\pi_{0}A$. 
		\end{enumerate}
	\end{de}
	
	Note that $\sArt$ is not a model category, and cofibrations, fibrations and weak equivalences in $\sArt$ are used to indicate those in ${}_{\O} \backslash \sCR /_{k}$.  Nevertheless, $\sArt$ is closed under weak equivalences since the definition only involves homotopy groups.  We also remark that every $A\in \sArt$ is fibrant since $A \to k$ is degreewise surjective.
	
	We define $\O_{N_{\bullet}G} \in \Alg_{\O}^{\bm{\Delta}}$ (i.e., a functor $\bm{\Delta}\to \Alg_{\O}$, also called a cosimplicial object in $\Alg_{\O}$) as follows: in codegree $p$ we have $\O_{N_{p}G} = \O_{G}^{\otimes p}$, and the coface and codegeneracy maps are induced from the comultiplication and the coidentity of the Hopf algebra $\O_{G}$ respectively.  Then for $A\in \Alg_{\O}$, the nerve $BG(A)$ is nothing but $\Hom_{\Alg_{\O}}(\O_{N_{\bullet}G}, A)$, with face and degeneracy maps induced by the coface and codegeneracy maps in $\O_{N_{\bullet}G}$.  When $A\in {}_{\O} \backslash \sCR$, the na\"\i ve analogy is the diagonal of the bisimplicial set $([p],[q]) \mapsto \Hom_{\Alg_{\O}}(\O_{N_{p}G}, A_{q})$ (recall that the diagonal of a bisimplicial set is a simplicial set model for its geometric realization).  However, we need to make some modifications using cofibrant replacements to ensure the homotopy invariance.
	
	\begin{de}
		\begin{enumerate}
			\item For $A\in {}_{\O} \backslash \sCR$, we define $\Bi(A)$ to be the bisimplicial set $$([p],[q])\mapsto  \Hom_{{}_{\O} \backslash \sCR}(c(\O_{N_{p}G}),A^{\Delta[q]}),$$ with face and degeneracy maps induced by the coface and codegeneracy maps in $\O_{N_{\bullet}G}$ and the face and degeneracy maps in $A^{\Delta[\bullet]}$.
			\item The diagonal of $\Bi(A)$, which is denoted by $\diag \Bi(A)$, is the simplicial set induced from the diagonal embedding $\bm{\Delta}^{\op} \to \bm{\Delta}^{\op} \times \bm{\Delta}^{\op} \xrightarrow{\Bi(A)} \Set$.  
		\end{enumerate}
	\end{de}
	
	When $A$ is an $\O$-algebra regarded as a constant object in ${}_{\O} \backslash \sCR$, we have $$\Bi(A)_{p,q} = \Hom_{{}_{\O} \backslash \sCR}(c(\O_{N_{p}G}),A^{\Delta[q]}) \cong \Hom_{\Alg_{\O}}(\O_{N_{p}G},A),$$ where the latter isomorphism is because the constant embedding functor is right adjoint to $\pi_{0}\colon {}_{\O} \backslash \sCR \to \Alg_{\O}$.  Hence $\Bi(A)$ is just a disjoint union of copies of $BG(A)$ in index $q$.  In particular, for $A\in \sArt$ there is a natural map $\Bi(A)_{\bullet,q} \to BG(k)$ for each $q\geq 0$, so we may regard $\Bi(A) \in \cM^{\bm{\Delta}^{\op}}$ via the association $[q]\mapsto \Bi(A)_{\bullet, q}$ (recall that $\cM$ is the overcategory $\sSet/_{BG(k)}$), and $\diag \Bi(A)$ is an object of $\cM$).  Recall that any morphism $X\to Y$ in $\sSet$ admits a functorial factorisation $$X \stackrel{\sim}{\hookrightarrow} \widetilde{X} \twoheadrightarrow Y$$ into a trivial cofibration and a fibration.
	
	\begin{de}\label{BG}
		For $A\in \sArt$, the simplicial set $\BG(A)$ is defined by the functorial trivial cofibration-fibration factorization $\diag \Bi(A) \stackrel{\sim}{\hookrightarrow} \BG(A) \twoheadrightarrow BG(k)$.
	\end{de}
	
	It's clear that $\BG\colon \sArt \to \cM$ defines a functor.  If $A\in \Art_{\O}$ is regarded as a constant simplicial ring, then $\diag \Bi(A) = BG(A) \twoheadrightarrow BG(k)$ is a fibration, so $BG(A)$ is a strong deformation retract of $\BG(A)$ in $\cM$ (see \cite[Definition 7.6.10]{Hir03}).  In particular, these two are indistinguishable in our applications.
	
	\begin{rem}
		Our $\BG(A)$ is weakly equivalent to the simplicial set $\Ex^{\infty}\diag \Bi(A)$ which is the definition chosen in \cite[Definition 5.1]{GV18}.  There is a slight difference: we want to emphasize the fibration $\BG(A) \twoheadrightarrow BG(k)$, so that it's more convenient to handle the homotopy pullbacks.
	\end{rem}
	
	As mentioned above, the reason for taking cofibrant replacements is:
	
	\begin{lem}
		If $A \to B$ is a weak equivalence, then so is $\BG(A) \to \BG(B)$.
	\end{lem}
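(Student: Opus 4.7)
The plan is to propagate the weak equivalence $A \to B$ through the three constructions that make up $\BG$: the enriched Hom out of cofibrant replacements (which fills the $q$-direction of the bisimplicial set $\Bi$), the diagonal functor $\diag$, and finally the functorial factorization in $\cM$.

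First I would unwind the definition to identify the $p$-th column of $\Bi(A)$ as an enriched mapping space. By definition, $[q] \mapsto \Bi(A)_{p,q} = \Hom_{{}_\O \backslash \sCR}(c(\O_{N_p G}), A^{\Delta[q]})$ is exactly the simplicial set $\sHom_{{}_\O \backslash \sCR}(c(\O_{N_p G}), A)$. The key point is that $c(\O_{N_p G})$ is cofibrant in ${}_\O \backslash \sCR$ by construction of the cofibrant replacement, and that every object of $\sArt$ is fibrant (as noted in the excerpt, because $A \to k$ is degreewise surjective). Consequently, the right Quillen functor $\sHom_{{}_\O \backslash \sCR}(c(\O_{N_p G}), -)$ preserves weak equivalences between fibrant objects (this is exactly the homotopy invariance that motivated the introduction of $c(-)$). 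Applying this to each integer $p \geq 0$, the map $\Bi(A)_{p,\bullet} \to \Bi(B)_{p,\bullet}$ is a weak equivalence of simplicial sets for every $p$.

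Next I would pass from columnwise weak equivalences to a weak equivalence of diagonals. This is the classical diagonal lemma for bisimplicial sets (see \cite[Proposition IV.1.9]{GJ09}): a map of bisimplicial sets that is a weak equivalence in each column $\bullet \mapsto Z_{p,\bullet}$ induces a weak equivalence on diagonals. Applying this to $\Bi(A) \to \Bi(B)$ yields that $\diag \Bi(A) \to \diag \Bi(B)$ is a weak equivalence of simplicial sets.

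Finally I would conclude by invoking functoriality of the factorization defining $\BG$. The trivial cofibration--fibration factorization
\[
\diag \Bi(A) \stackrel{\sim}{\hookrightarrow} \BG(A) \twoheadrightarrow BG(k)
\]
is functorial in $A$, so the weak equivalence $\diag \Bi(A) \to \diag \Bi(B)$ fits into a commutative square with the weak equivalences $\diag \Bi(A) \hookrightarrow \BG(A)$ and $\diag \Bi(B) \hookrightarrow \BG(B)$. The 2-out-of-3 property for weak equivalences in $\sSet$ then forces $\BG(A) \to \BG(B)$ to be a weak equivalence, which is the desired conclusion in $\cM$ (weak equivalences there being detected in $\sSet$).

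The main technical point is really the first step: one has to be sure that the cofibrancy of $c(\O_{N_p G})$ together with the fibrancy of every object of $\sArt$ is enough to get homotopy invariance of the enriched Hom, and this is precisely why $c(-)$ appears in the definition of $\Bi$ rather than $\O_{N_p G}$ itself. Once this homotopy invariance is in hand, the remaining two steps are formal consequences of the diagonal lemma and of functorial factorizations in the model category $\cM$.
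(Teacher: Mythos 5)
Your proof is correct and follows essentially the same three-step structure as the paper's: columnwise homotopy invariance of $\sHom(c(\O_{N_pG}),-)$ via cofibrancy, then the bisimplicial diagonal lemma (you cite Goerss--Jardine where the paper cites Hirschhorn Theorem 15.11.11, but these are the same fact), then functoriality of the factorization plus 2-out-of-3. You have merely spelled out the reasons behind each step that the paper's terse proof leaves implicit.
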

	
	\begin{proof}
		If $A \to B$ is a weak equivalence, then $\sHom_{{}_{\O} \backslash \sCR}(c(\O_{N_{p}G}),A) \to \sHom_{{}_{\O} \backslash \sCR}(c(\O_{N_{p}G}),B)$ is a weak equivalence for each $p\geq 0$, so is $\diag \Bi(A) \to \diag \Bi(B)$ (see \cite[Theorem 15.11.11]{Hir03}), and so is $\BG(A) \to \BG(B)$.
	\end{proof}
	
	\begin{de}
		\begin{enumerate}
			\item The derived universal deformation functor $s\cD\colon \sArt \to \sSet$ is defined by 
			$$s\cD(A)=\sHom_{\cM}(X, \BG(A)). $$
			\item The derived universal framed deformation functor $s\cD^{\square} \colon \sArt \to \sSet$ is defined by 
			$$s\cD^{\square}(A)=\hofib_{\ast} (s\cD(A) \to \sHom_{\cM}(\ast, \BG(A))). $$
		\end{enumerate}
	\end{de}
	
	\begin{rem}
		In \cite[Definition 5.4]{GV18}, the derived universal deformation functor is defined by $$s\cD(A) = \hofib_{\bar{\rho}}(\sHom_{\sSet}(X, \Ex^{\infty}\diag \Bi(A)) \to \sHom_{\sSet}(X, BG(k))).$$  Since $\Ex^{\infty}\diag \Bi(A))$ and $\BG(A)$ are weakly equivalent fibrant simplicial sets, $\sHom_{\sSet}(X, \Ex^{\infty}\diag \Bi(A))$ is weakly equivalent to $\sHom_{\sSet}(X, \BG(A))$.  But $\sHom_{\sSet}(X, \BG(A)) \to \sHom_{\sSet}(X, BG(k))$ is a fibration, so $\sHom_{\cM}(X, \BG(A))$ is weakly equivalent to the homotopy fiber.
	\end{rem}
	
	When $\Gamma=G_{F,S}$, $S=S_p\cup S_\infty$ and $\bar{\rho}$ satisfies $(Ord_v)$ for $v\in S_{p}$, we can define for each $v\in S_p$ a functor $s\cD_v\colon\sArt\to \sSet$ as $A\mapsto \sHom_{\sSet/_{BG(k)}}(X_{v}, \BG(A))$, and a functor $s\cD_v^{\ord}\colon\sArt\to \sSet$ as $A\mapsto \sHom_{\sSet/_{BB(k)}}(X_{v}, \BB(A))$.  Let $s\cD_{\loc}=\prod_{v\in S_p}s\cD_v$ and let $s\cD_{\loc}^{\ord}=\prod_{v\in S_p}s\cD_v^{\ord}$.  Define $s\cD^{\ord}$ as the homotopy fiber product $$s\cD^{\ord}=s\cD\times^h_{s\cD_{\loc}}s\cD_{\loc}^{\ord}. $$
	
	\begin{de}
		Let $\cF\colon \sArt \to \sSet$ be a functor.  We say $\cF$ is formally cohesive if it satisfies the following conditions:
		\begin{enumerate}
			\item $\cF$ is homotopy invariant ({\it i.e.} preserves weak equivalences).
			\item Suppose that 
			\begin{displaymath}
			\xymatrix{
				A \ar[r]\ar[d] & B \ar[d] \\
				C \ar[r] & D}
			\end{displaymath}
			is a homotopy pullback square with at least one of $B \to D$ and $C \to D$ degreewise surjective, then
			\begin{displaymath}
			\xymatrix{
				\cF(A) \ar[r]\ar[d] & \cF(B) \ar[d] \\
				\cF(C) \ar[r] & \cF(D)}
			\end{displaymath}
			is a homotopy pullback square.
			\item $\cF(k)$ is contractible.
		\end{enumerate}
	\end{de}
	
	We summarize our preceding discussions:
	
	\begin{pro}
		The functors $s\cD$, $s\cD^{\square}$, $s\cD_{v}^{?}$ (here $?=\emptyset$ or $\ord$) and $s\cD^{\ord}$ are all formally cohesive.  
	\end{pro}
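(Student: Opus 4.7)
The plan is to verify the three conditions in the definition of formal cohesion for the core functors $s\cD$, $s\cD_v$ and $s\cD_v^{\ord}$, and then to derive them for $s\cD^\square$ and $s\cD^{\ord}$ from the general fact that formal cohesion is preserved under homotopy fibers and homotopy fiber products. The argument splits naturally into two layers: first, I would establish a "cohesion" property for the building blocks $\BG$ (resp.\ $\BB$) viewed as functors $\sArt \to \cM$; second, I would apply $\sHom_\cM(X, -)$ and the filtered colimit in $X=(B\Gamma_i)_i$ to obtain the stated functors.

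For the first layer, I would prove that $\BG$ (i) preserves weak equivalences (already established in the lemma above), (ii) sends a homotopy pullback square in $\sArt$ with one degreewise surjective leg to a homotopy pullback square in $\cM$, and (iii) satisfies $\BG(k) \simeq BG(k)$ in $\cM$. Point (iii) is immediate since $\diag \Bi(k) = BG(k)$ and the functorial factorisation yields a trivial cofibration $BG(k) \xrightarrow{\sim} \BG(k)$ over $BG(k)$. Point (ii) is the technical heart: by \cite[Theorem 15.11.11]{Hir03}, a pointwise weak equivalence (resp.\ pointwise homotopy pullback) of bisimplicial sets becomes one after taking the diagonal, so it suffices to check that, for each fixed $p \geq 0$, the functor
\begin{displaymath}
A \mapsto \sHom_{{}_{\O} \backslash \sCR}(c(\O_{N_p G}), A)
\end{displaymath}
preserves homotopy pullbacks along fibrations. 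This holds because $c(\O_{N_p G})$ is cofibrant and degreewise surjections are fibrations in ${}_{\O} \backslash \sCR$, so the claim follows from the simplicial model-category structure (the $\sHom$ is a right Quillen bifunctor in its second argument). The same argument with $B$ in place of $G$ yields the analogous statement for $\BB$.

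For the second layer, applying $\sHom_\cM(X, -)$ transports the three properties to $s\cD$: since $X$ is cofibrant in $\cM$ and each $\BG(A) \twoheadrightarrow BG(k)$ is a fibration with fibrant total space, $\sHom_\cM(X, -)$ preserves weak equivalences and homotopy pullbacks along fibrations, and the filtered colimit over $i$ commutes with both constructions. Contractibility of $s\cD(k) \cong \sHom_\cM(X, BG(k))$ follows because any morphism $X \to BG(k)$ in $\cM = \sSet/_{BG(k)}$ must coincide with the structure map $\bar{\rho}$ of $X$, hence the simplicial set is a point. The same reasoning gives formal cohesion of $s\cD_v$ and, using $\BB$, of $s\cD_v^{\ord}$. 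For $s\cD^\square(A) = \hofib_\ast(s\cD(A) \to \sHom_\cM(\ast, \BG(A)))$ and for $s\cD^{\ord} = s\cD \times^h_{s\cD_\loc} s\cD_\loc^{\ord}$, formal cohesion is inherited from the constituent functors because all three defining clauses are stable under homotopy fibers and homotopy fiber products (contractibility at $k$ becomes a homotopy pullback of contractible objects, still contractible). The main obstacle I anticipate is the bisimplicial reduction in point (ii): verifying that degreewise surjectivity survives the power construction $A \mapsto A^{\Delta[q]}$ (a levelwise check) and correctly feeds into the diagonal of $\Bi(A)$ via a Reedy or $\pi_\ast$-Kan condition is the place where careful model-categorical bookkeeping is needed. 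Once that is in hand, everything else is a formal consequence of the framework set up above.
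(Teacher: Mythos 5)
Your two-layer decomposition mirrors the paper's: first establish the cohesion properties at the level of $\BG$ (and $\BB$) as functors $\sArt \to \cM$, then transport them through $\sHom_\cM(X,-)$ and the filtered colimit, and derive $s\cD^\square$ and $s\cD^{\ord}$ from stability under homotopy fibers and homotopy fiber products. The weak-equivalence part, the contractibility at $k$, the right-Quillen argument for $\sHom_\cM(X,-)$, and the inheritance arguments for $s\cD^\square$ and $s\cD^{\ord}$ all match the paper's proof.

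There is, however, a genuine gap in your treatment of condition (2), the homotopy-pullback preservation, and it lies exactly at the step you flag at the end. You claim that by \cite[Theorem 15.11.11]{Hir03} ``a pointwise weak equivalence (resp.\ pointwise homotopy pullback) of bisimplicial sets becomes one after taking the diagonal.'' That theorem covers \emph{only} the weak-equivalence assertion; it says nothing about homotopy pullbacks, and the analogous statement for homotopy pullbacks is \emph{false} in general. The diagonal of a levelwise homotopy fiber sequence of bisimplicial sets need not be a homotopy fiber sequence; one needs additional hypotheses such as the $\pi_\ast$-Kan condition of Bousfield--Friedlander. So the reduction ``it suffices to check for each fixed $p$ that $A \mapsto \sHom_{{}_{\O}\backslash\sCR}(c(\O_{N_pG}),A)$ preserves homotopy pullbacks'' is not justified by the cited theorem, and the rest of your argument for (2) floats on this unproved claim. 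You correctly anticipate the obstacle (``via a Reedy or $\pi_\ast$-Kan condition\ldots is the place where careful model-categorical bookkeeping is needed''), but you don't close it.

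The paper avoids this by a different reduction: it invokes \cite[Lemma 4.31]{GV18}, which replaces the global homotopy-pullback check on $\BG$ by two more manageable conditions, namely (a) that the loop-space functor $\Omega\BG$ preserves homotopy pullbacks, which follows from \cite[Lemma 5.2]{GV18}, and (b) that $\pi_1\BG(C) \to \pi_1\BG(D)$ is surjective whenever $C \to D$ is degreewise surjective, which is \cite[Corollary 5.3]{GV18}. This is precisely the device that sidesteps the diagonal-of-bisimplicial-sets subtlety. To complete your argument you would either need to verify a $\pi_\ast$-Kan-type condition for $\Bi(-)$ directly, or switch to the $\Omega\BG$/$\pi_1$-surjectivity criterion of \cite{GV18}.
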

	
	\begin{proof}
		We first verify three conditions in the above definition for $s\cD$:
		\begin{enumerate}
			\item If $A \to B$ is a weak equivalence, then $\BG(A)\to \BG(B)$ is a weak equivalence between fibrant objects in $\cM$, so $\sHom_{\cM}(X, \BG(A)) \to \sHom_{\cM}(X, \BG(B))$ is also a weak equivalence.
			\item First we show that 
			\begin{displaymath}
			\xymatrix{
				\BG(A) \ar[r]\ar[d] & \BG(B) \ar[d] \\
				\BG(C) \ar[r] & \BG(D)}
			\end{displaymath}
			is a homotopy pullback square in $\cM$.  Note that regarding the above diagram as a diagram in $\sSet$ doesn't affect the homotopy pullback nature.  By \cite[Lemma 4.31]{GV18}, it suffices to check:
			\begin{enumerate}
				\item the functor $\Omega\BG\colon \sArt \to \sSet$ preserves homotopy pullbacks, and 
				\item $\pi_{1}\BG(C) \to \pi_{1}\BG(D)$ is surjective whenever $C \to D$ is degreewise surjective.
			\end{enumerate}
			Part (a) follows from \cite[Lemma 5.2]{GV18}, and part (b) follows from \cite[Corollary 5.3]{GV18}.
			
			Since small filtered colimits of simplicial sets preserve homotopy pullbacks, we may suppose the pro-object $X$ lies in $\cM$.  Then $\sHom_{\cM}(X, -) \colon \cM \to \sSet$ is a right Quillen functor, hence its right derived functor commutes with homotopy pullbacks in the homotopy categories.  But we are dealing with fibrant objects, so in the homotopy category $\sHom_{\cM}(X, -)$ is isomorphic to its right derived functor.  The conclusion follows.
			\item It's clear that $s\cD(k)$ is contractible.
		\end{enumerate}
		
		The same argument applies for $A \to \sHom_{\cM}(\ast, \BG(A))$.  So $s\cD^{\square}$ is formally cohesive because it is the homotopy pullback of formally cohesive functors.
		
		In the nearly ordinary case, we may replace $X$ by $X_{v}$ and replace $G$ by $B$ and the same argument applies.  Hence $s\cD_{v}^{?}$  ($?=\emptyset$ or $\ord$) is formally cohesive.  Since $s\cD^{\ord}$ is the homotopy limits of formally cohesive functors, it is also formally cohesive.
	\end{proof}

	\subsubsection{Modifying the center}\label{modifying the center}
	
	None of these functors cannot be pro-representable unless $G$ is of adjoint type.  If $G$ has a non trivial center $Z$, we need a variant $s\cD_Z$, resp. $s\cD_Z^{\ord}$, of the functor $s\cD$, resp. of $s\cD^{\ord}$,  in order to allow pro-representability. For this modification, we follow \cite[Section 5.4]{GV18}.  For a classical ring $A\in \Art$, we have a short exact sequence $$1\to Z(A)\to G(A)\to PG(A)\to 1.$$  It yields a fibration sequence $BG(A)\to BPG(A)\to B^2Z(A)$.  Indeed, given a simplicial group $H$ and a simplicial sets $X$ with a left $H$-action, we can form the bar construction $N_\bullet(\ast,H,X)$ at each simplicial degree (see \cite[Example 3.2.4]{Gil13}), which gives the bisimplicial set $([p],[q]) \mapsto  H_p^q\times X_p=: N_q (\ast, H_p, X_p)$.  Consider the action $Z(A)\times G(A)\to G(A)$, and the corresponding simplicial action $N_pZ(A)\times N_pG(A)\to N_pG(A)$ (note that $N_\bullet Z(A)$ is a simplicial group because $Z(A)$ is abelian).  We identify for each $p\geq 0$, $$BG(A)_p=N_p(\ast,\ast,N_pG(A)),$$ $$BPG(A)_p=N_p(\ast,N_pZ(A),N_pG(A)),$$ and we put
	$$B^2Z(A)_p=N_p(\ast,N_pZ(A),\ast)$$ (with diagonal face and degeneracy maps).  The desired fibration is given by the canonical morphisms of simplicial sets which in degree $p$ are: 
	$$N_p(\ast,\ast,N_pG(A))\to N_p(\ast,N_pZ(A),N_pG(A))\to N_p(\ast,N_pZ(A),\ast).$$  
	
	Let us generalize this to $A\in \sArt$.  For this, we note first that $BPG(A)$ can also be defined as the functorial fibrant replacement of $\diag (N)$ where $N$ is the trisimplicial set associated to $(p,q,r)\mapsto N_q(\ast,N_pZ(A_r),N_p(G(A_r))$ (replacing $\cO_{NpG(A_r)}$ by its functorial cofibrant replacement as above). 

	Then, we define  $B^2Z(A)$ as the functorial fibrant replacement of $\diag (N^\prime)$ where $N^\prime$ is the trisimplicial set associated to $(p,q,r)\mapsto N_q(\ast,N_pZ(A_r),\ast)$ (replacing $\cO_{NpG(A_r)}$ by its functorial cofibrant replacement as above).  The obvious system of maps $N_q(\ast,N_pZ(A_r),N_pG(A_r))\to N_q(\ast,N_pZ(A_r),\ast)$ gives the desired map $$\mathcal{B}PG(A)\to B^2Z(A).$$

	The functor $s\cD_Z \colon \sArt \to \sSet$ is defined by the homotopy pullback square (here for simplicity we use $\cM$, but the base maps are those induced from $\BG(k) \to \mathcal{B}PG(k) \to B^2Z(k)$)
	\begin{displaymath}
	\xymatrix{
		s\cD_Z(A) \ar[r]\ar[d] & \sHom_{\cM}(\ast,B^2Z(A)) \ar[d] \\
		\sHom_{\cM}(X,\mathcal{B}PG(A)) \ar[r] & \Hom_{\cM}(X,B^2Z(A))}
	\end{displaymath}
	Then $s\cD_Z$ is formally cohesive becasue it is the homotopy pullback of formally cohesive functors.  Observe that $s\cD_Z$ and $s\cD$ coincide when $Z$ is trivial.  
	
	\begin{rem} 
		\begin{enumerate}
			\item We'll see later that $s\cD_Z$ is pro-representable, under the assumption $H^{0}(\Gamma, \g_k) = \z_k$.
			\item In the nearly ordinary case, one defines similarly $s\cD_{\loc,Z}= \prod_{v\in S_{p} }s\cD_{v,Z}$ and $s\cD_{\loc,Z}^{\ord} =\prod_{v\in S_{p} } s\cD^{\ord}_{v,Z}$.  Note that the construction for $s\cD_Z$ is functorial in $X$ and $G$, we can form the homotopy pullback $$s\cD_{Z}^{\ord}=s\cD_{Z}\times^h_{s\cD_{\loc,Z}}s\cD_{\loc,Z}^{\ord}. $$
			All these functors are formally cohesive.  We'll see later that $s\cD_{Z}^{\ord}$ is pro-representable, under the assumption $H^{0}(\Gamma, \g_k) = \z_k$.
		\end{enumerate}
	\end{rem}
	
	\begin{pro}\label{simplord}
		When $A$ is homotopy discrete, we have $\pi_{0}s\cD_{Z}(A) \cong \cD(\pi_{0}A)$ and $\pi_{0}s\cD_{v,Z}^{?}(A) \cong \cD^{?}_{v}(\pi_{0}A)$ (here $?=\emptyset$ or $\ord$).  If in addition $(Reg_v)$ holds for each $v\in S_{p}$, then $\pi_{0}s\cD_{Z}^{\ord}(A) \cong \cD^{\ord}(\pi_{0}A)$.
	\end{pro}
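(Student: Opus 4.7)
The plan is to reduce to the case of a classical $A$ by homotopy invariance, identify the simplicial constructions with their classical counterparts, and then separately dispense with the center modification and with the ordinary homotopy fibre product. Since $s\cD_Z$, $s\cD^?_{v,Z}$ and $s\cD^\ord_Z$ are formally cohesive by the preceding proposition, they are in particular homotopy invariant. The hypothesis that $A$ is homotopy discrete means that the natural map $A\to \pi_0 A$ (viewing $\pi_0 A\in\Art_\cO$ as a constant simplicial ring) is a weak equivalence, so I may assume throughout that $A\in\Art_\cO$ is classical.

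For such $A$, the bisimplicial set $\Bi(A)$ is constant in the $q$-direction and collapses to $BG(A)$, so $\diag\Bi(A)=BG(A)$, which is already a Kan complex fibred over $BG(k)$. Hence $BG(A)$ is a strong deformation retract of $\BG(A)$ in $\cM$, and similarly $BB(A)$ is one of $\BB(A)$. Combined with equation (\ref{D(A)}) and its local variant (applied with $X_v$ and with $G$ or $B$), this immediately yields $\pi_0 s\cD(A)\cong \cD(A)$ and $\pi_0 s\cD^?_v(A)\cong \cD^?_v(A)$.

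To pass from $s\cD$ to $s\cD_Z$, one exploits that for classical $A$ the central extension $1\to Z(A)\to G(A)\to PG(A)\to 1$ produces a genuine fibration sequence $BG(A)\to \mathcal{B}PG(A)\to B^2Z(A)$ exhibiting $BG(A)$ as the homotopy fibre. The functor $\sHom_\cM(X,-)$ is right Quillen (as recalled in the proof of the preceding proposition) and therefore commutes with homotopy pullbacks between fibrant objects; applying it produces a homotopy pullback square whose vertex is $\sHom_\cM(X,BG(A))$. Comparing with the defining homotopy pullback square of $s\cD_Z(A)$ gives $s\cD_Z(A)\simeq s\cD(A)$, whence the first assertion. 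The same argument with $X_v$ in place of $X$ and with $B$ in place of $G$ for the ordinary variant gives $\pi_0 s\cD^?_{v,Z}(A)\cong \cD^?_v(A)$.

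Finally, $s\cD^\ord_Z(A)$ is by construction the homotopy pullback of $s\cD_Z(A)\to s\cD_{\loc,Z}(A)\leftarrow s\cD^\ord_{\loc,Z}(A)$. To identify $\pi_0$ of this homotopy pullback with the set-theoretic pullback of the $\pi_0$'s already computed, one needs $\pi_1$-surjectivity of (at least one of) the structure maps, which is precisely the content of the local ordinary tangent-space computation feeding the classical \lemref{ordinary reinterpretation} under $(Reg_v)$. Granting that, the resulting pullback is $\cD(A)\times_{\cD_{\loc}(A)}\cD^\ord_{\loc}(A)\cong \cD^\ord(A)$ by \lemref{ordinary reinterpretation}, completing the proof. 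The main technical obstacle is precisely this $\pi_1$-surjectivity: without it, $\pi_0$ of a homotopy pullback only surjects onto the pullback of $\pi_0$'s and need not coincide with it.
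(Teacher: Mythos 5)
Your overall strategy matches the paper's: reduce to classical $A\in\Art_{\O}$ by homotopy invariance, compare $s\cD_Z$ with $s\cD$ via the center fibration, and treat $s\cD^{\ord}_Z$ by analysing the homotopy pullback. The final step correctly identifies the $\pi_1$-surjectivity issue and its resolution under $(Reg_v)$, though the actual verification --- namely the identifications $\pi_1 s\cD_v(A)\cong H^0(\Gamma_v,\widehat{G}(A))$ and $\pi_1 s\cD^{\ord}_v(A)\cong H^0(\Gamma_v,\widehat{B}(A))$ together with $(Reg_v)$ plus Artinian induction, which is what the paper supplies --- is only gestured at.

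There is, however, a genuine error in the center step. You claim that comparing the square obtained by applying $\sHom_{\cM}(X,-)$ to the fibration sequence $BG(A)\to \mathcal{B}PG(A)\to B^2Z(A)$ with the defining square of $s\cD_Z(A)$ gives a weak equivalence $s\cD_Z(A)\simeq s\cD(A)$. This is false, and the two squares are not the same: applying $\sHom_{\cM}(X,-)$ to the fibration sequence produces a pullback square with a \emph{contractible} top-right corner, whereas the defining square of $s\cD_Z(A)$ has $\sHom_{\cM}(\ast, B^2Z(A))$ there, which is $B^2\widehat{Z}(A)$ and so has $\pi_2\cong\widehat{Z}(A)\neq 0$. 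Indeed, for classical $A$ the fibration sequence $s\cD(A)\to s\cD_Z(A)\to\sHom_{\cM}(\ast,B^2Z(A))$ (which is what the paper uses) shows that $\pi_1 s\cD_Z(A)\cong\pi_1 s\cD(A)/\widehat{Z}(A)$; the two functors agree only on $\pi_0$, because the base of this fibration has trivial $\pi_0$ and $\pi_1$. This is consistent with the later tangent-complex computation $\gt^{-1}s\cD\cong H^0(\Gamma,\g_k)=\z_k$ versus $\gt^{-1}s\cD_Z=0$ --- if the two were weakly equivalent, the whole center modification would be pointless. Your conclusion $\pi_0 s\cD_Z(A)\cong\cD(A)$ is therefore still correct, but only because you happen to use just $\pi_0$; the intermediate claim of a weak equivalence is wrong and should be replaced by the long-exact-sequence argument.
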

	
	\begin{proof}
		We may suppose $A\in \Art_{\O}$ by the formal cohesiveness.
		
		From the definition of $s\cD_{Z}$ it follows that we have a natural fibration sequence $$s\cD(A)\to s\cD_Z(A) \to \sHom_{\cM}(\ast, B^2Z(A)).$$  Since $\pi_{i}\sHom_{\cM}(\ast, B^2Z(A))$ vanishes for $i\neq 2$, we have $\pi_0 s\cD_{Z}(A)=\pi_0 s\cD(A)$.  By Equation \ref{D(A)} of section \ref{subsectReform}, we have $\pi_0 s\cD(A)=\cD(A)$, hence also $\pi_0 s\cD_{Z}(A)=\cD(A)$.
		
		By applying the same arguement with $X$ replaced by $X_v$ and $G$ replaced by $B$ when necessary, we obtain $\pi_{0}s\cD_{v,Z}^{?}(A) \cong \cD^{?}_{v}(A)$ ($?=\emptyset$ or $\ord$).  
		
		We have the exact sequence 
		\begin{align*}
		\pi_{1} s\cD_{Z}(A) \oplus (\bigoplus_{v \in S_{p}} \pi_{1}s\cD_{v,Z}^{\ord}(A) ) \to \bigoplus_{v \in S_{p}} \pi_{1}s\cD_{v,Z}(A) \\ \to \pi_{0}s\cD_{Z}^{\ord}(A) \to \pi_{0} s\cD_{Z}(A) \oplus (\bigoplus_{v \in S_{p}} \pi_{0}s\cD_{v,Z}^{\ord}(A) ) \to \bigoplus_{v \in S_{p}} \pi_{0}s\cD_{v,Z}(A)
		\end{align*}
		We will see later (\lemref{calculation}) that $s\cD_{v}(A)$ is weakly equivalent to $\holim_{\bm{\Delta}X} \hofib_{\ast}(BG(A) \to BG(k))$, and (by \lemref{calculation 2}) $\pi_1s\cD_{v}(A) \cong H^0(\Gamma_v, \widehat{G}(A))$.  Similarly $\pi_1 s\cD^{\ord}_{v}(A) \cong H^0(\Gamma_v, \widehat{B}(A))$.  
		
		By the assumption $(Reg_v)$ and Artinian induction, the map $\pi_{1}s\cD_{v}^{\ord}(A) \to \pi_{1}s\cD_{v}(A)$ is an isomorphism, and so is $\pi_{1}s\cD_{v,Z}^{\ord}(A) \to \pi_{1}s\cD_{v,Z}(A)$.  We deduce that $\pi_{0}s\cD_{Z}^{\ord}(A)$ is the kernel of $\cD(A) \oplus (\bigoplus_{v \in S_{p}} \cD_{v}^{\ord}(A) ) \to \bigoplus_{v \in S_{p}} \cD_{v}(A)$, which is isomorphic to $\cD^{\ord}(A)$ by \lemref{ordinary reinterpretation}.
	\end{proof}

	\section{Pseudo-deformation functors}

	\subsection{Classical pseudo-characters and functors on $\FFS$}
	
	Recall the notion of a (classical) $G$-pseudo-character due to V. Lafforgue (see \cite[D\'efinition-Proposition 11.3]{Laf18} and \cite[Definition 4.1]{BHKT19}):
	
	\begin{de}
		Let $A$ be an $\O$-algebra.  A $G$-pseudo-character $\Theta$ on $\Gamma$ over $A$ is a collection of $\O$-algebra morphisms $\Theta_{n}\colon \O_{N_{n}G}^{\ad G} \to \Map(\Gamma^{n}, A)$ for each $n\geq 1$, satisfying the following conditions:
		\begin{enumerate}
			\item For each $n,m \geq 1$ and for each map $\zeta\colon \{1,\dots, n \} \to \{1,\dots, m \} $, $f\in \O_{N_{m}G}^{\ad G}$, and $\gamma_{1},\dots, \gamma_{m} \in \Gamma$, we have 
			\begin{displaymath}
			\Theta_{m}(f^{\zeta})(\gamma_{1},\dots, \gamma_{m}) = \Theta_{n}(f)(\gamma_{\zeta(1)},\dots, \gamma_{\zeta(n)}),
			\end{displaymath}
			where $f^{\zeta}(g_{1},\dots, g_{m}) = f(g_{\zeta(1)},\dots, g_{\zeta(n)})$.
			\item For each $n\geq 1$, for each $\gamma_{1},\dots, \gamma_{n+1} \in \Gamma$, and for each $f\in \O_{N_{n}G}^{\ad G}$, we have
			\begin{displaymath}
			\Theta_{n+1}(\hat{f})(\gamma_{1},\dots, \gamma_{n+1}) = \Theta_{n}(f)(\gamma_{1},\dots, \gamma_{n-1},\gamma_{n}\gamma_{n+1}),
			\end{displaymath}
			where  $\hat{f}(g_{1},\dots, g_{n+1}) = f(g_{1},\dots, g_{n-1}, g_{n}g_{n+1})$.
		\end{enumerate}
		We denote by $\PsCh(A)$ the set of pseudo-characters over $A$.
	\end{de}
	We want to give a simplicial reformulation of this notion.	As a first step, following \cite{Weid18}, let us consider $\FS$ the category of finite sets and $\FFS$ be the category of finite free semigroups.  For any finite set $X$, let $M_X$ be the finite free semigroup generated by $X$; we have $\Gamma^X=\Hom_{\semGp}(M_X,\Gamma)$ and $ G^X=\Hom_{\semGp}(M_X,G)$.  For a semigroup $M\in \FFS$, note that $\Hom_{\semGp}(M_X,G)$ is a group scheme, so, we can define a covariant functor $\FFS\to \Alg_\cO$, $M\mapsto \cO_{\Hom_{\semGp}(M,G)}$.  We can also define the covariant functor $M\mapsto \Map(\Hom_{\semGp}(M,\Gamma),A)$.  These functors on $\FFS$ extend canonically those defined on the category $\FS$ by $X\mapsto \cO_{G^X}$ and $X\mapsto \Map(\Gamma^X,A)$.  Moreover, the natural transformation $$\O_{G^X}^{\ad G} \to \Map(\Gamma^X,A)$$ extends uniquely to a natural transformation of functors on $\FFS$.  Actually, there are several useful functors on $\FFS$; by the canonical extension from $\FS$ to $\FFS$ mentioned above, it is enough to define them on the objects $[n]$, as in \cite[Example 2.4 and Example 2.5]{Weid18}: 
	\begin{enumerate}
		\item The association $[n] \mapsto \Gamma^{n}$ defines an object $\Gamma^{\bullet}\in \Set^{\FFS^{\op}}$.
		\item For $A\in \Alg_{\O}$, the association $[n] \mapsto \Map(\Gamma^{n}, A)$ defines an object $\Map(\Gamma^{\bullet}, A) \in \Alg_{\O}^{\FFS}$.
		\item The association $[n] \mapsto \O_{N_{n}G}^{\ad G}$ defines an object $\O_{N_{\bullet}G}^{\ad G} \in \Alg_{\O}^{\FFS}$.
		\item Let $G^{n}\dslash G = \Spec(\O_{N_{n}G}^{\ad G})$.  Then for $A\in \Alg_{\O}$, the association $[n] \mapsto (G^{n}\dslash G)(A)$ defines an object $(G^{\bullet}\dslash G)(A) \in \Set^{\FFS^{\op}}$.
	\end{enumerate}
		
	As noted in \cite[Theorem 2.12]{Weid18}, one sees that a $G$-pseudo-character $\Theta$ of $\Gamma$ over $A$ is exactly a natural transformation from $\O_{N_{\bullet}G}^{\ad G}$ to $\Map(\Gamma^{\bullet}, A)$ (we call these natural transformations $\Alg_{\O}^{\FFS}$-morphisms).
	
	\begin{lem}
		For $A\in \Alg_{\O}$, there is a bijection between $\PsCh(A)$ and $\Hom_{\Set^{\FFS^\op}}(\Gamma^{\bullet},(G^{\bullet}\dslash G)(A))$.
	\end{lem}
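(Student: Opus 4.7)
The plan is to establish the bijection pointwise in the index $n$ and then to promote it to a bijection of natural transformations. For each $n\geq 1$, the ring $\Map(\Gamma^n, A) \cong \prod_{\gamma\in\Gamma^n} A$ is a product in $\Alg_{\O}$, so the data of an $\O$-algebra morphism $\Theta_n : \O_{N_nG}^{\ad G} \to \Map(\Gamma^n,A)$ is equivalent to the data of a $\Gamma^n$-indexed family of $\O$-algebra morphisms $\O_{N_nG}^{\ad G} \to A$, i.e.\ to a map of sets
$$\theta_n : \Gamma^n \longrightarrow \Hom_{\Alg_{\O}}(\O_{N_nG}^{\ad G}, A) = (G^n \dslash G)(A).$$
This already yields a pointwise bijection between the collections $(\Theta_n)_{n\geq 1}$ and the collections $(\theta_n)_{n\geq 1}$, with no compatibility yet imposed.

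Next, as recalled just before the lemma, a pseudo-character is nothing but a natural transformation $\O_{N_{\bullet}G}^{\ad G} \to \Map(\Gamma^{\bullet},A)$ in $\Alg_{\O}^{\FFS}$. For any morphism $\varphi : M_{[n]} \to M_{[m]}$ in $\FFS$, the naturality square on the algebra side transposes, by applying $\Hom_{\Alg_{\O}}(-,A)$ and passing to the product decomposition above, to the naturality square for the opposite morphism $\varphi^* : \Gamma^m \to \Gamma^n$ on the set side, relating $\theta_m$ and $\theta_n$. Thus $(\theta_n)_{n\geq 1}$ is natural in $\FFS^{\op}$ if and only if $(\Theta_n)_{n\geq 1}$ is natural in $\FFS$, which transforms the already established pointwise bijection into the desired bijection between $\PsCh(A)$ and $\Hom_{\Set^{\FFS^{\op}}}(\Gamma^{\bullet}, (G^{\bullet}\dslash G)(A))$.

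The remaining combinatorial point, which I expect to be the only real obstacle, is to check that the two axioms in the definition of $\PsCh(A)$ really encode all of naturality over $\FFS$. Axiom (1) records naturality with respect to semigroup maps $M_{[n]}\to M_{[m]}$ induced by arbitrary set maps $\zeta:[n]\to[m]$ (generator to generator), while axiom (2) records naturality with respect to the single "multiplication" map $M_{[n]}\to M_{[n+1]}$ that sends the $n$-th generator to the product of the last two generators of $M_{[n+1]}$. Any semigroup morphism $M_{[n]}\to M_{[m]}$ is determined by the images of its generators, which are nonempty words in $\{1,\ldots,m\}$; such a morphism factors as an iterated application of the multiplication map (to split each target word into its letters) followed by a reindexing of type $\zeta$, where the necessary intermediate permutations are themselves instances of axiom (1). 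Hence naturality under these two generating classes implies naturality over all of $\FFS$, which completes the proof; compare \cite[Example 2.4]{Weid18}.
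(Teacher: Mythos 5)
Your proof is correct and takes essentially the same approach as the paper: the core observation in both is that $\Map(\Gamma^n,A)\cong\prod_{\Gamma^n}A$ is a product, so an $\O$-algebra map $\O_{N_nG}^{\ad G}\to\Map(\Gamma^n,A)$ is the same as a map of sets $\Gamma^n\to\Hom_{\Alg_\O}(\O_{N_nG}^{\ad G},A)=(G^n\dslash G)(A)$, and naturality transfers under this adjunction. The paper's proof is a one-liner because it treats the identification of $\PsCh(A)$ with $\Alg_\O^{\FFS}$-morphisms $\O_{N_\bullet G}^{\ad G}\to\Map(\Gamma^\bullet,A)$ as already established (citing \cite[Theorem 2.12]{Weid18} in the sentence preceding the lemma), whereas you additionally sketch that reduction yourself — showing that the two pseudo-character axioms generate naturality over all of $\FFS$ via the reindexing maps $\zeta$ and the last-two-generators multiplication map. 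That last paragraph is a little loosely phrased (the factorization is really an alternation of multiplication maps and reindexings rather than "all multiplications followed by one reindexing"), but the idea is sound and this is exactly the content that the paper outsources to Weidner.
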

	
	\begin{proof}
		It suffices to note that there is a bijection between $\Set^{\FFS^\op}$-morphisms $\Gamma^{\bullet} \to (G^{\bullet}\dslash G)(A)$ and $\Alg_{\O}^{\FFS}$-morphisms $\O_{N_{\bullet}G}^{\ad G}\to \Map(\Gamma^{\bullet}, A)$.
	\end{proof}
	
	For an algebraically closed field $A$ and a (continuous) homomorphism $\rho\colon \Gamma \to G(A)$, we say that $\rho$ is $G$-completely reducible if any parabolic subgroup containing $\rho(\Gamma)$ has a Levi subgroup containing $\rho(\Gamma)$.  Recall the following results in \cite[Section 4]{BHKT19}:
	
	\begin{thm}
		\begin{enumerate}
			\item \cite[Theorem 4.5]{BHKT19}  Suppose that $A \in \Alg_{\O}$ is an algebraically closed field.  Then we have a bijection between the following two sets:
			\begin{enumerate}
				\item The set of $G(A)$-conjugacy classes of $G$-completely reducible group homomorphisms $\rho\colon \Gamma \to G(A)$,
				\item The set of pseudo-characters over $A$.
			\end{enumerate}
			\item \cite[Theorem 4.10]{BHKT19} Fix an absolutely $G$-completely reducible representation $\bar\rho\colon \Gamma \to G(k)$, and suppose further that the centralizer of $\bar\rho$ in $G_{k}^{\ad}$ is scheme-theoretically trivial.  Let $\bar\Theta$ be the pseudo-character, which regarded as an element of $\Hom_{\Set^{\FFS^\op}}(\Gamma^{\bullet},(G^{\bullet}\dslash G)(k))$, is induced from $(\gamma_{1},\dots,\gamma_{n})\mapsto (\bar\rho(\gamma_{1}), \dots,\bar\rho(\gamma_{n}))$.  Let $A\in \Art_{\O}$.  Then we have a bijection between the following two sets:
			\begin{enumerate}
				\item The set of $\widehat{G}(A)$-conjugacy classes of group homomorphisms $\rho\colon \Gamma \to G(A)$ which lift $\bar\rho$,
				\item The set of pseudo-characters over $A$ which reduce to $\bar\Theta$ modulo $\m_{A}$.
			\end{enumerate}
		\end{enumerate}
	\end{thm}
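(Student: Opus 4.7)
For part (1), the forward direction is formal: a homomorphism $\rho\colon \Gamma\to G(A)$ gives a pseudo-character $\Theta_\rho$ by composing the evaluation $\Gamma^n\to G(A)^n$ with the GIT quotient $G(A)^n\to (G^n\dslash G)(A)$; the two axioms reduce to functoriality in $\FFS$ and the multiplication law of $G$, and conjugate representations visibly produce the same $\Theta_\rho$. The converse rests on the invariant-theoretic input of \cite{BMR05}: for $A$ an algebraically closed field, the $A$-points of $G^n\dslash G$ correspond bijectively to closed $G(A)$-orbits in $G(A)^n$, and such an orbit is closed precisely when the tuple is $G$-completely reducible. Thus a pseudo-character $\Theta$ prescribes, for each tuple $(\gamma_1,\ldots,\gamma_n)\in\Gamma^n$, a unique closed orbit in $G(A)^n$; the $\zeta$-axiom allows one to pick orbit representatives coherently in $n$, and the multiplication axiom upgrades the resulting map $\Gamma\to G(A)/\text{conj.}$ into a genuine homomorphism, well-defined up to a single $G(A)$-conjugation.

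For part (2), I would proceed by induction on the length of $A\in\Art_\cO$, the case $A=k$ being part (1). Let $A\twoheadrightarrow A'$ be a small extension with kernel $I$ killed by $\m_A$, and let $\Theta$ over $A$ reduce to $\Theta'$ over $A'$ coming, by induction, from $\rho'\colon\Gamma\to G(A')$ lifting $\bar{\rho}$ (unique up to $\widehat{G}(A')$-conjugation). We need to lift $\rho'$ to some $\rho\colon\Gamma\to G(A)$ inducing $\Theta$, and to show that any two such lifts are $\widehat{G}(A)$-conjugate. Uniqueness is handled by the centralizer hypothesis: scheme-theoretic triviality of the centralizer of $\bar{\rho}$ in $G_k^{\ad}$ forces $\H^0(\Gamma,\g_k/\z_k)=0$, and two lifts of $\rho'$ yielding the same pseudo-character differ by a $1$-cocycle valued in $(\g_k/\z_k)\otimes I$ whose cohomology class is trivial by the vanishing.

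For existence, I would exploit the étale-local structure of the quotient morphism $G^n\to G^n\dslash G$ at the closed point $\bar{g}=(\bar{\rho}(\gamma_1),\ldots,\bar{\rho}(\gamma_n))$. The scheme-theoretic triviality of its stabilizer in $G_k^{\ad}$ (which amounts to that of $\bar{\rho}$ for $n$ large enough that the $\gamma_i$ generate a topologically dense subgroup of $\Gamma$) implies, via a Luna-slice type argument, that $G^n\to G^n\dslash G$ is formally smooth at $\bar{g}$ with fibers that are $G^{\ad}$-torsors. Hence the lift of $\Theta'$ through $(G^n\dslash G)(A')\to (G^n\dslash G)(A)$ prescribed by $\Theta$ does come from some tuple in $G^n(A)$ lifting $(\rho'(\gamma_1),\ldots,\rho'(\gamma_n))$; coherence in $n$, combined with the pseudo-character axioms, packages these tuples into the desired homomorphism $\rho$. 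The main obstacle is precisely this last step --- transferring the GIT-theoretic bijection from algebraically closed fields to the Artinian setting --- where the scheme-theoretic vanishing of the adjoint centralizer is essential to rule out inner automorphisms invisible to the pseudo-character; this is the content of \cite[Sections 4.3--4.4]{BHKT19}.
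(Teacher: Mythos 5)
The paper does not prove this theorem: it is recalled verbatim from \cite[Theorems 4.5 and 4.10]{BHKT19}, and the only technical ingredient the paper isolates is Proposition~\ref{BHKT key} (which is \cite[Proposition~3.13]{BHKT19}). So the comparison here is against the argument in \cite{BHKT19}, not against a proof in the paper. Your sketch correctly identifies the two GIT inputs (closed orbits $\leftrightarrow$ $G$-complete reducibility via \cite{BMR05}; formal-local structure of $G^n\to G^n\dslash G$ at a $k$-point with scheme-theoretically trivial adjoint stabilizer), but two steps are not sound as written.

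In part (1), the sentence ``the $\zeta$-axiom allows one to pick orbit representatives coherently in $n$, and the multiplication axiom upgrades the resulting map $\Gamma\to G(A)/\text{conj.}$ into a genuine homomorphism'' hides the actual construction and is not a proof. A consistent system of closed orbits indexed by tuples does not yield a single map $\Gamma\to G(A)/\text{conj.}$ that one then ``upgrades''. The argument in \cite{BHKT19} (mirrored, for the weaker simplicial structure, in the paper's proof just after Lemma~\ref{quasi-hom}) fixes once and for all a reference tuple $\underline{\delta}=(\delta_1,\dots,\delta_n)$ chosen to maximize $n(\underline{\delta})$ and minimize the centralizer of $H(\underline{\delta})$, fixes a closed-orbit representative $(h_1,\dots,h_n)$ of $\underline{\delta}$, and then defines $\rho(\gamma)$ as the unique last coordinate of the closed-orbit representative of $(\underline{\delta},\gamma)$ extending $(h_1,\dots,h_n)$. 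Only relative to this anchor can the multiplicativity be verified. Indeed, the paper's whole point in Section~3 is that the weaker $\bm{\Delta}$-structure (as opposed to $\FFS$) yields in this way only a \emph{quasi-}homomorphism, with genuine homomorphisms recovered only under the extra reflection symmetry of Theorem~\ref{bDef}; this shows the $\FFS$-functoriality is doing real work that your phrase ``pick coherently'' absorbs without justification.

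In part (2), the uniqueness step contains a genuine logical error. If $\rho_1,\rho_2$ lift $\rho'$ along a small extension with kernel $I$, they differ by a cocycle $c\in Z^1(\Gamma,\g_k\otimes I)$. The assertion that ``the cohomology class is trivial by the vanishing'' of $H^0(\Gamma,\g_k/\z_k)$ is a non sequitur: $H^0$-vanishing does not imply $H^1$-vanishing, and indeed $H^1(\Gamma,\g_k/\z_k)$ is the (generically nonzero) tangent space of the deformation problem. What one actually needs is Proposition~\ref{BHKT key}(2): the pseudo-character pins down the image of the tuple in $(G^n\dslash G)(A)$, and that proposition identifies the corresponding preimage in $G^n(A)$ with a single $G^\wedge(A)$-orbit, which is precisely the statement that the two tuples (hence, after passing to a large enough reference tuple, the two representations) are $\widehat{G}(A)$-conjugate. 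The scheme-theoretic centralizer triviality (equivalently $H^0(\Gamma,\g_k/\z_k)=0$ plus étale triviality of the stabilizer group scheme) enters through Proposition~\ref{BHKT key}(1), to make the $G^\wedge$-action free; it does not by itself produce the conjugating element.
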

	
	Note that there are similarities between $\Set^{\FFS^\op}$ and $\Set^{\bm{\Delta}^\op} = \sSet$.  In the following, we shall prove similar results with $\Set^{\FFS^\op}$ replaced by $\sSet$.

	\subsection{Classical pseudo-characters and simplicial objects}
	
	Recall that on $\O_{N_{\bullet}G}$ there are natural coface and codegeneracy maps, and we can regard $\O_{N_{\bullet}G}$ as an object in $\Alg_{\O}^{\bm{\Delta}}$ ({\it i.e.} a cosimplicial $\cO$-algebra).  The adjoint action of $G$ on $G^{\bullet}$ induces an action of $G$ on $\O_{N_{\bullet}G}$, which obviously commutes with the coface and codegeneracy maps.  In consequence, $\O_{N_{\bullet}G}^{\ad G}$ is well-defined in $\Alg_{\O}^{\bm{\Delta}}$.  
	
	\begin{de}
		We define the functor $\bar{B}G \colon \Alg_{\O} \to \sSet$ by associating $A\in \Alg_{\O}$ to $\Hom_{\Alg_{\O}}(\O_{N_{\bullet}G}^{\ad G}, A)$ with face and degeneracy maps induced from the coface and codegeneracy maps in $\O_{N_{\bullet}G}^{\ad G}$.
	\end{de}
	
	Note that the inclusion $\O_{N_{\bullet}G}^{\ad G} \to \O_{N_{\bullet}G}$ gives a natural transformation $BG \to \bar{B}G$.

	\subsubsection{Algebraically closed field}
	
	Let $A \in \Alg_{\O}$ be an algebraically closed field.  We would like to characterize the elements of $\Hom_{\sSet}(B\Gamma, \bar{B}G(A))$.  They correspond to the quasi-homomorphisms, which we define below.
	
	\begin{de}
		Let $\Gamma$ and $G$ be two groups.  We say a map $\rho\colon \Gamma \to G$ is a quasi-homomorphism if there exists a a map $\phi \colon \Gamma \to G$ such that $\rho(x)^{-1}\rho(xy) = \phi(x) \rho(y) \phi(x)^{-1}$ for any $x,y\in \Gamma$. 
	\end{de}
	
	Obviously a group homomorphism is a quasi-homomorphism.  Note that every quasi-homomorphism preserves the identity, and the set of quasi-homomorphisms is closed under $G$-conjugations.
	
	\begin{rem}
		A quasi-homomorphism can fail to be a group homomorphism.  We can construct a quasi-homomorphism as follows: let $\sigma \colon \Gamma \to G$ be a group homomorphism, let $\phi \colon \Gamma \to Z(\sigma(\Gamma))$ be a group homomorphism and let $g\in G$, then $\rho(x) = g^{-1} \sigma(x) \phi(x)g\phi(x)^{-1}$ is a quasi-homomorphism.  Such $\rho$ is not necessarily a group homomorphism, an example could be the following: take $G=H\times H$, $\sigma \colon \Gamma \to H \times \{\e\} $ and $\phi \colon \Gamma \to \{\e\} \times H$, and choose $g$ such that $g\notin Z(\phi(\Gamma))$.
	\end{rem}
	
	\begin{lem}\label{quasi-hom}
		Let $\rho$ be a quasi-homomorphism and let $\phi$ as above.  Then the map $\phi$ induces a group homomorphism $\Gamma \to G/Z(\rho(\Gamma))$ which doesn't depend on the choice of $\phi$.
	\end{lem}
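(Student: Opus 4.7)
The plan is to derive everything by exploiting associativity in $\Gamma$. First I would dispose of two preliminary normalisations: taking $y=1$ in the defining relation yields $\rho(1)=1$, and taking $x=1$ yields $\phi(1)\rho(y)\phi(1)^{-1}=\rho(y)$ for all $y$, so $\phi(1) \in Z(\rho(\Gamma))$. Next I would verify that each $\phi(x)^{-1}$ conjugates $Z(\rho(\Gamma))$ into itself: since $\phi(x)\rho(y)\phi(x)^{-1}=\rho(x)^{-1}\rho(xy)$ is a product of elements of $\rho(\Gamma)$, any $z\in Z(\rho(\Gamma))$ commutes with it, and rearranging shows $\phi(x)^{-1}z\phi(x)\in Z(\rho(\Gamma))$. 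This ensures that the left coset $\phi(x)\cdot Z(\rho(\Gamma))$ multiplies well against itself and makes the quotient a valid target for a group homomorphism.

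The heart of the argument is a computation of $\rho(xyz)$ in two ways. Writing the defining relation as $\rho(ab)=\rho(a)\phi(a)\rho(b)\phi(a)^{-1}$ and iterating on $(xy)z=x(yz)$ gives
\[
\rho(x)\phi(x)\rho(y)\phi(x)^{-1}\phi(xy)\rho(z)\phi(xy)^{-1} \;=\; \rho(x)\phi(x)\rho(y)\phi(y)\rho(z)\phi(y)^{-1}\phi(x)^{-1}.
\]
Cancelling the common prefix $\rho(x)\phi(x)\rho(y)$ and multiplying on the right by $\phi(x)$ yields, after setting $w=\phi(x)^{-1}\phi(xy)$, the identity $w\rho(z)w^{-1}=\phi(y)\rho(z)\phi(y)^{-1}$ for every $z\in\Gamma$. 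Rearranging shows that $\phi(y)^{-1}\phi(x)^{-1}\phi(xy)$ centralises $\rho(\Gamma)$, i.e.\ lies in $Z(\rho(\Gamma))$. Thus $\phi(xy)\equiv\phi(x)\phi(y)\pmod{Z(\rho(\Gamma))}$, which, combined with $\phi(1)\in Z(\rho(\Gamma))$ from the first step, shows that the composite $\bar\phi\colon\Gamma\to G/Z(\rho(\Gamma))$ is a group homomorphism.

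For independence, suppose $\phi'$ satisfies the same relation for the given $\rho$. Then $\phi(x)\rho(y)\phi(x)^{-1}=\phi'(x)\rho(y)\phi'(x)^{-1}$ for all $x,y$, so $\phi'(x)^{-1}\phi(x)$ commutes with every element of $\rho(\Gamma)$ and hence lies in $Z(\rho(\Gamma))$; this gives $\bar\phi=\bar\phi'$. The computation itself is routine; the only point needing care is the interpretation of $G/Z(\rho(\Gamma))$, and this is precisely what the normalisation property from the first paragraph supplies — strictly speaking one lands in $N_G(Z(\rho(\Gamma)))/Z(\rho(\Gamma))$, and the homomorphism formula $\bar\phi(xy)=\bar\phi(x)\bar\phi(y)$ is then unambiguous.
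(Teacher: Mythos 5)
Your proof is correct and follows essentially the same route as the paper's: both establish the identity $\phi(xy)\rho(z)\phi(xy)^{-1} = \phi(x)\phi(y)\rho(z)\phi(y)^{-1}\phi(x)^{-1}$ (you derive it from associativity $\rho((xy)z)=\rho(x(yz))$, the paper by directly expanding $\rho(xy)^{-1}\rho(xyz)$), conclude that $\phi(xy)^{-1}\phi(x)\phi(y)\in Z(\rho(\Gamma))$, and treat uniqueness identically. Your additional remarks — that $\rho(1)=1$, $\phi(1)\in Z(\rho(\Gamma))$, and that $\phi(x)$ normalizes $Z(\rho(\Gamma))$ so the target is really $N_G(Z(\rho(\Gamma)))/Z(\rho(\Gamma))$ — are a reasonable and harmless clarification that the paper leaves implicit.
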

	
	\begin{proof}
		For $x,y,z\in \Gamma$, we have
		\begin{align*}
		\phi(xy) \rho(z) \phi(xy)^{-1} &= \rho(xy)^{-1} \rho(xyz) \\ &= (\phi(x) \rho(y) \phi(x)^{-1})^{-1} (\phi(x) \rho(yz) \phi(x)^{-1}) \\&= \phi(x) \rho(y)^{-1}\rho(yz) \phi(x)^{-1}\\ &= \phi(x) \phi(y) \rho(z)  \phi(y)^{-1}\phi(x)^{-1}.
		\end{align*}
		Hence $\phi(xy)^{-1}\phi(x) \phi(y) \in Z(\rho(\Gamma))$ for any $x,y\in \Gamma$, and $\phi$ induces a group homomorphism $\Gamma \to G/Z(\rho(\Gamma))$.  For any other choice $\phi_{1}$ such that $\rho(x)^{-1}\rho(xy) = \phi_{1}(x) \rho(y) \phi_{1}(x)^{-1}$, we see $\phi_{1}^{-1}(x)\phi(x) \in Z(\rho(\Gamma))$, and the conclusion follows.
	\end{proof}
	
	\begin{lem}
		Suppose that $A \in \Alg_{\O}$ is an algebraically closed field.  Let $f\in \Hom_{\sSet}(B\Gamma, \bar{B}G(A))$.  Then we can associate a quasi-homomorphism $\rho\colon \Gamma \to G(A)$ to $f$ such that $f$ sends $(\gamma_{1},\dots, \gamma_{n}) \in B\Gamma_{n}$ to the class in $\bar{B}G(A)_{n}$ represented by $(\rho(\prod_{j=1}^{i-1}\gamma_{j})^{-1}\rho(\prod_{j=1}^{i}\gamma_{j}))_{i=1,\dots, n}$.
	\end{lem}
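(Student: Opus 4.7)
The starting point is that, since $A$ is algebraically closed and $G$ is reductive, the $A$-points of $G^n \dslash G = \Spec \O_{N_nG}^{\ad G}$ parametrize the closed $G$-orbits in $G^n(A)$ under diagonal conjugation. I will therefore view each $f_n(\gamma_1,\dots,\gamma_n) \in \bar{B}G(A)_n$ as a closed orbit $\mathcal{O}_n(\gamma_1,\dots,\gamma_n) \subset G^n(A)$. The face maps $d_0, d_1, d_2 \colon \bar{B}G(A)_2 \to \bar{B}G(A)_1$ correspond respectively, on orbits, to the second projection, the multiplication $G^2 \to G$, and the first projection; similar descriptions hold for $n \ge 3$.

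To build $\rho$, I first use $f_1$. The simplicial identity $f_1(e) = s_0(f_0(\ast))$ forces $f_1(e) = [e]$, so I set $\rho(e) = e$ and, for every $\gamma \in \Gamma \setminus \{e\}$, I pick any representative $\rho(\gamma) \in G(A)$ of the closed orbit $f_1(\gamma)$. Next I exploit $f_2$: given a pair $(\gamma_1, \gamma_2)$, choose a representative $(h_1, h_2)$ of $\mathcal{O}_2(\gamma_1, \gamma_2)$; face compatibility forces $[h_1] = f_1(\gamma_1)$, so after a simultaneous $G$-conjugation I may assume $h_1 = \rho(\gamma_1)$. Then $[h_2] = f_1(\gamma_2) = [\rho(\gamma_2)]$ yields $h_2 = \phi\, \rho(\gamma_2)\phi^{-1}$ for some $\phi \in G(A)$, and $[\rho(\gamma_1) h_2] = f_1(\gamma_1\gamma_2) = [\rho(\gamma_1\gamma_2)]$ lets me replace the lift $\rho(\gamma_1\gamma_2)$, within its $G$-conjugacy class, by $\rho(\gamma_1) h_2$. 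This yields
\[
\rho(\gamma_1)^{-1}\rho(\gamma_1\gamma_2) = \phi\, \rho(\gamma_2)\, \phi^{-1},
\]
which is both the formula at $n = 2$ and the quasi-homomorphism relation with a conjugator $\phi = \phi(\gamma_1,\gamma_2)$ \emph{a priori} depending on both arguments.

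The main obstacle is coherence: the replacement of $\rho(\gamma_1\gamma_2)$ above depends on the chosen decomposition, and $\phi$ may depend on both $\gamma_1$ and $\gamma_2$. Both issues should be resolved by invoking the higher simplicial data $f_n$ for $n \ge 3$; the face identities relating $d_0 f_3$ to $f_2$ enforce, via associativity of multiplication in $G$, that the conjugators $\phi(\gamma_1, \gamma_2)$ agree modulo the centralizer $Z(\rho(\Gamma))$ as $\gamma_2$ varies, so that a consistent $\phi(\gamma_1)$ depending only on $\gamma_1$ can be extracted, in harmony with \lemref{quasi-hom}. Once coherent $\rho$ and $\phi$ are fixed, the formula for general $n$ follows by induction on $n$, using face-map compatibility for $f_n$ together with the quasi-homomorphism property. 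The hardest part will be the delicate bookkeeping in this coherence step: one must track the varying centralizers $Z(\rho(\gamma))$ carefully, and perhaps replace the na\"\i ve initial choice of $\rho(\gamma)$ by one built via a transfinite induction over $\Gamma$ so that all the simplicial constraints are satisfied simultaneously.
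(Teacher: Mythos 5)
Your outline identifies the right starting data (interpreting $\bar BG(A)_n$ via closed orbits and reading off the face maps), but the core of the argument is missing, and the hole you flag at the end is not a "bookkeeping" issue that higher simplicial identities or a transfinite induction will fix. The problem is that choosing $\rho(\gamma)$ as an \emph{arbitrary} representative of the closed orbit $f_1(\gamma)$, independently for each $\gamma$, does not produce a function for which the displayed formula can hold: for a fixed $\gamma$, the tuples $(\rho(\gamma_1),\rho(\gamma_1)^{-1}\rho(\gamma_1\gamma_2))$ would have to lie in the closed orbit $f_2(\gamma_1,\gamma_2)$ for \emph{all} decompositions $\gamma=\gamma_1\gamma_2$ simultaneously, and your proposed fix — "replace the lift $\rho(\gamma_1\gamma_2)$ by $\rho(\gamma_1)h_2$" — overwrites $\rho(\gamma_1\gamma_2)$ differently for different decompositions, so it is not a construction of a single map. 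Moreover the claim that the conjugators "agree modulo $Z(\rho(\Gamma))$" is the \emph{conclusion} of \lemref{quasi-hom}, which presupposes that $\rho$ is already a quasi-homomorphism; invoking it here is circular.

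The paper's proof handles this by introducing an \emph{anchor}. One chooses a tuple $\underline\delta=(\delta_1,\dots,\delta_n)$ that is extremal in a precise sense (maximizing the dimension of a minimal parabolic containing the Zariski closure $H(\underline\delta)$ of the group generated by a closed-orbit representative $T(\underline\delta)=(h_1,\dots,h_n)$, then minimizing $\dim Z_{G_A}(H(\underline\delta))$ and $\#\pi_0 Z_{G_A}(H(\underline\delta))$). The key inputs, taken from the proof of \cite[Theorem 4.5]{BHKT19}, are: (i) for any $(\gamma_1,\dots,\gamma_m)$ there is a \emph{unique} tuple $(g_1,\dots,g_m)$ such that $(h_1,\dots,h_n,g_1,\dots,g_m)$ is conjugate to $T(\underline\delta,\gamma_1,\dots,\gamma_m)$; and (ii) any finite subset of the group generated by $(h_1,\dots,h_n,g_1,\dots,g_m)$ containing $(h_1,\dots,h_n)$ has closed orbit. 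One then \emph{defines} $\rho(\gamma)$ by appending to $(h_1,\dots,h_n)$, which makes $\rho$ canonically determined by the choice of $T(\underline\delta)$; all the compatibilities you were trying to force then come for free by comparing face maps between $(\delta_1,\dots,\delta_n,\gamma_1,\dots,\gamma_m)$ and its sub-tuples, and the quasi-homomorphism relation falls out of comparing the tuples attached to $(\underline\delta,x,\underline\delta,y)$ and $(\underline\delta,\underline\delta,y)$, using minimality of $Z_{G_A}(H(\underline\delta))$ to show the conjugator $\phi(x)$ is forced. Without this anchor and the uniqueness statement (i), the "coherence step" you describe has no mechanism to succeed, so the proposal has a genuine gap at its center.
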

	
	\begin{proof}
		For each $n\geq 1$ and $\underline{\gamma} =(\gamma_{1},\dots, \gamma_{n}) \in \Gamma^{n}$, we choose a representative $T(\underline{\gamma}) = (g_{1}, \dots, g_{n}) \in G(A)^{n}$ of $f(\underline{\gamma})$ with closed orbit, note that any other representative with closed orbit is conjugated to $(g_{1}, \dots, g_{n})$.  Let $H(\underline{\gamma})$ be the Zariski closure of the subgroup of $G(A)$ generated by the entries of $T(\underline{\gamma})$.  Let $n(\underline{\gamma})$ be the dimension of a parabolic $P\subseteq G_{A}$ minimal among those containing $H(\underline{\gamma})$, we see $n(\underline{\gamma})$ is independent of the choice of $P$.  Let $N = \sup_{n\geq 1, \underline{\gamma} \in \Gamma^{n}}n(\underline{\gamma})$.  We fix a choice of $\underline{\delta} = (\delta_{1}, \dots, \delta_{n})$ satisfying the following conditions:
		\begin{enumerate}
			\item $n(\underline{\delta})=N$.
			\item For any $\underline{\delta}^{\prime} \in \Gamma^{n^{\prime}}$ satisfying (1), we have $\dim Z_{G_{A}}(H(\underline{\delta})) \leq \dim Z_{G_{A}}(H(\underline{\delta}^{\prime}))$.
			\item For any $\underline{\delta}^{\prime} \in \Gamma^{n^{\prime}}$ satisfying (1) and (2), we have $\# \pi_{0} (Z_{G_{A}}(H(\underline{\delta}))) \leq \# \pi_{0}( Z_{G_{A}}(H(\underline{\delta}^{\prime})))$. 
		\end{enumerate}
		Write $T(\underline{\delta}) = (h_{1}, \dots, h_{n})$.  As in the proof of \cite[Theorem 4.5]{BHKT19}, we have the following facts:
		\begin{enumerate}
			\item For any $(\gamma_{1}, \dots, \gamma_{m}) \in \Gamma^{m}$, there exists a unique tuple $(g_{1}, \dots, g_{m})\in G(A)^{m}$ such that $(h_{1}, \dots, h_{n}, g_{1}, \dots, g_{m})$ is conjugated to $T(\delta_{1}, \dots, \delta_{n},\gamma_{1}, \dots, \gamma_{m})$.  
			\item Let $(h_{1}, \dots, h_{n}, g_{1}, \dots, g_{m})$ be as above.  Any finite subset of the group generated by $(h_{1}, \dots, h_{n}, g_{1}, \dots, g_{m})$ which contains $(h_{1}, \dots, h_{n})$ has a closed orbit.
		\end{enumerate}
		We define $\rho(\gamma)$ to be the unique element such that $(h_{1}, \dots, h_{n}, \rho(\gamma))$ is conjugated to $T(\delta_{1}, \dots, \delta_{n},\gamma)$.  
		
		Suppose for $\gamma_{1},\dots,\gamma_{m} \in \Gamma$, the unique tuple conjugated to $T(\delta_{1}, \dots, \delta_{n},\gamma_{1}, \dots, \gamma_{m})$ is $(h_{1}, \dots, h_{n}, g_{1}, \dots, g_{m})$.  Consider the following diagram, where the horizontal arrows are compositions of face maps:
		\begin{displaymath}
		\xymatrix{
			(\delta_{1}, \dots, \delta_{n},\gamma_{1}, \dots, \gamma_{m})\ar[r]\ar[d]  & (h_{1},\dots, h_{n}, g_{1},\dots, g_{m}) \ar[d] \\
			(\delta_{1}, \dots, \delta_{n},\prod_{j=1}^{i}\gamma_{j}) \ar[r] & (h_{1},\dots, h_{n}, \prod_{j=1}^{i}g_{j})
		}
		\end{displaymath}
		Since $(h_{1},\dots, h_{n}, \prod_{j=1}^{i}g_{j})$ has a closed orbit and is a pre-image of $f(\delta_{1}, \dots, \delta_{n},\prod_{j=1}^{i}\gamma_{j})$, we have $\prod_{j=1}^{i} g_{j} = \rho(\prod_{j=1}^{i}\gamma_{j})$, and $g_{i} =\rho(\prod_{j=1}^{i-1}\gamma_{j})^{-1}\rho(\prod_{j=1}^{i}\gamma_{j})$ $(\forall i=1,\dots,m)$.
		
		Let $x,y\in\Gamma$.  Then the element in $G(A)^{2n+2}$ associated to $(\delta_{1},\dots, \delta_{n},x, \delta_{1},\dots, \delta_{n},y)$ is $$(h_{1},\dots, h_{n}, \rho(x), \rho(x)^{-1}\rho(x\delta_{1}),\dots, \rho(x\prod_{j=1}^{n-1}\delta_{j})^{-1}\rho(x\prod_{j=1}^{n}\delta_{j}), \rho(x\prod_{j=1}^{n}\delta_{j})^{-1}\rho(x\prod_{j=1}^{n}\delta_{j} \cdot y)),$$ and the element in $G(A)^{2n+1}$ associated to $(\delta_{1},\dots, \delta_{n},\delta_{1},\dots, \delta_{n},y)$ is $$(h_{1},\dots, h_{n},\rho(\delta_{1}),\dots, \rho(\prod_{j=1}^{n-1}\delta_{j})^{-1}\rho(\prod_{j=1}^{n}\delta_{j}), \rho(\prod_{j=1}^{n}\delta_{j})^{-1}\rho(\prod_{j=1}^{n}\delta_{j} \cdot y)).$$  We see both $ (\rho(x\prod_{j=1}^{i-1}\delta_{j})^{-1}\rho(x\prod_{j=1}^{i}\delta_{j}))_{i=1,\dots,n}$ and $ (\rho(\prod_{j=1}^{i-1}\delta_{j})^{-1}\rho(\prod_{j=1}^{i}\delta_{j}))_{i=1,\dots,n}$ have a closed orbit and are pre-images of $f(\delta_{1}, \dots, \delta_{n})$, so they are conjugated by some $\phi(x) \in G(A)$.  Since $Z_{G_{A}}(H(\underline{\delta}))$ is minimal by the defining property, $\phi(x)$ must conjugate $\rho(\prod_{j=1}^{n}\delta_{j})^{-1}\rho(\prod_{j=1}^{n}\delta_{j} \cdot y)$ to $\rho(x\prod_{j=1}^{n}\delta_{j})^{-1}\rho(x\prod_{j=1}^{n}\delta_{j} \cdot y)$.  We deduce that $\forall x,y\in \Gamma$, $\rho(x)^{-1}\rho(xy) = \phi(x) \rho(y) \phi(x)^{-1}$, and $\rho$ is a quasi-homomorphism.  It's obvious that for any $(\gamma_{1},\dots, \gamma_{n}) \in \Gamma^n$, $(\rho(\prod_{j=1}^{i-1}\gamma_{j})^{-1}\rho(\prod_{j=1}^{i}\gamma_{j}))_{i=1,\dots, n}$ is a pre-image of $f(\gamma_{1},\dots, \gamma_{n})$.
	\end{proof}

	\subsubsection{Artinian coefficients}\label{artpseudochar}
	
	Let $\bar\rho \colon \Gamma \to G(k)$ be an absolutely $G$-completely reducible representation, and suppose that $H^{0}(\Gamma, \g) = \z$.  We write $\bar f \in \Hom_{\sSet}(B\Gamma, \bar{B}G(k))$ for the map induced from $(\gamma_{1},\dots, \gamma_{n}) \mapsto (\bar\rho(\gamma_{1}), \dots, \bar\rho(\gamma_{n}))$.
	
	\begin{de}
		For $A\in \Art_{\O}$, the set $\aDef_{\bar f}(A)$ is the fiber over $\bar f$ of the map $$\Hom_{\sSet}(B\Gamma, \bar{B}G(A)) \to \Hom_{\sSet}(B\Gamma, \bar{B}G(k)).$$
	\end{de}
	
	\begin{de}
		Let $A\in \Art_{\O}$.  We say a map $\rho\colon \Gamma \to G(A)$ is a quasi-lift of $\bar\rho$ if $\rho \mod \m_{A} = \bar\rho$ and $\rho$ is a quasi-homomorphism.
	\end{de}
	
	\begin{rem}
		In general, a quasi-lift may not be a group homomorphism.  Let $0\to I \to A_{1} \twoheadrightarrow A_{0}$ be an infinitesimal extension in $\Art_{\O}$.  Let $\rho_{0} \colon \Gamma \to G(A_{0})$ be a group homomorphism, let $\sigma\colon G(A_{0}) \to G(A_{1})$ be a set-theoretic section of $G(A_{1}) \to G(A_{0})$ and let $\tilde{\rho} = \sigma \circ \rho_{0}$.  Let's construct a quasi-lift $\rho_{1}=\exp(X_{\alpha}) \tilde{\rho}$ where $X \colon \Gamma \to \g\otimes_{k} I$ is a cochain to be determined.    
		
		For $\alpha, \beta\in \Gamma$, there exists $c_{\alpha,\beta} \in \g \otimes_{k} I$ such that $\tilde{\rho}(\alpha) \tilde{\rho}(\beta) = \exp(c_{\alpha, \beta})\tilde{\rho}(\alpha\beta)$ since $\rho_{0} \colon \Gamma \to G(A_{0})$ is a group homomorphism.  It's easy to check that $c\in Z^{2}(\Gamma, \g \otimes_{k} I)$.  Let $\phi(\alpha) = \exp(Y_{\alpha})$ where $Y\colon \Gamma \to \g \otimes_{k} I$ is a group homomorphism also to be determined.  We require $\rho_{1}(\alpha\beta) =\rho_{1}(\alpha) \phi(\alpha)\rho_{1}(\beta)\phi(\alpha)^{-1}$ for all $\alpha,\beta \in \Gamma$.  Note that $\rho_{1}(\alpha\beta) = \exp(X_{\alpha\beta})\tilde{\rho}(\alpha\beta)$ and 
		\begin{align*}
		\rho_{1}(\alpha) \phi(\alpha)\rho_{1}(\beta)\phi(\alpha)^{-1} &= \exp(X_{\alpha}) \tilde{\rho}(\alpha)\exp(Y_{\alpha})\exp(X_{\beta}) \tilde{\rho}(\beta) \exp(Y_{\alpha})^{-1}\\
		&=\exp(X_{\alpha}) \tilde{\rho}(\alpha) \exp(X_{\beta} +Y_{\alpha} -\Ad\tilde{\rho}(\beta)Y_{\alpha}) \tilde{\rho}(\beta) \\
		&= \exp(X_{\alpha} + \Ad\tilde{\rho}(\alpha)X_{\beta} ) \exp(\Ad \tilde\rho (\alpha) (1-\Ad \tilde\rho (\beta)) Y_{\alpha}) \tilde{\rho}(\alpha)\tilde{\rho}(\beta)\\
		&= \exp(X_{\alpha} + \Ad\tilde{\rho}(\alpha)X_{\beta} ) \exp(\Ad \tilde\rho (\alpha) (1-\Ad \tilde\rho (\beta)) Y_{\alpha}) \exp(c_{\alpha, \beta}) \tilde{\rho}(\alpha\beta)
		\end{align*}
		so we need to find a group homomorphism $Y\colon \Gamma \to \g \otimes_{k} I$ such that $\Ad \tilde\rho (\alpha) (1-\Ad \tilde\rho (\beta)) Y_{\alpha}) + c_{\alpha,\beta}$ is a coboundary.  In particular, in the case $H^{2}(\Gamma, \g) = 0$, we can take an arbitrary group homomorphism $Y\colon \Gamma \to \g$.  Note that $\rho_{1}$ is a group homomorphism if and only if $\phi(\alpha) = \exp(Y_{\alpha}) \in Z(A)$ for any $\alpha \in \Gamma$.  
	\end{rem}
	
	\begin{lem}
		Let $A\in \Art_{\O}$ and let $\rho \colon \Gamma \to G(A)$ be a quasi-lift of $\bar\rho$.  Then $Z(\rho(\Gamma)) = Z(A)$. 
	\end{lem}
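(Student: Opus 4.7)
The plan is to proceed by induction on the length of $A$ as an $\cO$-module. The inclusion $Z(A) \subseteq Z_{G(A)}(\rho(\Gamma))$ is trivial, so only the reverse requires work. For the base case $A=k$, the equality $Z_{G(k)}(\bar\rho(\Gamma))=Z(k)$ is a consequence of the standing hypotheses that $\bar\rho$ is absolutely $G$-completely reducible and $H^{0}(\Gamma, \g_k) = \z_k$, combined with the smoothness of $Z$: these jointly force the scheme-theoretic centralizer of $\bar\rho(\Gamma)$ in $G_k$ to coincide with $Z_k$, so that their $k$-points agree.

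For the inductive step I would pick a minimal nonzero ideal $I \subset A$, so that $\m_A I = 0$ and $I$ is one-dimensional over $k$, and set $A_0 := A/I$. The reduction $\rho_0 := \rho \bmod I$ is again a quasi-lift of $\bar\rho$. Given $g \in Z_{G(A)}(\rho(\Gamma))$, its image $\bar g \in G(A_0)$ centralizes $\rho_0(\Gamma)$, hence by induction lies in $Z(A_0)$. Using the smoothness of $Z$, I would lift $\bar g$ to some $z \in Z(A)$ and replace $g$ by $z^{-1}g$: this reduces us to the case $g \in \ker(G(A) \to G(A_0))$. Since $I^{2}=0$, this kernel is canonically identified with $\g \otimes_k I$ via the exponential, so $g = \exp(Y)$ for some $Y \in \g \otimes_k I$.

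Unwinding the centralizing identity $\rho(\gamma)\, g\, \rho(\gamma)^{-1} = g$ yields $\Ad(\rho(\gamma))Y = Y$ for every $\gamma \in \Gamma$. The crucial observation — and the step where the subtlety lies — is that since $Y \in \g \otimes_k I$ and $\m_A I = 0$, the adjoint action of $\rho(\gamma)$ on $Y$ depends only on the reduction $\bar\rho(\gamma) \in G(k)$, and $\bar\rho$ is an honest group homomorphism. Hence $Y$ belongs to $(\g_k \otimes_k I)^{\bar\rho(\Gamma)} = H^{0}(\Gamma, \g_k) \otimes_k I = \z_k \otimes_k I$, and therefore $g = \exp(Y) \in \widehat{Z}(A)$, closing the induction.

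The main obstacle to address is precisely that $\rho$ need not be a genuine homomorphism, so a priori one cannot manipulate $\Ad(\rho(\gamma_1\gamma_2))$ as $\Ad(\rho(\gamma_1))\Ad(\rho(\gamma_2))$ nor directly invoke the $H^{0}$ calculation for $\rho$ itself. This is resolved by the observation above: on the subquotient $\g \otimes_k I$, where $\m_A I = 0$ collapses higher-order terms, the adjoint action factors through $\bar\rho$, and for $\bar\rho$ the classical infinitesimal centralizer argument applies verbatim.
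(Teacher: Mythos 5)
Your proof is correct, and it essentially reconstructs the argument of \cite[Lemma 3.1]{Til96} that the paper simply cites (Artinian induction on the length of $A$, reducing to the infinitesimal computation on $\g\otimes_k I$). Crucially, you identify the exact point — that $\Ad(\rho(\gamma))$ on $\g\otimes_k I$ factors through $\bar\rho(\gamma)$ because $\m_A I=0$ — that justifies the paper's parenthetical remark that the proof does not require $\rho$ to be a genuine homomorphism, so your write-up fills in precisely the observation the authors flag as the reason the lemma extends to quasi-lifts.
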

	
	\begin{proof}
		See \cite[Lemma 3.1]{Til96} (note that the condition that $\rho$ is a group homomorphism is not used in the proof).
	\end{proof}
	
	\begin{cor}
		Let $A\in \Art_{\O}$ and let $\rho \colon \Gamma \to G(A)$ be a quasi-lift of $\bar\rho$.  Then $\rho$ induces a uniquely determined group homomorphism $\phi\colon\Gamma \to \ker( G^{\ad}(A) \to G^{\ad}(k))$ such that $\rho(x)^{-1}\rho(xy) = \phi(x) \rho(y) \phi(x)^{-1}$ for any $x,y\in \Gamma$.
	\end{cor}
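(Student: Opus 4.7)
The plan is to deduce the corollary directly from the previous two results, with no further input needed. First I would apply \lemref{quasi-hom} to the quasi-lift $\rho$ to produce a group homomorphism $\phi \colon \Gamma \to G(A)/Z(\rho(\Gamma))$, uniquely determined by $\rho$, satisfying $\rho(x)^{-1}\rho(xy) = \phi(x)\rho(y)\phi(x)^{-1}$ for any $x,y \in \Gamma$. The preceding lemma identifies $Z(\rho(\Gamma)) = Z(A)$. Since $Z$ is smooth over $\cO$, the short exact sequence of fppf sheaves $1 \to Z \to G \to G^{\ad} \to 1$ gives an inclusion $G(A)/Z(A) \hookrightarrow G^{\ad}(A)$, so $\phi$ canonically lifts to a uniquely determined group homomorphism $\phi \colon \Gamma \to G^{\ad}(A)$ satisfying the desired conjugation formula.

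Next, to see that $\phi$ takes values in $\ker(G^{\ad}(A) \to G^{\ad}(k))$, I would reduce modulo $\m_A$. The reduction $\bar\phi \colon \Gamma \to G^{\ad}(k)$ still satisfies the quasi-homomorphism relation for $\bar\rho$. But $\bar\rho$ is an honest group homomorphism, so the same relation is trivially satisfied with the constant map $\phi \equiv 1$. Applying \lemref{quasi-hom} together with the preceding lemma to $\bar\rho$ itself yields uniqueness of the associated map modulo $Z(\bar\rho(\Gamma)) = Z(k)$, so $\bar\phi$ and the trivial map agree inside $G(k)/Z(k) \hookrightarrow G^{\ad}(k)$. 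Hence $\bar\phi$ is trivial and $\phi(\Gamma) \subset \ker(G^{\ad}(A) \to G^{\ad}(k))$.

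I do not expect any substantive obstacle. The only mildly delicate point is the injection $G(A)/Z(A) \hookrightarrow G^{\ad}(A)$, which is a routine consequence of the smoothness of $Z$ over $\cO$, and the uniqueness of $\phi$ in the prescribed target is then inherited directly from that of \lemref{quasi-hom}, since its ambiguity by $Z(A)$ dies in $G^{\ad}(A)$.
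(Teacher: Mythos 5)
Your proof is correct and follows essentially the same route as the paper's: combine \lemref{quasi-hom} with the preceding lemma to pin down $\phi$ modulo $Z(A)$, then reduce modulo $\m_A$ and use that $\bar\rho$ is an honest homomorphism to force $\bar\phi$ to be trivial. The only stylistic difference is that you phrase the triviality of $\bar\phi$ as a uniqueness statement (both $\bar\phi$ and the constant map $1$ witness the quasi-homomorphism relation for $\bar\rho$), whereas the paper phrases it as $\bar\phi(x)$ centralizing $\bar\rho(\Gamma)$; these are the same observation, and your explicit remark on the injection $G(A)/Z(A)\hookrightarrow G^{\ad}(A)$ is a harmless extra precision.
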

	
	\begin{proof}
		By combining the above lemma with \lemref{quasi-hom}, we see $\phi\colon \Gamma \to G^{\ad}(A)$ is uniquely determined.  Since $\bar\rho$ is a group homomoprhism, $\phi \mod \m_{A}$ commutes with $\bar\rho(\Gamma)$, and hence $\phi \mod \m_{A}$ is trivial.
	\end{proof}
	
	Now we can characterize $\aDef_{\bar f}(A)$ in terms of quasi-lifts.  The following propostion owing to \cite{BHKT19} plays a crucial role (see also its use in the proof of \cite[Theorem 4.10]{BHKT19}):
	
	\begin{pro}\label{BHKT key}
		Suppose that $X$ is an integral affine smooth $\O$-scheme on which $G$ acts.  Let $\underline{x} = (x_{1}, \dots, x_{n}) \in X(k)$ be a point with $G_{k} \cdot x$ closed, and $Z_{G_{k}}(\underline{x})$ scheme-theoretically trivial.  We write $X^{\wedge,\underline{x}}$ for the functor $\Art_{\O} \to \Set$ which sends $A$ to the set of pre-images of $\underline{x}$ under $X(A) \to X(k)$, and write $G^{\wedge}$ for the functor $\Art_{\O} \to \Set$ which sends $A$ to $\ker(G(A) \to G(k))$.  Then 
		\begin{enumerate}
			\item The $G^{\wedge}$-action on $X^{\wedge,\underline{x}}$ is free on $A$-points for any $A\in \Art_{\O}$.
			\item Let $X\dslash G = \Spec \O[X]^{G}$, let $\pi\colon X \to X \dslash G$ be the natural map, and let $(X\dslash G)^{\wedge,\pi(\underline{x})}$ be the functor $\Art_{\O} \to \Set$ which sends $A$ to the set of pre-images of $\pi(\underline{x})$ under $(X\dslash G)(A) \to (X\dslash G)(k)$.  Then $\pi\colon X \to X \dslash G$ induces an isomorphism $X^{\wedge,\underline{x}} /G \cong (X\dslash G)^{\wedge,\pi(\underline{x})}.$
		\end{enumerate}
	\end{pro}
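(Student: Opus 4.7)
The plan is to prove both assertions simultaneously by induction on the length of $A\in\Art_{\O}$, exploiting the smoothness of $G$ and $X$ over $\O$ together with the vanishing, at the Lie-algebra level, of the stabilizer $Z_{G_k}(\underline{x})$. At each inductive step I would work with a small extension $0\to I\to A\to A^{\prime}\to 0$ satisfying $\m_A\cdot I=0$, which linearises the obstructions to statements about $k$-vector spaces tensored with $I$.

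For Part (1), the base case $A=k$ is immediate since $G^{\wedge}(k)=\{1\}$. For the inductive step, given $g\in G^{\wedge}(A)$ and $y\in X^{\wedge,\underline{x}}(A)$ with $g\cdot y=y$, the inductive hypothesis forces the image of $g$ in $G(A^{\prime})$ to be trivial, so by smoothness of $G$ I may write $g=\exp(\xi)$ for some $\xi\in\g\otimes_k I$. Reducing $g\cdot y=y$ to first order then yields $d\mu_{\underline{x}}(\xi)=0$, where $\mu_{\underline{x}}\colon G\to X$, $g\mapsto g\cdot \underline{x}$ is the orbit map. The kernel of $d\mu_{\underline{x}}\colon \g_k\to T_{\underline{x}}(X_k)$ is the Lie algebra of $Z_{G_k}(\underline{x})$, which vanishes by hypothesis; hence $\xi=0$ and $g=1$.

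For Part (2), the main obstacle is to verify that $\pi\colon X\to X\dslash G$ is smooth at $\underline{x}$. The hypotheses (smoothness of $X$ and $G$, closedness of $G_k\cdot \underline{x}$, and scheme-theoretic triviality of the stabilizer) are precisely the input to Luna's \'etale slice theorem, which produces an affine smooth slice $Y\hookrightarrow X$ through $\underline{x}$ with $X\cong G\times Y$ \'etale-locally around the orbit and $X\dslash G\cong Y$, whence $\pi$ is smooth at $\underline{x}$. Granted this, the natural map $X^{\wedge,\underline{x}}/G^{\wedge}\to (X\dslash G)^{\wedge,\pi(\underline{x})}$ is surjective by formally-smooth lifting of $A^{\prime}$-points to $A$-points along small extensions, and injective by a second induction: reducing modulo $I$ and translating $y_1$ by a lift of the $G^{\wedge}(A^{\prime})$-element equalising $y_1^{\prime}$ and $y_2^{\prime}$ (whose uniqueness comes from Part (1)), we reduce to $y_1^{\prime}=y_2^{\prime}$; the difference $y_2-y_1$ then corresponds to an element of $T_{\underline{x}}(X)\otimes_k I$ lying in $\ker d\pi$, which by smoothness of $\pi$ equals the image of $d\mu_{\underline{x}}\colon \g\otimes_k I\to T_{\underline{x}}(X)\otimes_k I$, yielding $\xi\in\g\otimes_k I$ with $\exp(\xi)\cdot y_1=y_2$.
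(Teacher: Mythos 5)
The paper discharges this proposition by citing \cite[Proposition 3.13]{BHKT19} outright, so there is no in-paper argument to compare against; your sketch does, however, reconstruct the broad strategy used there, namely a tangent-space computation for freeness and an \'etale slice theorem to control $\pi\colon X\to X\dslash G$ near $\underline{x}$. Your Part (1) is clean and correct: the kernel of $d\mu_{\underline{x}}\colon\g_k\to T_{\underline{x}}X_k$ is $\Lie Z_{G_k}(\underline{x})$, and it is the \emph{scheme-theoretic} triviality of the stabilizer --- not merely triviality of its $k$-points --- which forces this Lie algebra to vanish, so you use the hypothesis in exactly the right place.

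The genuine gap is in Part (2), where you invoke ``Luna's \'etale slice theorem.'' In this paper $k$ has characteristic $p>0$ and the base is the mixed-characteristic DVR $\O$; the classical Luna theorem is a characteristic-zero statement over an algebraically closed field and does not apply as written. What is actually needed, and what \cite{BHKT19} use, is a positive-characteristic slice theorem in the spirit of Bardsley--Richardson (or, in modern language, Alper--Hall--Rydh); its hypotheses are met here --- closed orbit, smooth affine $X$, and, decisively, a scheme-theoretically trivial (hence smooth and linearly reductive) stabilizer, which removes the separability and linear-reductivity obstructions that make slice theory delicate in characteristic $p$ --- but you should invoke that theorem, not Luna's. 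Even granting the slice, a second issue remains unaddressed: the slice theorem lives over $k$, whereas your surjectivity argument (``formally-smooth lifting of $A'$-points to $A$-points'') needs $\pi$ to be formally smooth as an $\O$-morphism at $\underline{x}$. This requires knowing that formation of $\O[X]^{G}$ behaves well under reduction mod $\varpi$ near $\pi(\underline{x})$ --- taking $G$-invariants does not in general commute with base change --- and your sketch passes over this silently. Finally, the identification $\ker d\pi_{\underline{x}}=\im\,d\mu_{\underline{x}}$ in your injectivity step needs not just smoothness of $\pi$ but also the fact that, near $\underline{x}$, the scheme-theoretic fiber of $\pi$ over $\pi(\underline{x})$ coincides with the orbit $G\cdot\underline{x}$; this too follows from the slice theorem but is a distinct assertion from smoothness and should be stated.
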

	
	\begin{proof}
		See \cite[Proposition 3.13]{BHKT19}.
	\end{proof}
	
	\begin{cor}
		If $(\gamma_{1},\dots, \gamma_{m})$ is a tuple in $\Gamma^{m}$ such that $(\bar\rho(\gamma_{1}), \dots, \bar\rho(\gamma_{m}))$ has a closed orbit and a scheme-theoretically trivial centralizer in $G_{k}^{\ad}$, then $(\bar\rho(\gamma_{1}), \dots, \bar\rho(\gamma_{m}))$ has a lift $(g_{1}, \dots, g_{m}) \in G(A)^{m}$ which is a pre-image of $f(\gamma_{1},\dots, \gamma_{m})\in \bar{B}G(A)_{m}$, and any other choice is conjugated to this one by a unique element of $G^{\ad}(A)$.
	\end{cor}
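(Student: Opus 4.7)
The plan is to deduce the corollary as a direct application of Proposition \ref{BHKT key} to the affine smooth $\O$-scheme $X = G^m$, equipped with the diagonal conjugation action. Since the center $Z$ is assumed smooth and acts trivially on $G^m$ by conjugation, this action descends to an action of $G^{\ad} = G/Z$, which is itself a split connected reductive group scheme over $\O$. Thus the hypotheses of the proposition are met with $G$ replaced by $G^{\ad}$, applied to the point $\underline{\bar g} = (\bar\rho(\gamma_{1}), \dots, \bar\rho(\gamma_{m})) \in X(k)$: its $G_{k}^{\ad}$-orbit is closed by assumption, and $Z_{G_{k}^{\ad}}(\underline{\bar g})$ is scheme-theoretically trivial, again by hypothesis.

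Next I would identify the categorical quotient $X \dslash G^{\ad}$ with $G^{m} \dslash G = \Spec \O_{N_{m}G}^{\ad G}$. This is because $Z$ acts trivially, so $\O[G^{m}]^{G^{\ad}} = \O[G^{m}]^{G} = \O_{N_{m}G}^{\ad G}$. Hence $(X \dslash G^{\ad})(A) = \Hom_{\Alg_{\O}}(\O_{N_{m}G}^{\ad G}, A) = \bar{B}G(A)_{m}$. Under this identification, since the ambient element $f$ is an element of $\aDef_{\bar f}(A)$, the point $f(\gamma_{1}, \dots, \gamma_{m}) \in \bar{B}G(A)_{m}$ reduces modulo $\m_{A}$ to $\bar f(\gamma_{1}, \dots, \gamma_{m}) = \pi(\underline{\bar g})$, and therefore lies in $(X \dslash G^{\ad})^{\wedge, \pi(\underline{\bar g})}(A)$.

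For existence, part (2) of the proposition yields an isomorphism $X^{\wedge, \underline{\bar g}}/G^{\ad, \wedge} \cong (X \dslash G^{\ad})^{\wedge, \pi(\underline{\bar g})}$ on $A$-points, so $f(\gamma_{1}, \dots, \gamma_{m})$ is hit by some tuple $(g_{1}, \dots, g_{m}) \in G(A)^{m}$ lifting $\underline{\bar g}$. For uniqueness, any two such lifts differ by an element of $G^{\ad, \wedge}(A)$, which is unique by part (1) of the proposition. Finally, one upgrades uniqueness from $G^{\ad, \wedge}(A)$ to $G^{\ad}(A)$: any element of $G^{\ad}(A)$ conjugating two such lifts must, modulo $\m_{A}$, centralize $\underline{\bar g}$; but $Z_{G_{k}^{\ad}}(\underline{\bar g})$ is scheme-theoretically trivial, so this conjugating element already lies in $G^{\ad, \wedge}(A)$.

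The main obstacle is simply the bookkeeping: checking that Proposition \ref{BHKT key} applies with $G^{\ad}$ in place of $G$, identifying $G^{m} \dslash G^{\ad}$ with $G^{m} \dslash G$, and matching the latter's $A$-points with $\bar{B}G(A)_{m}$. Once these identifications are in hand, both existence and uniqueness follow immediately from the two parts of Proposition \ref{BHKT key}, and the centralizer triviality promotes $G^{\ad, \wedge}(A)$-uniqueness to $G^{\ad}(A)$-uniqueness.
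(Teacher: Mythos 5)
Your proof is correct and, since the paper states this as an immediate consequence of Proposition~\ref{BHKT key} without supplying its own argument, your write-up supplies precisely the intended bookkeeping: apply the proposition with $X=G^m$ under conjugation by $G^{\ad}$ (possible because $Z$ is smooth, so $G^{\ad}$ is again a split connected reductive group scheme over $\O$ and the conjugation action factors through it), identify $G^m\dslash G^{\ad}$ with $\Spec\O_{N_mG}^{\ad G}$ and hence its $A$-points with $\bar BG(A)_m$, and read off existence from part (2) and uniqueness from part (1) plus the triviality of $Z_{G_k^{\ad}}(\underline{\bar g})$, which forces any conjugating element of $G^{\ad}(A)$ into $G^{\ad,\wedge}(A)$. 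This is the same application used in the proof of \cite[Theorem 4.10]{BHKT19}, so no comparison of alternative routes is warranted.
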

	
	\begin{thm}\label{aDef}
		Let $A\in \Art_{\O}$.  Then $\aDef_{\bar f}(A)$ is isomorphic to the set of $\widehat{G}(A)$-conjugacy classes of quasi-lifts of $\bar\rho$.
	\end{thm}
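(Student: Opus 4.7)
The plan is to construct mutually inverse maps between $\aDef_{\bar f}(A)$ and the set of $\widehat{G}(A)$-conjugacy classes of quasi-lifts of $\bar\rho$, following the strategy of \cite[Theorem 4.10]{BHKT19} but in the present simplicial framework.

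\emph{From quasi-lifts to $\aDef_{\bar f}(A)$.} Given a quasi-lift $\rho\colon \Gamma \to G(A)$, I will send it to $f_\rho\colon B\Gamma \to \bar{B}G(A)$ defined on $n$-simplices by sending $(\gamma_1,\ldots,\gamma_n)$ to the class in $(G^n\dslash G)(A)$ of the tuple $(\rho(\prod_{j<i}\gamma_j)^{-1}\rho(\prod_{j\leq i}\gamma_j))_{i=1,\ldots,n}$. Compatibility with the inner face maps $d_i$ ($0<i<n$) and with $d_n$ is automatic from associativity of multiplication in $\Gamma$; for $d_0$, the quasi-lift relation implies that the two resulting tuples differ by simultaneous diagonal conjugation by $\phi(\gamma_1)$, hence coincide in the GIT quotient. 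Degeneracies are immediate, and the whole construction is invariant under $\widehat{G}(A)$-conjugation of $\rho$.

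\emph{From $\aDef_{\bar f}(A)$ to quasi-lifts.} Given $f \in \aDef_{\bar f}(A)$, I will rigidify by fixing a tuple $\underline\delta = (\delta_1,\ldots,\delta_n) \in \Gamma^n$ whose residual image $(\bar\rho(\delta_1),\ldots,\bar\rho(\delta_n))$ has closed $G_k$-orbit and scheme-theoretically trivial centralizer in $G_k^{\ad}$. Such a $\underline\delta$ exists by the minimality argument of the algebraically closed case (the previous subsection), applicable since $\bar\rho$ is absolutely $G$-completely reducible and $H^0(\Gamma,\g_k)=\z_k$. By the corollary to \propref{BHKT key} applied to $X=G^n$, I obtain a lift $(h_1,\ldots,h_n) \in G(A)^n$ of this residual tuple which is a preimage of $f(\delta_1,\ldots,\delta_n)$, unique up to the free action of $\widehat{G^{\ad}}(A)$. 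Fix such a choice. For each $\gamma \in \Gamma$ the augmented tuple $(\bar\rho(\delta_1),\ldots,\bar\rho(\delta_n),\bar\rho(\gamma))$ retains closed orbit and trivial centralizer, so the corollary again yields a unique $\rho(\gamma) \in G(A)$ such that $(h_1,\ldots,h_n,\rho(\gamma))$ is a preimage of $f(\delta_1,\ldots,\delta_n,\gamma)$. This defines a set-theoretic lift $\rho\colon \Gamma \to G(A)$ of $\bar\rho$.

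To verify that $\rho$ is a quasi-lift, I will transport the algebraically closed case argument: for $x,y \in \Gamma$, apply the various face maps to $(\delta_1,\ldots,\delta_n,x,\delta_1,\ldots,\delta_n,y) \in \Gamma^{2n+2}$ and compare images in $(G^m\dslash G)(A)$ via the simplicial identities on $f$; the uniqueness clause of \propref{BHKT key} extracts a well-defined $\phi(x) \in \widehat{G^{\ad}}(A)$ realizing $\rho(x)^{-1}\rho(xy) = \phi(x)\rho(y)\phi(x)^{-1}$. That the two constructions are mutually inverse is then formal: starting from $\rho$, one recovers it from $f_\rho$ up to the choice of $(h_1,\ldots,h_n)$; altering this choice by $\widehat{G^{\ad}}(A)$ translates into a $\widehat{G}(A)$-conjugation of $\rho$, via the central extension $1 \to Z \to G \to G^{\ad} \to 1$ and the centralizer computation of the lemma showing $Z(\rho(\Gamma))=Z(A)$. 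The main obstacle is the quasi-lift verification, which requires checking that the $\phi(x)$ produced from different pairs of face maps is coherent — this coherence rests on the single global choice of the rigidifying lift $(h_1,\ldots,h_n)$. A secondary subtlety is the descent from $\widehat{G^{\ad}}(A)$- to $\widehat{G}(A)$-conjugacy, which is precisely where the hypothesis $H^0(\Gamma,\g_k)=\z_k$ enters.
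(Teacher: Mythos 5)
Your proposal follows the paper's proof in all essential respects: both directions, the rigidification via a tuple $\underline{\delta}$ with trivial residual centralizer, the use of the BHKT lifting proposition, the verification of the quasi-lift relation via the long tuple $(\delta_1,\ldots,\delta_n,x,\delta_1,\ldots,\delta_n,y)$, and the descent from $G^{\ad}$- to $G$-conjugacy. Two small corrections of detail: in the Artinian setting the paper does not invoke the minimality/maximality argument from the algebraically closed case to produce $\underline{\delta}$ --- since $\bar\rho$ is already a genuine group homomorphism with finite image, one simply takes $\delta_1,\ldots,\delta_n$ so that $\bar\rho(\delta_1),\ldots,\bar\rho(\delta_n)$ generate $\bar\rho(\Gamma)$, and then $H^0(\Gamma,\g_k)=\z_k$ directly gives the required scheme-theoretic triviality of the centralizer in $G_k^{\ad}$ (together with closed orbit via Bate--Martin--R\"ohrle). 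Also, the role of $H^0(\Gamma,\g_k)=\z_k$ is not precisely the descent from $\widehat{G^{\ad}}(A)$ to $\widehat{G}(A)$-conjugacy (that descent is supplied by smoothness of $Z$ and hence surjectivity of $\widehat{G}(A)\to\widehat{G^{\ad}}(A)$); rather, the hypothesis ensures triviality of the centralizer of $(\bar h_i)$, which is what makes the BHKT proposition applicable and forces the conjugating element to reduce into $Z(k)$, after which one corrects by an element of $Z(A)$ to land in $\widehat{G}(A)$.
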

	
	\begin{proof}
		Given a quasi-lift $\rho \colon \Gamma \to G(A)$, then the association $(\gamma_{1},\dots, \gamma_{m}) \mapsto (\rho(\prod_{j=1}^{i-1}\gamma_{j})^{-1}\rho(\prod_{j=1}^{i}\gamma_{j}))_{i=1,\dots,m}$ defines an element of $\aDef_{\bar f}(A)$.
		
		In the following, we will construct a quasi-lift from a given $f \in \aDef_{\bar f}(A)$.
		
		Let $n\geq 1 $ be sufficiently large and choose $\delta_{1},\dots, \delta_{n}\in \Gamma$ such that $(\bar{h}_{1} = \bar\rho(\delta_{1}), \dots, \bar{h}_{n} = \bar\rho(\delta_{n}))$ is a system of generators of $\bar{\rho}(\Gamma)$, then the tuple $(\bar{h}_{1},\dots, \bar{h}_{n})$ has a scheme-theoretically trivial centralizer in $G_{k}^{\ad}$.  By \cite[Corollary 3.7]{BMR05}, the absolutely $G$-completely reducibility implies that the tuple $(\bar{h}_{1},\dots, \bar{h}_{n})$ has a closed orbit.  By the above corollary, we can choose a lift $(h_{1},\dots, h_{n}) \in G(A)^{n}$ of $(\bar{h}_{1},\dots, \bar{h}_{n})$ which is at the same time a pre-image of $f(\delta_{1},\dots, \delta_{n})$.  
		
		For any $\gamma \in \Gamma$, the tuple $(\bar{h}_{1},\dots, \bar{h}_{n}, \bar\rho(\gamma))$ obviously has a closed orbit and a trivial centralizer in $G_{k}^{\ad}$, so we can choose a tuple in $G(A)^{n+1}$ which lifts $(\bar{h}_{1},\dots, \bar{h}_{n}, \bar\rho(\gamma))$ and is a pre-image of $f(\delta_{1},\dots, \delta_{n},\gamma)$.  For this tuple, the first $n$ elements are conjugated to $(h_{1},\dots, h_{n})$ by a unique element of $G^{\ad}(A)$, so there is a unique $g\in G(A)$ such that the tuple is conjugated to $(h_{1}, \dots, h_{n}, g)$.  We define $\rho(\gamma)$ to be this $g$.  It follows immediately that $\rho \mod \m_{A} = \bar{\rho}$.
		
		Now suppose $\gamma_{1},\dots, \gamma_{m}\in \Gamma$.  As above, let $(g_{1},\dots, g_{m})$ be the unique tuple such that $(h_{1},\dots, h_{n}, g_{1},\dots, g_{m})$ lifts $(\bar{h}_{1},\dots, \bar{h}_{n}, \bar\rho(\gamma_{1}), \dots, \bar\rho(\gamma_{m}))$ and is a pre-image of $f(\delta_{1},\dots, \delta_{n},\gamma_{1},\dots, \gamma_{m})$, consider the following diagram, where the horizontal arrows are compositions of face maps:
		\begin{displaymath}
		\xymatrix{
			(\delta_{1}, \dots, \delta_{n},\gamma_{1}, \dots, \gamma_{m})\ar[r]\ar[d]  & (h_{1},\dots, h_{n}, g_{1},\dots, g_{m}) \ar[d] \\
			(\delta_{1}, \dots, \delta_{n},\prod_{j=1}^{i}\gamma_{j}) \ar[r] & (h_{1},\dots, h_{n}, \prod_{j=1}^{i}g_{j})
		}
		\end{displaymath}
		Then $(h_{1},\dots, h_{n}, \prod_{j=1}^{i}g_{j})$ lifts $(\bar{h}_{1},\dots, \bar{h}_{n}, \bar\rho(\prod_{j=1}^{i}\gamma_{j}))$ and is a pre-image of $f(\delta_{1}, \dots, \delta_{n},\prod_{j=1}^{i}\gamma_{j})$.  Hence $\prod_{j=1}^{i} g_{j} = \rho(\prod_{j=1}^{i}\gamma_{j})$, and $g_{i} = \rho(\prod_{j=1}^{i-1}\gamma_{j})^{-1}\rho(\prod_{j=1}^{i}\gamma_{j})$ $(\forall i=1,\dots,m)$.
		
		Let $x,y\in\Gamma$.  Then the element in $G(A)^{2n+2}$ associated to $(\delta_{1},\dots, \delta_{n},x, \delta_{1},\dots, \delta_{n},y)$ is $$(h_{1},\dots, h_{n}, \rho(x), \rho(x)^{-1}\rho(x\delta_{1}),\dots, \rho(x\prod_{j=1}^{n-1}\delta_{j})^{-1}\rho(x\prod_{j=1}^{n}\delta_{j}), \rho(x\prod_{j=1}^{n}\delta_{j})^{-1}\rho(x\prod_{j=1}^{n}\delta_{j} \cdot y)),$$ and the element in $G(A)^{2n+1}$ associated to $(\delta_{1},\dots, \delta_{n},\delta_{1},\dots, \delta_{n},y)$ is $$(h_{1},\dots, h_{n},\rho(\delta_{1}),\dots, \rho(\prod_{j=1}^{n-1}\delta_{j})^{-1}\rho(\prod_{j=1}^{n}\delta_{j}), \rho(\prod_{j=1}^{n}\delta_{j})^{-1}\rho(\prod_{j=1}^{n}\delta_{j} \cdot y)).$$  We see both $ (\rho(x\prod_{j=1}^{i-1}\delta_{j})^{-1}\rho(x\prod_{j=1}^{i}\delta_{j}))_{i=1,\dots,n}$ and $ (\rho(\prod_{j=1}^{i-1}\delta_{j})^{-1}\rho(\prod_{j=1}^{i}\delta_{j}))_{i=1,\dots,n}$ are lifts of $(\bar{h}_{1},\dots, \bar{h}_{n})$ and pre-images of $f(\delta_{1},\dots, \delta_{n})$, so they are conjugated by some $\phi(x) \in G(A)$.  We can even suppose $\phi(x) \in \ker(G(A) \to G(k))$ because the centralizer of $(\bar{h}_{1},\dots, \bar{h}_{n})$ is $Z$.  Since $\phi(x)$ is uniquely determined modulo $Z(A)$, it must conjugate $\rho(\prod_{j=1}^{n}\delta_{j})^{-1}\rho(\prod_{j=1}^{n}\delta_{j} \cdot y)$ to $\rho(x\prod_{j=1}^{n}\delta_{j})^{-1}\rho(x\prod_{j=1}^{n}\delta_{j} \cdot y)$.  We deduce that $\forall x,y\in \Gamma$, $\rho(x)^{-1}\rho(xy) = \phi(x) \rho(y) \phi(x)^{-1}$, and $\rho$ is a quasi-lift.
		
		For the $\rho$ constructed as above, we can recover $f$ from the formula $(\gamma_{1},\dots, \gamma_{m}) \mapsto (\rho(\prod_{j=1}^{i-1}\gamma_{j})^{-1}\rho(\prod_{j=1}^{i}\gamma_{j}))_{i=1,\dots,m}$.
		
		So it remains to prove that if $\rho_{1}$ and $\rho_{2}$ have the same image in $\aDef_{\bar f}(A)$, then they are equal modulo $\ker(G(A) \to G(k))$-conjugation.  Since $(\rho_{1}(\prod_{j=1}^{i-1}\delta_{j})^{-1}\rho_{1}(\prod_{j=1}^{i}\delta_{j}))_{i=1,\dots, n}$ and $(\rho_{2}(\prod_{j=1}^{i-1}\delta_{j})^{-1}\rho_{2}(\prod_{j=1}^{i}\delta_{j}))_{i=1,\dots, n}$ are both lifts of $(\bar{h}_{1},\dots, \bar{h}_{n})$ and pre-images of $f(\delta_{1},\dots, \delta_{n})$, they are conjugated by some $g\in G(A)$, and we may choose $g\in \ker(G(A) \to G(k))$ because the centralizer of $(\bar{h}_{1},\dots, \bar{h}_{n})$ is $Z$.  After conjugation by $g$, we may suppose $(\rho_{1}(\prod_{j=1}^{i-1}\delta_{j})^{-1}\rho_{1}(\prod_{j=1}^{i}\delta_{j}))_{i=1,\dots, n} = (\rho_{2}(\prod_{j=1}^{i-1}\delta_{j})^{-1}\rho_{2}(\prod_{j=1}^{i}\delta_{j}))_{i=1,\dots, n} = (h_{1}^{\prime}, \dots, h_{n}^{\prime})$.  Then for $\gamma\in \Gamma$, $\rho_{k}(\prod_{j=1}^{n}\delta_{j})^{-1}\rho_{k}(\prod_{j=1}^{n}\delta_{j}\cdot \gamma)$ ($k=1,2$) is uniquely determined by the condition: $(h_{1}^{\prime}, \dots, h_{n}^{\prime}, \rho_{k}(\prod_{j=1}^{n}\delta_{j})^{-1}\rho_{k}(\prod_{j=1}^{n}\delta_{j}\cdot \gamma))$ lifts $(\bar{h}_{1},\dots, \bar{h}_{n}, \bar\rho(\gamma))$ and is a pre-image of $f(\delta_{1},\dots, \delta_{n}, \gamma)$.  In consequence, we have $\rho_{1} = \rho_{2}$.
	\end{proof}
	
	For $A\in \Art_{\O}$, let $\aDef_{\bar f,c}(A)$ be the subset of $\aDef_{\bar f}(A)$ consisting of $f\colon B\Gamma \to \bar{B}G(A)$ which factorizes through some finite quotient of $\Gamma$.  In fact we have $\aDef_{\bar f,c}(A) = \Hom_{\sSet _{/ \bar{B}G(k)}}(X, \bar{B}G(A))$ (recall that $X$ is the pro-simplicial set $(B\Gamma_{i})_{i}$).  The following corollary is obvious:
	
	\begin{cor}
		Let $A\in \Art_{\O}$.  Then $\aDef_{\bar f,c}(A)$ is isomorphic to the set of $\widehat{G}$-conjugacy classes of continuous quasi-lifts of $\bar\rho$.
	\end{cor}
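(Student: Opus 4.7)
The plan is to derive this directly from \thmref{aDef} by passing to the filtered colimit over the finite quotients $\Gamma_i$ of $\Gamma$. Since $A \in \Art_\cO$ has finite residue field $k$ and nilpotent maximal ideal, $A$ itself is a finite set, so $G(A)$ is a finite group; consequently any continuous map $\Gamma \to G(A)$ factors through some $\Gamma_i$. By the definition $\Hom_{\sSet}(X, -) = \varinjlim_i \Hom_{\sSet}(B\Gamma_i, -)$ recalled earlier, and by the exactness of filtered colimits of sets, the set $\aDef_{\bar f, c}(A) = \Hom_{\sSet _{/ \bar{B}G(k)}}(X, \bar{B}G(A))$ identifies with $\varinjlim_i \aDef_{\bar f_i}(A)$, where $\bar f_i \colon B\Gamma_i \to \bar{B}G(k)$ is the map induced by the factorization $\bar\rho_i \colon \Gamma_i \to G(k)$ of $\bar\rho$ (which exists for $i$ large).

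For $i$ sufficiently large so that $\bar\rho_i(\Gamma_i) = \bar\rho(\Gamma)$, the hypotheses of \thmref{aDef} transfer to $(\Gamma_i, \bar\rho_i)$: indeed $\bar\rho_i$ remains absolutely $G$-completely reducible because its image coincides with that of $\bar\rho$, and $H^{0}(\Gamma_i, \g_k) = H^{0}(\bar\rho(\Gamma), \g_k) = \z_k$. The theorem then provides, for each such $i$, a bijection between $\aDef_{\bar f_i}(A)$ and the set of $\widehat{G}(A)$-conjugacy classes of quasi-lifts $\rho_i \colon \Gamma_i \to G(A)$ of $\bar\rho_i$.

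Taking the filtered colimit in $i$, the left-hand side becomes $\aDef_{\bar f, c}(A)$ by construction, while the right-hand side becomes the set of $\widehat{G}(A)$-conjugacy classes of compatible families $(\rho_i)_i$, i.e., of continuous quasi-lifts $\rho \colon \Gamma \to G(A)$. The one nontrivial point to verify is that the bijections of \thmref{aDef} are natural with respect to the surjections $\Gamma_j \twoheadrightarrow \Gamma_i$ in the inverse system; this is clear from the explicit construction in its proof, since the quasi-lift attached to $f_i$ depends only on the values of $f_i$ on tuples of group elements, and this assignment is manifestly functorial. The action by $\widehat{G}(A)$ poses no issue since $\widehat{G}(A)$ is itself a fixed finite group, so taking $\widehat{G}(A)$-conjugacy classes commutes with the filtered colimit. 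This naturality check is the main (and essentially only) step beyond \thmref{aDef}.
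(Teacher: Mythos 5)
Your proof is correct. The paper gives no argument at all here (it labels the corollary as ``obvious''); the intended reading is that the bijection of \thmref{aDef} between $\aDef_{\bar f}(A)$ and $\widehat{G}(A)$-conjugacy classes of quasi-lifts visibly restricts to the subsets ``$f$ factors through a finite quotient $\Gamma_i$'' and ``$\rho$ is continuous'': inspecting the construction in the proof of \thmref{aDef}, the value $\rho(\gamma)$ depends on $\gamma$ only through $f(\delta_1,\dots,\delta_n,\gamma)$ and $\bar\rho(\gamma)$, so if $f$ factors through $\Gamma_i$ (large enough that $\bar\rho$ also does) then so does $\rho$; the converse direction $\rho\mapsto f$ is the explicit formula, which clearly preserves factoring. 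You instead reorganize this as a filtered colimit: apply \thmref{aDef} at each finite level $\Gamma_i$, checking that the hypotheses (absolute $G$-complete reducibility, $H^0(\Gamma_i,\g_k)=\z_k$) descend because they only depend on the image $\bar\rho(\Gamma)=\bar\rho_i(\Gamma_i)$, and then pass to the colimit, using that $\aDef_{\bar f,c}(A)=\varinjlim_i\aDef_{\bar f_i}(A)$ by definition of $\Hom_{\sSet}(X,-)$ and that taking $\widehat{G}(A)$-conjugacy classes commutes with the filtered colimit since $\widehat{G}(A)$ is a fixed finite group. Both routes reach the same point; yours trades the one-line observation inside the proof of \thmref{aDef} for the bookkeeping of naturality and hypothesis propagation across the system $(\Gamma_i)$, which you handle correctly — though note you still end up opening the proof of \thmref{aDef} to justify naturality, so the net saving over the direct restriction argument is small.
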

	
	As a by-product of the proof of \thmref{aDef}, we also have:
	
	\begin{cor}
		For $A\in \Art_{\O}$, the set $\aDef_{\bar f}(A)$ (resp. $\aDef_{\bar f,c}(A)$) is isomorphic to $\Hom_{\cM}(B\Gamma, BG(A)/G^{\wedge}(A))$ (resp. $\Hom_{\cM}(X, BG(A)/G^{\wedge}(A))$).
	\end{cor}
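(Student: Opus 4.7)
The plan is to promote the identification of Theorem~\ref{aDef} to one of simplicial mapping spaces by checking that quasi-lifts give rise naturally to simplicial maps into $BG(A)/G^{\wedge}(A)$ and that this correspondence is bijective. As a preliminary observation, $G^{\wedge}(A)$ acts diagonally on $BG(A)_n = G(A)^n$ by conjugation, commuting with the face and degeneracy maps, so $BG(A)/G^{\wedge}(A)$ is a well-defined simplicial set. Since $G^{\wedge}(A)$ acts trivially on $BG(k)$, the structural projection $BG(A) \twoheadrightarrow BG(k)$ descends to a map $BG(A)/G^{\wedge}(A) \to BG(k)$, giving this quotient the structure of an object of $\cM$; and the canonical map $BG(A) \to \bar{B}G(A)$ factors through it, since at level $n$ the composite $G(A)^n \to (G^n \dslash G)(A)$ is $G^{\wedge}(A)$-invariant.

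Next, I would construct maps in both directions. Given a quasi-lift $\rho$ of $\bar\rho$, I define $\tilde f_{\rho} \colon B\Gamma \to BG(A)/G^{\wedge}(A)$ by
$$(\gamma_{1}, \dots, \gamma_{n}) \longmapsto \bigl[\bigl(\rho(\textstyle\prod_{j=1}^{i-1}\gamma_{j})^{-1} \rho(\prod_{j=1}^{i}\gamma_{j})\bigr)_{i=1,\dots,n}\bigr].$$
The only non-obvious verification is compatibility with $d_{0}$ (and its higher analogues), which amounts to the quasi-homomorphism relation $\rho(x)^{-1}\rho(xy) = \phi(x)\rho(y)\phi(x)^{-1}$; as in the proof of Theorem~\ref{aDef}, one can arrange $\phi(x) \in G^{\wedge}(A)$ (using the smoothness of $Z$ to lift from $\widehat{G^{\ad}}(A)$ to $\widehat{G}(A)$), so that this relation descends to a genuine equality in $BG(A)/G^{\wedge}(A)$. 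Since $\tilde f_{\rho}$ depends only on the $\widehat G(A)$-conjugacy class of $\rho$, Theorem~\ref{aDef} yields a well-defined map $\Phi \colon \aDef_{\bar f}(A) \to \Hom_{\cM}(B\Gamma, BG(A)/G^{\wedge}(A))$. In the reverse direction, post-composition with $BG(A)/G^{\wedge}(A) \to \bar{B}G(A)$ defines a map $\Psi \colon \Hom_{\cM}(B\Gamma, BG(A)/G^{\wedge}(A)) \to \aDef_{\bar f}(A)$.

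The main technical obstacle is to show that $\Phi$ and $\Psi$ are mutually inverse. That $\Psi \circ \Phi = \mathrm{id}$ is visible directly from the formula defining $\tilde f_{\rho}$. For the opposite composition, starting with $\tilde f$ and setting $f = \Psi(\tilde f)$, I would apply Theorem~\ref{aDef} to extract a quasi-lift $\rho$ from $f$: choose generators $\delta_{1}, \dots, \delta_{n} \in \Gamma$ such that $(\bar\rho(\delta_{i}))$ has closed orbit and scheme-theoretically trivial centralizer in $G^{\ad}_{k}$, then use the corollary to Proposition~\ref{BHKT key} to obtain a lift $(h_{1}, \dots, h_{n}) \in G(A)^n$ of $f(\delta_{1}, \dots, \delta_{n})$, unique up to $G^{\wedge}(A)$-conjugacy. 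The same uniqueness then forces $\tilde f$ to be entirely determined by its values on the extended tuples $(\delta_{1}, \dots, \delta_{n}, \gamma_{1}, \dots, \gamma_{m})$, and by the construction of $\rho$ these match $\tilde f_{\rho}$ term by term; hence $\tilde f = \tilde f_{\rho} = \Phi(f)$. The profinite statement for $\aDef_{\bar f, c}(A)$ and $X = (B\Gamma_{i})_{i}$ follows by passing to the filtered colimit over the finite quotients $\Gamma_{i}$ on both sides.
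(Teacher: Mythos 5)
Your proposal is correct and follows essentially the same route as the paper, which presents the corollary as a by-product of the proof of \thmref{aDef}: you have simply made explicit the bijection that the paper leaves implicit, defining $\Phi$ by the same formula $(\gamma_{1},\dots,\gamma_{n})\mapsto[(\rho(\prod_{j<i}\gamma_{j})^{-1}\rho(\prod_{j\le i}\gamma_{j}))_{i}]$ already used there, and running the extended-tuple uniqueness argument of \thmref{aDef} (via the corollary to \propref{BHKT key} together with smoothness of $Z$ to pass from $\ker(G^{\ad}(A)\to G^{\ad}(k))$ to $G^{\wedge}(A)$-conjugacy) one level higher, in $BG(A)/G^{\wedge}(A)$ instead of $\bar{B}G(A)$. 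The checks you single out (that $d_{0}$ is the only face map requiring the quotient, and that the value of $\tilde f$ on an extended tuple is pinned down by $f$) are exactly the points that need to be added to the paper's one-line justification.
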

	
	But unfortunately, the simplicial set $BG(A)/G^{\wedge}(A)$ isn't generally fibrant.
	
	We attempt to compare the difference between $\aDef_{\bar f,c}(A)$ and $\cD(A)$.  Motivated by the front-to-back duality in \cite[8.2.10]{Weib94}, we make the following definition.  Let the reflection action $r$ act on $B\Gamma$ and $\bar{B}G(A)$ as follows:
	\begin{enumerate}
		\item $r$ acts on $B\Gamma_{n} \cong \Gamma \times \dots \times \Gamma$ by $r(\gamma_{1},\dots, \gamma_{n}) = (\gamma_{n},\dots, \gamma_{1})$.
		\item $r$ acts on $\O_{N_{n}G}$ by $r(f)(g_{1},\dots, g_{n}) = f(g_{n},\dots, g_{1})$.  We see that $r$ preserves $\O_{N_{n}G}^{\ad G}$, hence $r$ acts on $\bar{B}G(A)_{n}$.
	\end{enumerate}
	
	\begin{de}
		For $A\in \Art_{\O}$, we define $\bDef_{\bar f}(A)$ (resp. $\bDef_{\bar f,c}(A)$) to be the subset of $\aDef_{\bar f}(A)$ (resp. $\aDef_{\bar f,c}(A)$) consisting of $f \colon B\Gamma \to \bar{B}G(A)$ which commutes with $r$.
	\end{de}
	
	\begin{thm}\label{bDef}
		Let $A\in \Art_{\O}$.  Suppose the characteristic of $k$ is not $2$.  Then $\bDef_{\bar f}(A)$ is in bijection with the set of group homomorphisms $\Gamma \to G(A)$ which lift $\bar\rho$, and $\bDef_{\bar f,c}(A)$ is in bijection with $\cD(A)$.
	\end{thm}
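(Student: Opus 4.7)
The forward direction is direct: if $\rho \colon \Gamma \to G(A)$ is an honest lift of $\bar\rho$, the map $f_\rho$ of \thmref{aDef} sends $(\gamma_1,\ldots,\gamma_n)$ to $[(\rho(\gamma_1),\ldots,\rho(\gamma_n))]$ in $(G^n \dslash G)(A)$, which manifestly commutes with the order-reversal $r$, so $f_\rho \in \bDef_{\bar f}(A)$. For the converse, given $f \in \bDef_{\bar f}(A)$, \thmref{aDef} yields a quasi-lift $\rho$ with associated homomorphism $\phi \colon \Gamma \to \ker(G^{\ad}(A) \to G^{\ad}(k))$ satisfying $\rho(xy) = \rho(x)\phi(x)\rho(y)\phi(x)^{-1}$; it suffices to show that the $r$-invariance forces $\phi = 1$.

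We induct on the length of $A$: for a small extension $A \twoheadrightarrow B$ with kernel $I$ killed by $\m_A$ (so that $I^2 = 0$), the inductive hypothesis gives $\phi|_B = 1$, and we write $\phi(\gamma) = 1 + \Phi(\gamma)$ modulo $I^2$, making $\Phi \colon \Gamma \to \g_k^{\ad} \otimes_k I$ an additive homomorphism. The decisive input is $r$-invariance on the diagonal pair $(\gamma,\gamma)$: using $\rho(\gamma^2) = \rho(\gamma)\phi(\gamma)\rho(\gamma)\phi(\gamma)^{-1}$, one computes
\[
f(\gamma,\gamma) = \bigl[(\rho(\gamma),\,\phi(\gamma)\rho(\gamma)\phi(\gamma)^{-1})\bigr],\quad r\cdot f(\gamma,\gamma) = \bigl[(\phi(\gamma)\rho(\gamma)\phi(\gamma)^{-1},\,\rho(\gamma))\bigr]
\]
in $(G^2 \dslash G)(A)$. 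Equality of these classes produces $h \in G(A)$ with both $h\phi(\gamma)$ and $\phi(\gamma)^{-1}h$ centralizing $\rho(\gamma)$ in $G^{\ad}(A)$; their product yields $\phi(\gamma)^2 \in Z_{G^{\ad}(A)}(\rho(\gamma))$. Expanding $(1+\Phi(\gamma))^2 \equiv 1 + 2\Phi(\gamma) \pmod{I^2}$ and extracting the commutator gives $2[\Phi(\gamma),\bar\rho(\gamma)] = 0$ in $\g_k^{\ad} \otimes I$; since $\Char k \neq 2$, $\Phi(\gamma)$ lies in $Z_{\g_k^{\ad}}(\bar\rho(\gamma)) \otimes I$ for every $\gamma$.

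Writing $\gamma^{\ast} := \Ad(\bar\rho(\gamma))$, we now have $\gamma^{\ast}\Phi(\gamma) = \Phi(\gamma)$ for all $\gamma$. Combining this with the additivity of $\Phi$ and the condition $(\delta_1\delta_2)^{\ast} \Phi(\delta_1\delta_2) = \Phi(\delta_1\delta_2)$ gives, after a short manipulation, $(\delta_2^{\ast} - 1)\Phi(\delta_1) = ((\delta_1^{-1})^{\ast} - 1)\Phi(\delta_2)$. Substituting $\delta_2 \mapsto \delta_2\delta_3$ and matching the two expansions of the left-hand side produces
\[
(\Ad(\bar\rho(\delta_2)) - 1)(\Ad(\bar\rho(\delta_3)) - 1)\Phi(\delta_1) = 0
\]
for all $\delta_1, \delta_2, \delta_3 \in \Gamma$. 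Hence $\Phi(\delta_1) \in \ker J^2$, where $J \subset \Z[\Gamma]$ is the augmentation ideal acting on $\g_k^{\ad} \otimes I$ via $\Ad \circ \bar\rho$. The setup assumption $H^0(\Gamma, \g_k) = \z_k$ implies $H^0(\Gamma, \g_k^{\ad}) = 0$, i.e., $\ker J = 0$; consequently $\ker J^2 = \{v : Jv \subset \ker J\} = 0$, so $\Phi = 0$, closing the induction.

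The continuous statement $\bDef_{\bar f,c}(A) \cong \cD(A)$ follows by running the same argument for $f$'s factoring through finite quotients $\Gamma_i$, which produce continuous honest lifts whose conjugacy classes constitute $\cD(A)$. The main subtlety is extracting the clean condition $\phi(\gamma)^2 \in Z_{G^{\ad}(A)}(\rho(\gamma))$ from a conjugacy-class equality in $(G^2 \dslash G)(A)$, since the conjugating element $h$ is only determined modulo the (possibly nontrivial) centralizer; the ensuing linearization and the passage from $\ker J^2$ to $\ker J$ are then formal. The hypothesis $\Char k \neq 2$ enters solely to invert the factor of $2$ arising from the linearized squaring $(1+\Phi)^2 = 1 + 2\Phi$.
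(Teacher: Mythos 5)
Your proposal has the right target (show the homomorphism $\phi$ attached to the quasi-lift is trivial, using $r$-invariance plus $\Char k\neq 2$), but the decisive step is not justified. You deduce from the equality $f(\gamma,\gamma)=r\cdot f(\gamma,\gamma)$ in $(G^{2}\dslash G)(A)$ that the two representing tuples $(\rho(\gamma),\phi(\gamma)\rho(\gamma)\phi(\gamma)^{-1})$ and its reversal are conjugate by some $h\in G(A)$. Equality of images in the GIT quotient only forces conjugacy under the hypotheses of \propref{BHKT key}, namely that the residual tuple has a closed orbit and a scheme-theoretically trivial centralizer in $G_k^{\ad}$ --- and the pair $(\bar\rho(\gamma),\bar\rho(\gamma))$ fails both in general (its centralizer is that of the single element $\bar\rho(\gamma)$). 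Two non-conjugate lifts can have the same image in $(G^{2}\dslash G)(A)$, so no such $h$ need exist. This is precisely why the paper never evaluates $f$ on bare tuples: it always pads with a palindromic generating tuple $(\delta_1,\dots,\delta_n)$ (chosen with $\delta_i=\delta_{n+1-i}$ and $\prod_j\delta_j=e$), for which the rigidity of \propref{BHKT key} applies. Even granting your $h$, the algebra is off: if $a=\phi(\gamma)^{-1}h$ and $b=h\phi(\gamma)$ both centralize $\rho(\gamma)$, their products give $h^2$, $\phi(\gamma)^{-1}h^2\phi(\gamma)$, or $h^{-1}\phi(\gamma)h\phi(\gamma)$ --- none of which is $\phi(\gamma)^2$ unless $h$ commutes with $\phi(\gamma)$, and $h$ is only pinned down modulo the (large) centralizer, a point you flag but do not resolve. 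Finally, your linearization needs $H^0(\Gamma,\g_k^{\ad}\otimes I)=0$; the standing hypothesis is $H^0(\Gamma,\g_k)=\z_k$, and since $\g_k^{\ad}=\g_k/\z_k$ the invariants of the quotient can be strictly larger (they map to $H^1(\Gamma,\z_k)$), so $\ker J=0$ is not automatic.

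For comparison, the paper's route sidesteps all of this: from $r$-equivariance applied to the padded palindromic tuples it extracts the two exact identities $\rho(x)=\rho(x^{-1})^{-1}$ and $\rho(x)^{-1}\rho(xy)=\rho(yx)\rho(x)^{-1}$, from which a purely group-theoretic manipulation gives that $\phi(x)^2$ centralizes $\rho(\Gamma)$, hence $\phi^2=1$ in $G^{\ad}(A)$ by $Z(\rho(\Gamma))=Z(A)$; since $\phi\equiv 1 \bmod \m_A$ and $p\neq 2$, this forces $\phi=1$ directly, with no small-extension induction and no linearized cohomological input. If you want to repair your argument, the minimal fix is to run your diagonal computation on tuples of the form $(\delta_1,\dots,\delta_n,x,\delta_1,\dots,\delta_n,y)$ so that the centralizer is trivial and $h$ is unique in $G^{\ad}(A)$ --- at which point you will essentially have reconstructed the paper's proof.
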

	
	\begin{proof}
		Let $f \in \bDef_{\bar f}(A)$.  It suffices to prove that the quasi-lift $\rho$ obtained in \thmref{aDef} is a group homomorphism.  We choose the tuple $(\delta_{1},\dots, \delta_{n})$ such that $\delta_{i}=\delta_{n+1-i}$ and $\prod_{j=1}^{n}\delta_{j}=e$.  Write $\rho$ for the quasi-lift constructed from this tuple as in \thmref{aDef}, note that the choice of $(\delta_{1},\dots, \delta_{n})$ only affects $\rho$ by some conjugation.  Let $\phi\colon \Gamma \to G(A)/Z(A)$ be the group homomorphism such that $\rho(xy) =\rho(x) \phi(x) \rho(y) \phi(x)^{-1}$ for any $x, y\in \Gamma$.  Note that $\phi(x) \mod \m_{A} = 1$ because $\bar\rho$ is a group homomorphism.  
		
		Since $f$ commutes with $r$, we have 
		\begin{enumerate}
			\item $\rho(x) = \rho(x^{-1})^{-1}$, $\forall x\in \Gamma$.
			\item $\rho(x)^{-1}\rho(xy) = \rho(yx)\rho(x)^{-1}$, $\forall x,y\in \Gamma$.
		\end{enumerate}
		By substituting (1) into $\rho(xy) =\rho(x) \phi(x) \rho(y) \phi(x)^{-1}$, we get $\rho(y^{-1}x^{-1})^{-1} = \rho(x^{-1})^{-1} \phi(x) \rho(y^{-1})^{-1} \phi(x)^{-1}$, then consider $(x,y) \mapsto (x^{-1}, y^{-1})$ and take the inverse, we get $\rho(yx) = \phi(x)^{-1} \rho(y) \phi(x) \rho(x)$.  Now (2) implies $\rho(xy)\rho(x) = \rho(x)\rho(yx)$, which in turn gives $$\rho(x) \phi(x) \rho(y) \phi(x)^{-1} \rho(x) =\rho(x) \phi(x)^{-1} \rho(y) \phi(x) \rho(x).$$  So $\phi(x)^{2}$ commutes with $\rho(\Gamma)$ for any $x\in \Gamma$, and $\phi^{2}=1$.  Since the characteristic of $k$ is not $2$ and $\phi(x) \mod \m_{A} = 1 \in G(k)/Z(k)$, we deduce $\phi= 1$ and $\rho$ is a group homomorphism.
	\end{proof}

	\subsection{Derived deformations of pseudo-characters}\label{derpseudochar}
	
	The functor $\aDef_{\bar f,c} = \Hom_{\sSet _{/ \bar{B}G(k)}}(X, \bar{B}G(-))$ is analogous to the functor $\cD^{\square} = \Hom_{\sSet _{/ BG(k)}}(X, BG(-))$, so it's natural to consider the function complex $\sHom_{\sSet _{/ \bar{B}G(k)}}(X, \bar{B}G(-))$ and then to extend the domain of definition to $\sArt$, as constructing the functor $s\cD\colon \sArt \to \sSet$.
	
	\begin{de}
		For $A\in \sArt$, we define $\barBG(A)$ to be the $\Ex^{\infty}$ of the diagonal of the bisimplicial set $$([p],[q])\mapsto  \Hom_{{}_{\O} \backslash \sCR}(c(\O_{N_{p}G}^{\ad G}),A^{\Delta[q]}),$$ and define $sa\cD(A) = \hofib_{\bar{f}} (\Hom_{\sSet}(X, \barBG(A)) \to \Hom_{\sSet}(X, \barBG(k))).$
	\end{de}
	
	If $A\in \Art_{\O}$, then the bisimplicial set $([p],[q])\mapsto  \Hom_{{}_{\O} \backslash \sCR}(c(\O_{N_{p}G}^{\ad G}),A^{\Delta[q]})$ doesn't depend on the index $q$, and each of its lines is isomorphic to $\Ex^{\infty}\bar{B}G(A)$.  Hence $\bar{f}$ can be regarded as an element of $\Hom_{\sSet}(X, \barBG(k))$.  As the derived deformation functors $s\cD$, we see that $sa\cD \colon \sArt \to \sSet$ is homotopy invariant.  
	
	Note that the inclusion $\O_{N_{\bullet}G}^{\ad G} \hookrightarrow \O_{N_{\bullet}G}$ induces a natural transformation $s\cD \to sa\cD$.  
	
	We would like to understand $\pi_{0} sa\cD(A)$.  Let's first analyse the case $A\in \Art_{\O}$.  For simplicity, we don't take the $\Ex^{\infty}$ here.  Since $BG(A) \to BG(k)$ is a fibration, $\sHom_{\sSet _{/ \bar{B}G(k)}}(X, \bar{B}G(A))$ is a good model for $s\cD(A)$.  However, if $\bar{B}G(A) \to \bar{B}G(k)$ is a not fibration, then $\sHom_{\sSet _{/ \bar{B}G(k)}}(X, \bar{B}G(A))$ is not weakly equivalent to $sa\cD(A)$. 
	
	We have the commutative diagram 
	\begin{displaymath}
	\xymatrix{
		\sHom_{\sSet _{/ \bar{B}G(k)}}(X, BG(A))_{0} \ar[r]\ar[d] & \sHom_{\sSet _{/ \bar{B}G(k)}}(X, \bar{B}G(A))_{0} \ar[d] \\
		\pi_{0}\sHom_{\sSet _{/ BG(k)}}(X, BG(A))  \ar[r] & \pi_{0}\sHom_{\sSet _{/ \bar{B}G(k)}}(X, \bar{B}G(A))}
	\end{displaymath}
	Note that $\pi_{0}sa\cD(A)$ is the coequalizer of $sa\cD(A)_{1} \rightrightarrows sa\cD(A)_{0} = \aDef_{\bar f,c}(A)$ by definition. 
	
	\begin{pro}
		The above diagram is naturally isomorphic to 
		\begin{displaymath}
		\xymatrix{
			\cD^{\square}(A) \ar[r]\ar[d] & \aDef_{\bar f,c}(A) \ar[d] \\
			\cD(A)  \ar[r]\ar@{-->}[ru] & \pi_{0}\sHom_{\sSet _{/ \bar{B}G(k)}}(X, \bar{B}G(A))}
		\end{displaymath}
		And there is a dotted arrow which make the diagram commutative, whose image is $\bDef_{\bar f,c}(A) \subseteq \aDef_{\bar f,c}(A)$.
	\end{pro}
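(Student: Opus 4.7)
I would start by identifying each corner of the source diagram with its target counterpart. The top-right is, by definition, $\aDef_{\bar f,c}(A)$ (using that $\sHom(-,-)_{0} = \Hom(-,-)$). The bottom-left is $\cD(A)$ by equation~\eqref{D(A)} of Section~\ref{subsectReform}. For the top-left, a map $X \to BG(A)$ is a compatible system $B\Gamma_{i} \to BG(A)$, equivalently a continuous group homomorphism $\rho\colon \Gamma \to G(A)$; the overcategory condition over $\bar{B}G(k)$ demands that the composite $X \to BG(A) \to \bar{B}G(k)$ equal $\bar f$, which amounts to the pseudo-character of $\rho\bmod\m_{A}$ coinciding with that of $\bar\rho$. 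Invoking \cite[Theorem 4.5]{BHKT19} together with the scheme-theoretic triviality of the centralizer of $\bar\rho$ in $G^{\ad}_{k}$ (coming from $\H^{0}(\Gamma,\g_{k}) = \z_{k}$), one canonically matches $\rho\bmod\m_{A}$ with $\bar\rho$, yielding a bijection with $\cD^\square(A)$.

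Next, I would check the arrows. The horizontal arrows, induced by $\cO_{N_\bullet G}^{\ad G}\hookrightarrow \cO_{N_\bullet G}$, send a lift $\rho$ to its pseudo-character $(\gamma_{1},\dots,\gamma_{n})\mapsto [(\rho(\gamma_{1}),\dots,\rho(\gamma_{n}))]$ in $\bar{B}G(A)_{n}$; the vertical arrows are the natural quotient maps (by $\widehat{G}(A)$-conjugacy on the left, by $\pi_{0}$ on the right). Commutativity of the outer square is then immediate from naturality. To construct the dotted arrow, observe that $\aDef_{\bar f,c}(A)$ is defined via the adjoint-invariant algebra $\cO_{N_\bullet G}^{\ad G}$, so the top horizontal map is insensitive to $\widehat{G}(A)$-conjugation on $\rho$. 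It therefore factors through the quotient $\cD^\square(A)\twoheadrightarrow \cD(A)$, yielding the dotted map $\cD(A)\dashrightarrow \aDef_{\bar f,c}(A)$ that makes the upper triangle commute.

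Finally, by \thmref{bDef} the natural assignment sending a continuous group homomorphism $\rho\colon \Gamma \to G(A)$ lifting $\bar\rho$ to its pseudo-character is a bijection between $\cD(A)$ and $\bDef_{\bar f,c}(A) \subseteq \aDef_{\bar f,c}(A)$. Since this bijection coincides, by construction, with the dotted arrow, its image equals $\bDef_{\bar f,c}(A)$. The main subtlety is the top-left identification: the overcategory condition yields only pseudo-equality of reductions, and the passage to honest equality in $\cD^\square(A)$ relies essentially on the rigidity provided by the triviality of the centralizer of $\bar\rho$ in $G^{\ad}_{k}$.
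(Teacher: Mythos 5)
Your overall structure matches the paper's (very brief) proof, and your identification of the horizontal arrows, the vertical arrows, and the dotted map — including the deduction from \thmref{bDef} that the image equals $\bDef_{\bar f,c}(A)$ — is exactly what the paper intends. The place where you diverge, and where a genuine gap appears, is the top-left corner. You read its base as $\bar{B}G(k)$ (following the displayed source diagram literally) and then invoke \cite[Theorem 4.5]{BHKT19} plus triviality of the centralizer to "canonically match" $\rho \bmod \m_{A}$ with $\bar\rho$. This step does not hold: Theorem 4.5 of \cite{BHKT19} requires the coefficient ring to be an algebraically closed field, whereas $k$ is finite, and even over $\bar{k}$ the conclusion is only that the two reductions are $G(\bar k)$-conjugate, not equal. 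Triviality of the centralizer makes the conjugating element in $G^{\ad}(\bar k)$ unique (hence Galois-invariant), but it need not be the identity, so the set you describe is a priori strictly larger than $\cD^{\square}(A)$ — it is closed under conjugating $\rho$ by lifts of arbitrary elements of $G^{\ad}(k)$.

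The paper's proof shows that the intended base for the top-left entry is $BG(k)$ (matching the bottom-left, so that the vertical quotient map makes sense); the displayed $\bar{B}G(k)$ in the source diagram is almost certainly a typo. With base $BG(k)$, the identification is immediate: $\sHom_{\sSet_{/BG(k)}}(X, BG(A))_{0} = \Hom_{\cM}(X, BG(A))$, and fully faithfulness of the nerve $B \colon \Gpd \to \sSet$ (Lemma 2.4) identifies this with the set of continuous lifts $\rho\colon\Gamma\to G(A)$ of $\bar\rho$, i.e.\ $\cD^{\square}(A)$. No appeal to BHKT or to rigidity of $\bar\rho$ is needed. Everything else in your argument — the interpretation of the horizontal arrows via $\cO_{N_\bullet G}^{\ad G}\hookrightarrow\cO_{N_\bullet G}$, the factorisation of the top arrow through $\widehat{G}(A)$-conjugacy to produce the dotted map, and the identification of its image with $\bDef_{\bar f,c}(A)$ via \thmref{bDef} — is correct and agrees with the paper.
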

	
	\begin{proof}
		We have $\sHom_{\sSet _{/ BG(k)}}(X, BG(A))_{0} = \Hom_{\cM}(X, BG(A))$, which is exactly $\cD^{\square}(A)$, since $B \colon \Gpd \to \sSet$ is fully faithful.   The other isomorphisms follow by definition.
		
		The dotted arrow signifies the inclusion of usual deformations into pseudo-deformations, whose image is $\bDef_{\bar f,c}(A)$ by \thmref{bDef}.
	\end{proof}
	
	\begin{rem}
		Note however that the functor $sa\cD \colon \sArt \to \sSet$ remains quite mysterious. It may be asked whether there is a more adequate 
		derived deformation functor for pseudo-characters. 
	\end{rem}

	\section{(Co)tangent complexes and pro-representability}\label{cotangent}

	\subsection{Dold-Kan correspondence}
	
	Let's briefly review the Dold-Kan correspondence.  Let $R$ be a commutative ring.  Our goal here is to recall an equivalence (of model categories) between the category of simplicial $R$-modules $\sMod_{R}$ and the category of chain complexes of $R$-modules concentrated on non-negative degrees $\Ch_{\geq 0}(R)$.  Recall the model category structures on $\sMod_{R}$ and $\Ch_{\geq 0}(R)$:
	\begin{enumerate}
		\item For $\sMod_{R}$, the fibrations and weak equivalences are linear morphisms which are in $\sSet$, and the cofibrations are linear morphisms satisfying a lifting property (see \cite[Proposition 7.2.3]{Hir03}).
		\item For $\Ch_{\geq 0}(R)$, the cofibrations, fibrations and weak equivalences are linear morphisms satisfying the following:
		\begin{enumerate}
			\item $f\colon C_{\bullet}\to D_{\bullet}$ is a cofibration if $C_{n}\to D_{n}$ is injective with projective cokernel for $n\geq 0$.
			\item $f\colon C_{\bullet}\to D_{\bullet}$ is a fibration if $C_{n}\to D_{n}$ is surjective for $n\geq 1$.
			\item $f\colon C_{\bullet}\to D_{\bullet}$ is a weak equivalence if the morphism $H_{\ast}f$ induced on homology is an isomorphism.
		\end{enumerate}
	\end{enumerate}
	
	We write $M \in \sMod_{R}$ for the simplicial $R$-module with $M_{n}$ on $n$-th simplicial degree.  Let $N(M)$ be the chain complexes of $R$-modules  such that $N(M)_n=\bigcap\limits_{i=0}^{n-1}\ker(d_{i}) \subseteq M_{n}$ with differential maps
	\begin{displaymath}
	(-1)^{n}d_{n} \colon \bigcap\limits_{i=0}^{n-1}\ker(d_{i})\subseteq M_{n} \to \bigcap\limits_{i=0}^{n-2}\ker(d_{i})\subseteq M_{n-1}.
	\end{displaymath}
	Obviously $M\mapsto N(M)$ is functorial.  We call $N(M)\in\Ch_{\geq 0}(R)$ the normalized complex of $M$.  
	
	The Dold-Kan functor $\DK\colon \Ch_{\geq 0}(R) \to \sMod_{R}$ is the quasi-inverse of $N$.  Explicitely, for a chain of $R$-modules $C_{\bullet}=(C_{0} \leftarrow C_{1} \leftarrow C_{2} \leftarrow \dots)$, we define $\DK(C_{\bullet}) \in \sMod_{R}$ as follows:
	\begin{enumerate}
		\item $\DK(C_{\bullet})_{n} = \bigoplus\limits_{[n] \twoheadrightarrow [k]} C_{k}$.
		\item For $\theta\colon [m] \to [n]$, we define the corresponding $\DK(C_{\bullet})_{n} \to \DK(C_{\bullet})_{m}$ 
		on each component of $\DK(C_{\bullet})_{n}$ indexed by 
		$[n] \stackrel{\sigma}{\twoheadrightarrow} [k]$ as follows: suppose 
		$[m] \stackrel{t}{\twoheadrightarrow} [s] \stackrel{d}{\hookrightarrow} [k]$ 
		is the epi-monic factorization of the composition $[m] \stackrel{\theta}{\rightarrow}  [n] \stackrel{\sigma}{\twoheadrightarrow} [k]$,
		then the map on component $[n] \stackrel{\sigma}{\twoheadrightarrow} [k]$ is 
		\begin{displaymath}
		C_{k} \stackrel{d^{\ast}}{\rightarrow} C_{s} \hookrightarrow \bigoplus\limits_{[m] \twoheadrightarrow [r]} C_{r}.
		\end{displaymath}
	\end{enumerate}
	
	\begin{thm}
		\begin{enumerate}
			\item (Dold-Kan) The functors $\DK$ and $N$ are quasi-inverse hence form an equivalence of categories.  
			Moreover, two morphisms $f,g \in \Hom_{\sMod_{R}}(M, N)$ are simplicially homotopic if and only if $N(f)$ and $N(g)$ are chain homotopic.  
			\item The functors $\DK$ and $N$ preserve the model category stuctures of $\Ch_{\geq 0}(R)$ and $\sMod_{R}$ defined above.
		\end{enumerate}
	\end{thm}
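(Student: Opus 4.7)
The plan is to prove (1) in three stages, then deduce (2) from the explicit descriptions established. First, I would exhibit a natural isomorphism $N\circ \DK \cong \Id_{\Ch_{\geq 0}(R)}$ by unwinding definitions: for a chain complex $C_{\bullet}$, the intersection $\bigcap_{i=0}^{n-1}\ker(d_{i})$ inside $\DK(C_{\bullet})_{n} = \bigoplus_{[n]\twoheadrightarrow [k]} C_{k}$ is precisely the summand indexed by the identity surjection $[n]\twoheadrightarrow [n]$, and one checks directly that the induced residual differential $(-1)^{n}d_{n}$ agrees with $\partial_{C}$ up to the built-in sign.

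Next, for $\DK\circ N \cong \Id_{\sMod_{R}}$, the key is the Eilenberg--Moore decomposition: every simplicial $R$-module $M$ splits canonically as $M_{n} = \bigoplus_{\sigma\colon [n]\twoheadrightarrow [k]} \sigma^{\ast}(N(M)_{k})$. I would prove this by induction on $n$, writing $M_{n} = N(M)_{n} \oplus D(M)_{n}$, where $D(M)_{n}$ is the submodule generated by degenerate elements, and then further decomposing $D(M)_{n}$ using the simplicial identities $s_{i}s_{j} = s_{j+1}s_{i}$ for $i \leq j$. This decomposition transports naturally to give the desired inverse equivalence. For the homotopy statement, I would identify a simplicial homotopy $M \otimes R[\Delta[1]] \to N$ with a chain homotopy on the normalized complexes via the Eilenberg--Zilber shuffle equivalence $N(M \otimes R[\Delta[1]]) \simeq N(M) \otimes N(R[\Delta[1]])$, noting that $N(R[\Delta[1]])$ is concentrated in degrees $0$ and $1$ with the standard interval differential, whence chain maps out of this tensor product are exactly chain homotopies.

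For part (2), preservation of weak equivalences reduces to the classical identification $\pi_{n}(M) \cong H_{n}(N(M))$, which follows once one verifies that the degenerate subcomplex $D(M) \subset M_{\bullet}$ (with differential $\sum (-1)^{i}d_{i}$) is acyclic, making $N(M)$ quasi-isomorphic to the full Moore complex computing $\pi_{\ast}M$. Preservation of fibrations is immediate: a linear map of simplicial $R$-modules is a Kan fibration iff it is surjective in positive simplicial degrees, and applying $N$ this transfers directly to surjectivity in positive chain degrees. Preservation of cofibrations then follows by the lifting-property characterization together with the established behavior of trivial fibrations, invoking the small object argument on both sides.

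The main obstacle will be the homotopy compatibility. Although the Eilenberg--Zilber equivalence is natural in principle, the shuffle map is a combinatorial sum over $(p,q)$-shuffles with alternating signs, and matching the simplicial homotopy relations $d_{i}h_{j} = h_{j-1}d_{i}$ (and their cousins) to a single chain homotopy $s_{\bullet}\colon N(M)_{\bullet} \to N(N)_{\bullet+1}$ requires either careful bookkeeping of signs through the shuffle formula or, alternatively, writing down an explicit formula for $s$ in terms of the $h_{j}$ and verifying the chain-homotopy identity by hand. The remaining model-category verifications in (2) are then straightforward consequences of the explicit formulas built in (1).
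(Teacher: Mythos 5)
The paper does not actually prove this theorem; the ``proof'' is a two-line citation to \cite[Theorem 8.4.1]{Weib94} and \cite[Lemma 2.11]{GJ09}, together with the remark that part (1) holds over any abelian category. Your outline is essentially a compressed rendering of the argument found in those sources: the identification of $N(\DK(C_\bullet))$ with the identity-surjection summand, the Eilenberg--Moore decomposition $M_n \cong \bigoplus_{[n]\twoheadrightarrow[k]} N(M)_k$ for the quasi-inverse, the Eilenberg--Zilber/shuffle map for matching simplicial and chain homotopies, acyclicity of the degenerate subcomplex for $\pi_n(M)\cong H_n(N(M))$, and Moore's lemma for fibrations. So you are following the same route that the paper delegates to the references.

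One precision needs fixing in the step on fibrations. You assert that a linear map $f$ of simplicial $R$-modules is a Kan fibration if and only if $f_n$ is surjective for $n\geq 1$, and that this transfers under $N$ to surjectivity of $N(f)_n$ for $n\geq 1$. The correct statement (Moore's lemma) is that $f$ is a Kan fibration if and only if $N(f)_n$ is surjective for $n\geq 1$. Surjectivity of $f_n$ for $n\geq 1$ does imply surjectivity of $N(f)_n$ for $n\geq 1$ --- project onto the identity-surjection summand of the Eilenberg--Moore decomposition --- so it is a sufficient condition for $f$ to be a fibration, but the converse fails. For instance, the inclusion of the zero module into the constant simplicial abelian group on $\Z$ is a Kan fibration, since $N(\underline{\Z})$ vanishes in positive degrees and the surjectivity condition is vacuous, yet $f_n=0\to\Z$ is not surjective for any $n$. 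With Moore's lemma in place of your stated characterization, the argument goes through. Finally, in the step on cofibrations you do not need the small object argument for the transfer across $\DK$ and $N$: once fibrations and weak equivalences (hence trivial fibrations) are seen to correspond under the equivalence of categories, the classes defined by the left lifting property correspond automatically. What one does need, internally to $\Ch_{\geq0}(R)$, is the identification of maps with left lifting property against trivial fibrations with the monomorphisms having degreewise projective cokernel --- this is where a generating-cofibrations argument would enter, but it is a fact about $\Ch_{\geq0}(R)$ alone, not about the Dold--Kan functors.
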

	
	\begin{proof}
		See \cite[Theorem 8.4.1]{Weib94} and \cite[Lemma 2.11]{GJ09}.  Note that (1) is valid for any abelian category instead of $\sMod_{R}$.  
	\end{proof}
	
	\begin{rem} 
		Let $\Ch(R)$ be the category of complexes $(C_i)_{i\in\Z}$ of $R$-modules and $\Ch_{\geq 0}(R)$ the subcategory of complexes for which $C_i=0$ for $i<0$.
		The category $\Ch_{\geq 0}(R)$ is naturally enriched over simplicial $R$-modules, and we have 
		$$\sHom_{\Ch_{\geq 0}(R)}(C_{\bullet}, D_{\bullet}) \cong \sHom_{\sMod_{R}}(\DK(C_{\bullet}), \DK(D_{\bullet})).$$
		Given $C_{\bullet}, D_{\bullet} \in \Ch_{\geq 0}(R)$.  Let $[C_{\bullet}, D_{\bullet}] \in \Ch(R)$ 
		be the mapping complex, more precisely, $[C_{\bullet}, D_{\bullet}]_{n} = \prod_{m} \Hom_{R}(C_{m}, D_{m+n})$ 
		and the differential maps are natural ones.  Let $\tau_{\geq0}$ be the functor which sends a chain complex $X_{\bullet}$ 
		to the truncated complex
		\begin{displaymath}
		0 \leftarrow \ker(X_{0}\to X_{-1}) \leftarrow X_{1} \leftarrow \dots
		\end{displaymath}
		Then there is a weak equivalence
		
		\begin{displaymath}\label{tronc} \sHom_{\Ch_{\geq 0}(R)}(C_{\bullet}, D_{\bullet})\simeq  
		\DK (\tau_{\geq0}[C_{\bullet}, D_{\bullet}])
		\end{displaymath}
		(see \cite[Remark 11.1]{Lur09}). 
		
		It's clear that $\pi_{i}\sHom_{\Ch_{\geq 0}(R)}(C_{\bullet}, D_{\bullet})$ is isomorphic 
		to the chain homotopy classes of maps from $C_{\bullet}$ to $D_{\bullet +n}$.
	\end{rem}

	\subsection{(Co)tangent complexes of simplicial commutative rings}
	
	We recall Quillen's cotangent and tangent complexes of simplicial commutative rings.  
	
	Let $A$ be a commutative ring.  For $R$ an $A$-algebra, let $\Omega_{R/A}$ be the module of differentials with the canonical $R$-derivation $d\colon R \to \Omega_{R/A}$.  Let $\Der_{A}(R, -)$ be the covariant functor which sends an $R$-module $M$ to the $R$-module 
	\begin{displaymath}
	\Der_{A}(R, M) = \{ D\colon R \to M \mid D \text{ is } A\text{-linear and } D(xy) = xD(y)+yD(x),\text{ } \forall x,y\in R \}.
	\end{displaymath}
	It's well-known that $\Hom_{R}(\Omega_{R/A}, -)$ is naturally isomorphic to $\Der_{A}(R, -)$ via $\phi \mapsto \phi \circ d$.  
	
	Let $T$ be an $A$-algebra, and let ${}_{A}\backslash \CR/_{T}$ be the category of commutative rings $R$ over $T$ and under $A$.  
	Then for any $T$-module $M$ and any $R \in {}_{A}\backslash \CR/_{T}$, we have natural isomorphisms
	\begin{displaymath}
	\Hom_{T}(\Omega_{R/A} \otimes_{R} T, M) \cong \Der_{A}(R, M)\cong \Hom_{{}_{A}\backslash \CR/_{T}}(R, T \oplus M).
	\end{displaymath}
	where $T\oplus M$ is the $T$-algebra with square-zero ideal $M$. So the functor $R \mapsto \Omega_{R/A} \otimes_{R} T$ is left ajoint to the functor $M \mapsto T \oplus M$.  
	
	The above isomorphisms have level-wise extensions to simplicial categories (see \cite{GJ09} 
	Lemma \RNum{2}.2.9 and Example \RNum{2}.2.10). 
	For $R \in {}_{A}\backslash \sCR$, we can form $\Omega_{R/A}\otimes_R T\in s\Mod_T$.
	
	We have 
	\begin{displaymath}
	\sHom_{\sMod_{T}}(\Omega_{R/A} \otimes_{R} T, M) \cong \sHom_{{}_{A}\backslash \sCR/_{T}}(R, T \oplus M).
	\end{displaymath}
	The functor $M \mapsto T \oplus M$ from $\sMod_{T}$ to ${}_{A}\backslash \sCR/_{T}$ 
	preserves fibrations and weak equivalences (we may see this via the Dold-Kan correspondence), 
	so the left adjoint functor $R \mapsto \Omega_{R/A} \otimes_{R} T$ is left Quillen and it admits a total left derived functor.  We introduce the cotangent complex $L_{R/A}$ in the following definition, 
	so that the total left derived functor has the form $R \mapsto L_{R/A} \underline{\otimes}_{R} T$.
	Note that given two simplicial modules $M,N$ over a simplicial ring $S$, one can form (degreee-wise) a tensor product, denoted $M\underline{\otimes}_S N$, which is a simplicial $S$-module.	
	\begin{de}
		For $R \in {}_{A}\backslash \sCR$, we define $L_{R/A} = \Omega_{c(R)/A} \underline{\otimes}_{c(R)} R \in \sMod_{R}$, where $c(R)$ 
		is the middle object of some cofibration-trivial fibration factorization $A \hookrightarrow c(R) \stackrel{\sim}{\twoheadrightarrow} R$, 
		and we call $L_{R/A}$ the cotangent complex of $R$.
	\end{de}
	Note that it is an abuse of language, as it should be called cotangent simplicial $R$-module, 
	because for $R$ simplicial, $L_{R/A}\in \sMod_R$ but there is no notion of complexes of $R$-modules.
	
	By construction, $L_{R/A} \underline{\otimes}_{R} T$ is cofibrant as it's the image of the cofibrant object $c(R)$ 
	under a total left derived functor, and it is fibrant in $\sMod_{R}$ (all objects are fibrant there).  
	Note also that the weak equivalence class of $L_{R/A} \underline{\otimes}_{R} T$ is independent of the choice of $c(R)$. 
	It follows from these two observations that $L_{R/A}$ is determined up to homotopy equivalence 
	(by the Whitehead theorem \cite[Theorem 7.5.10]{Hir03}). 
	Using the Dold-Kan equivalence, we can form the normalized complex (determined up to homotopy equivalence)
	$$N( L_{R/A} \underline{\otimes}_R T)\in \Ch_{\geq 0}(T).$$
	From now on, we keep the functor $N$ understood and simply write
	$$L_{R/A}  \otimes_R T\in \Ch_{\geq 0}(T)$$	
	Recall that for $M,N\in\Ch(T)$, the internal Hom $[M,N]\in\Ch(T)$ is defined as 
	$$[M,N]_n=\prod_{m} \Hom_{T}(M_{m}, N_{m+n}).$$
	Note that if $M\in\Ch_{\geq 0}(T)$, then $[M,T]\in \Ch_{\leq 0}(T)$. For $C\in\Ch_{\leq 0}(T)$, we
	write $C^i=C_{-i}$ for $i\geq 0$; we thus identify $\Ch_{\leq 0}(T)=\Ch^{\geq 0}(T)$.
	
	Since we'll consider internal Homs $[L_{R/A} \otimes_R T, M]$, for (classical or simplicial) $T$-modules $M$,  we define 
	
	\begin{de}
		The $T$-tangent complex $\gt R_T$ of $R\to T$ is the internal hom complex 
		$$[L_{R/A} \otimes_R T, T] \in \Ch^{\geq 0}(T).$$
	\end{de}
	
	Then $\gt R_T$ is well-defined up to chain homotopy equivalence since it is the case for $L_{R/A}  \otimes_R T$. 
	
	For $R \in {}_{A}\backslash \sCR/_{T}$ and $C_{\bullet} \in \Ch_{\geq 0}(T)$, we have (by \remref{tronc}):
	\begin{align*}
	\sHom_{ {}_{A} \backslash \sCR /_{T}}(c(R), T\oplus \DK(C_{\bullet})) 
	&\cong \sHom_{\sMod_{T}}(L_{R/A} \otimes_{R} T,\DK(C_{\bullet})) 
	\\ &\simeq  \DK(\tau_{\geq0}[L_{R/A} \otimes_{R} T,C_{\bullet}])
	\\ &\cong \DK(\tau_{\geq0} [L_{R/A},C_{\bullet}]).
	\end{align*}

	\subsection{Tangent complexes of formally cohesive functors and Lurie's criterion}
	
	In \cite[Section 4]{GV18}, the authors define the tangent complexes of formally cohesive functors.  To summarize, we have the following proposition:
	
	\begin{pro}
		Let $\cF \colon \sArt \to \sSet$ be a formally cohesive functor.  Then there exists $L_{\cF} \in \Ch(k)$ such that $\cF(k\oplus \DK(C_{\bullet}))$ is weakly equivalent to $\DK(\tau_{\geq0} [L_{\cF},C_{\bullet}])$ for every $C_{\bullet} \in \Ch_{\geq 0}(k)$ with $H_{\ast}(C_{\bullet})$ finite over $k$.
	\end{pro}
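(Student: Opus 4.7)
The plan is to treat this as a derived representability statement: the formally cohesive property forces the functor $M\mapsto \cF(k\oplus M)$ to be ``linear'' on simplicial $k$-modules $M$, and one then reads off the cohomology of $L_{\cF}$ from the values of $\cF$ on square-zero extensions by shifted copies of $k$. Throughout, we use that over the field $k$ every object of $\Ch_{\geq 0}(k)$ with finite total homology is quasi-isomorphic to a finite direct sum of shifts $k[n]$, which reduces the problem to the case $C_{\bullet}=k[n]$.

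First I would establish linearity. For every $M\in\sMod_k$ with $\pi_\ast M$ finite over $k$, the square-zero extension $k\oplus M$ lies in $\sArt$ and the projection $k\oplus M\to k$ is degreewise surjective. Applying axiom (2) of formal cohesiveness to the addition map $M\oplus M\to M$ (rewritten as the evident pullback square $k\oplus(M\oplus M)\simeq (k\oplus M)\times_k (k\oplus M)$), one equips $\cF(k\oplus M)$ with a natural and functorial simplicial abelian group (indeed simplicial $k$-module) structure, whose unit is supplied by axiom (3). Iterating the same axiom, the resulting functor $F\colon M\mapsto \cF(k\oplus M)$, composed with Dold-Kan normalization, descends to a functor on $\Ch_{\geq 0}(k)$ that preserves quasi-isomorphisms (axiom (1)) and homotopy fiber sequences (axiom (2) again).

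Second, I would construct $L_{\cF}$ by specifying its cohomology. For each $n\geq 0$, set $V_n := \pi_0\,\cF(k\oplus \DK(k[n]))$, which is a finite-dimensional $k$-vector space by hypothesis and by the formal cohesion. Since $k$ is a field, chain complexes in $\Ch(k)$ are determined up to quasi-isomorphism by their cohomology, so I would take $L_{\cF}$ to be the complex with zero differential whose graded pieces are the duals $V_n^{\vee}$ placed in the degrees forced by the formula, i.e.\ so that $H^n\bigl([L_{\cF},k[n]]\bigr)=V_n$. Direct computation then gives $\DK(\tau_{\geq 0}[L_{\cF},k[n]])\simeq \cF(k\oplus\DK(k[n]))$, matching step one on the sphere test objects.

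Third, one extends from the sphere cases to arbitrary $C_{\bullet}$ with finite homology: using that over $k$ we may write $C_{\bullet}\simeq\bigoplus_n H_n(C_\bullet)[n]$ as a finite homotopy limit of shifted copies of $k$, the linearity of $F$ from step one transports the sphere-case weak equivalence to a weak equivalence $\cF(k\oplus\DK(C_{\bullet}))\simeq\DK(\tau_{\geq 0}[L_{\cF},C_{\bullet}])$, since both sides send finite homotopy limits of $k[n]$'s to homotopy limits. The main obstacle will be step one: rigorously upgrading the ``H-space'' structure coming from axiom (2) to a genuine simplicial $k$-module structure on $\cF(k\oplus M)$, and verifying that the derived functor $F$ is sufficiently exact to be pinned down by its values on the spheres $\DK(k[n])$. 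The finiteness hypothesis on $H_\ast(C_\bullet)$ is essential here: it keeps $k\oplus\DK(C_\bullet)$ inside $\sArt$ and prevents the infinite products in $[L_{\cF},C_{\bullet}]$ from breaking the reduction to the sphere cases.
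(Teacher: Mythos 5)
The paper's own ``proof'' is just a citation to \cite[Lemma 4.25]{GV18}, so I will compare your sketch against what that lemma actually requires. Your overall shape is reasonable --- reduce to the sphere test objects $k[n]$ using semisimplicity of $\Ch(k)$ over the field $k$, then glue --- but two specific points are wrong or unaddressed.

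First, your recipe for $L_{\cF}$ is incomplete. You set $V_n := \pi_0\,\cF(k\oplus\DK(k[n]))$ for $n\geq 0$ and place $V_n^\vee$ in degree $n$, but this produces a complex concentrated in non-negative degrees, which only encodes $\gt^j\cF$ for $j\geq 0$. The statement asserts $L_{\cF}\in\Ch(k)$, unbounded, and indeed a formally cohesive functor may have $\gt^{-i}\cF = \pi_i\,\cF(k\oplus k)\neq 0$ for $i>0$; those classes must show up as negative-degree pieces of $L_{\cF}$. (They are exactly what Lurie's criterion later demands vanish for pro-representability --- so in general they are nonzero.) The correct bookkeeping is to note that formal cohesion (applied to the pullback square expressing $k\oplus k[n]$ as a loop object of $k\oplus k[n+1]$, after a fibrant/degreewise-surjective replacement) forces $\pi_i\,\cF(k\oplus k[j])$ to depend only on $j-i\in\Z$, and to use this stable quantity to define $L_{\cF}$ in \emph{all} degrees, not just the ones read off from $\pi_0$.

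Second, you have correctly located the real content of the lemma in your ``main obstacle,'' but you have not touched it, and it cannot be waved through. Knowing that $\cF(k\oplus\DK(C_\bullet))$ has the correct homotopy groups is not enough to conclude it is weakly equivalent to the simplicial $k$-module $\DK(\tau_{\geq 0}[L_{\cF},C_\bullet])$: the right-hand side is a generalized Eilenberg--MacLane space, so one must show the left-hand side is too, i.e.\ that its $k$-invariants vanish. The $H$-space structure coming from the pullback square $(k\oplus M)\times_k(k\oplus M)\simeq k\oplus(M\oplus M)$ is a necessary ingredient, but a group-like $H$-space (or even an infinite loop space) is not determined by its homotopy groups. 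What is needed is precisely what Galatius--Venkatesh supply: a representability statement showing $M\mapsto\cF(k\oplus M)$ is (naturally weakly equivalent to) a functor of the form $\sHom_{\sMod_k}(L,-)$, so its values are honest simplicial $k$-modules. Without that step the sphere-by-sphere comparison of homotopy groups does not assemble into a weak equivalence, and the argument does not close.

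Your reduction of the general $C_\bullet$ to spheres is fine (the finiteness hypothesis on $H_\ast(C_\bullet)$ is used exactly as you say, to stay in $\sArt$ and to make the decomposition finite), and your use of axiom (2) to split $\cF(k\oplus(M\oplus N))$ as a product is correct. But please repair the definition of $L_{\cF}$ to include the negative-degree pieces, and recognize that the ``main obstacle'' you flag is not an afterthought: it is the theorem.
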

	
	\begin{proof}
		See \cite[Lemma 4.25]{GV18}.
	\end{proof}
	
	\begin{de}
		Let $\cF \colon \sArt \to \sSet$ be a formally cohesive functor.
		\begin{enumerate}
			\item We call $L_{\cF}$ the cotangent complex of $\cF$.
			\item The tangent complex $\gt\cF$ of $\cF$ is the chain complex defined by the internal hom complex $[L_{\cF}, k]$. 
		\end{enumerate}
	\end{de}
	
	Note that $L_{\cF}$ and $\gt\cF$ are uniquely determined up to quasi-isomorphism.  We shall use $\gt^{i}\cF$ to abbreviate the homology groups $H_{-i}\gt\cF$.
	
	\begin{rem}
		If $R \in {}_{\cO}\backslash \sCR/_{k}$ is cofibrant, then the functor $\cF_{R} = \sHom_{{}_{\O} \backslash \sCR/_{k}}(R, -) \colon \sArt \to \sSet$ is formally cohesive.  Since $\DK(\tau_{\geq0} [L_{\cF_{R}},k[n]]) \simeq \sHom_{ {}_{\O} \backslash \sCR /_{k}}(R, k\oplus k[n]) \simeq \DK(\tau_{\geq0} [L_{R/\O},k[n]])$, the cotangent complexes $L_{\cF_{R}}$ and $L_{R/\O} \otimes_{R} k$ are quasi-isomorphic.
	\end{rem}
	
	\begin{de}
		We say a functor $\cF\colon \sArt \to \sSet$ is pro-representable, if there exists a projective system $R = (R_{n})_{n\in \N}$ with each $R_{n} \in \sArt$ cofibrant, such that $\cF$ is weakly equivalent to $\varinjlim\limits_{n}\sHom_{\sArt}(R_{n}, -)$.  In this case we say $R = (R_{n})_{n\in \N}$ is a representing ring for $\cF$.  We shall write $\sHom_{\sArt}(R, -)$ for $\varinjlim\limits_{n}\sHom_{\sArt}(R_{n}, -)$.
	\end{de}
	
	\begin{rem}
		The pro-representability defined above is called sequential pro-representability in \cite{GV18}, but we will only deal with this case.
	\end{rem}
	
	\begin{thm}[Lurie's criterion]\label{Lurie}
		Let $\cF$ be a formally cohesive functor.  If $\dim_{k} \gt^{i}\cF$ is finite for every $i \in \Z$, and $\gt^{i}\cF = 0$ for every $i <0$, then $\cF$ is (sequentially) pro-representable.
	\end{thm}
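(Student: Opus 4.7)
The plan is to construct the representing pro-system $R=(R_n)_{n\in\N}$ inductively as a tower $\cO=R_{-1}\to R_0\to R_1\to\cdots$ of cofibrant objects in $\sArt$ equipped with compatible natural transformations $\varphi_n\colon \cF_{R_n}:=\sHom_{\sArt}(R_n,-)\to \cF$, and then to verify that the induced map from the pro-represented functor $\cF_R=\varinjlim_n\cF_{R_n}$ to $\cF$ is a levelwise weak equivalence. The strategy is guided by the observation in the remark preceding the theorem: for cofibrant $R$, the functor $\cF_R$ is formally cohesive with cotangent complex $L_{\cF_R}\simeq L_{R/\cO}\otimes_R k$. Hence both $\cF$ and $\cF_R$ are formally cohesive, and by the preceding proposition they are determined, up to a suitable notion of equivalence, by their tangent complexes. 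Thus the aim of the induction is to arrange $\gt\cF_R\simeq \gt\cF$.

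For the base step $n=0$, choose a finite basis of $\gt^0\cF$; using the identification of $\cF(k\oplus\DK(C_\bullet))$ with $\DK(\tau_{\geq 0}[L_\cF,C_\bullet])$ for small $C_\bullet$, lift these basis elements to morphisms out of a smooth polynomial $\cO$-algebra truncated into $\sArt$, producing $R_0$ and $\varphi_0$. For the inductive step, given $R_n$ with $\varphi_n$, form the homotopy fibre $\cG_{n+1}$ of $\cF_{R_n}\to \cF$ on the augmented ring $k\oplus\DK(k[n+1])$; the formal cohesiveness of both sides together with the long exact sequence on tangent cohomology identifies the relevant homotopy group of $\cG_{n+1}$ with a finite-dimensional piece of $\gt^{n+1}\cF$ (finiteness is the hypothesis). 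Choose finitely many generators for this piece and attach corresponding ``cells'' to $R_n$ in the cofibrantly generated model structure on ${}_\cO\backslash \sCR/_k$, forming $R_{n+1}$; the homotopy invariance of the construction and the hypothesis $\gt^i\cF=0$ for $i<0$ ensure that these generators can be realized as actual morphisms $R_{n+1}\to A$ for test objects $A\in\sArt$, and that no obstruction in negative degree blocks the lift.

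To verify convergence, one checks by induction on $n$ that after passing to the stage $R_n$ the map $\gt\cF\to \gt\cF_{R_n}$ is a quasi-isomorphism through homological degree $-n$; at the pro-limit this yields an honest quasi-isomorphism $\gt\cF_R\simeq\gt\cF$. Since both functors are formally cohesive, homotopy-pullback induction on square-zero extensions in $\sArt$ (using the standard filtration of an arbitrary object of $\sArt$ by such extensions, with the reduction to small extensions controlled by formal cohesiveness) then propagates the tangent-level equivalence to a levelwise weak equivalence $\cF_R\to\cF$, completing pro-representability with representing ring $R$.

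The main technical obstacle is the cell-attachment step: one must simultaneously (i) keep each $R_n$ inside $\sArt$, i.e.\ preserve the Artinianness of $\pi_0$ and the finite-generation of $\pi_\ast$ over $\pi_0$, and (ii) guarantee that attaching cells to correct the tangent complex in degree $n+1$ does not disturb the previously achieved quasi-isomorphism in lower degrees. The finiteness hypothesis on each $\gt^i\cF$ is what keeps (i) under control, while the vanishing hypothesis $\gt^{<0}\cF=0$ is exactly what makes (ii) possible, since obstructions to extending $\varphi_n$ to $\varphi_{n+1}$ are classified by cohomology of $\gt\cF$ in degrees that, under our vanishing assumption, lie outside the obstructed range. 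The precise bookkeeping of this lifting problem, carried out in Lurie's foundational work on moduli problems, is the crux of the argument.
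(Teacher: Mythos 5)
The paper does not actually prove this theorem: its ``proof'' is a one-line citation to \cite[Corollary 6.2.14]{Lur04} and \cite[Theorem 4.33]{GV18}, so there is no in-paper argument to compare yours against. On its own terms, your proposal is a reasonable high-level account of the strategy used in those references (build a tower of cofibrant objects of $\sArt$ by cell attachment, arrange a progressively better isomorphism on tangent complexes, then bootstrap to a levelwise weak equivalence by induction over square-zero extensions), and you have correctly identified where each of the two hypotheses enters: finiteness of each $\gt^i\cF$ bounds the size of each cell attachment, while vanishing of $\gt^i\cF$ for $i<0$ is what makes the obstruction theory for lifting $\varphi_n$ to $\varphi_{n+1}$ unobstructed.

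That said, the proposal is an outline rather than a proof, and the steps you explicitly defer are precisely the ones carrying the mathematical content. In particular: (i) you must actually verify that each cell attachment keeps $R_{n+1}$ inside $\sArt$ (that $\pi_0$ remains Artinian local and $\pi_\ast$ remains finite over $\pi_0$), and that attaching a cell in degree $n{+}1$ does not disturb the quasi-isomorphism already achieved in lower degrees --- this is a genuine connectivity and finiteness computation, not a consequence of ``homotopy invariance of the construction''; (ii) passing from a quasi-isomorphism $\gt\cF_R \simeq \gt\cF$ to a weak equivalence $\cF_R(A)\xrightarrow{\sim}\cF(A)$ for every $A\in\sArt$ requires running the simplicial Artinian induction over a Postnikov-type filtration of $A$ by square-zero extensions using formal cohesiveness, which you only gesture at; (iii) the base step needs to produce a genuine pro-object $R_0$ together with a natural transformation $\cF_{R_0}\to\cF$, which is more than lifting a $k$-basis of $\gt^0\cF$ to a polynomial algebra. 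In short, you have the right picture of Lurie's argument and the right diagnosis of the roles of the hypotheses, but you have not proved the theorem; you have sketched where the proof lives, and your own closing sentence concedes as much.
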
	
	
	\begin{proof}
		See \cite[Corollary 6.2.14]{Lur04} and \cite[Theorem 4.33]{GV18}.
	\end{proof}
	
	The following lemma illustrates the conservativity of the tangent complex functor:
	
	\begin{lem}
		Suppose $\cF_{1},\cF_{2}\colon \sArt \to \sSet$ are formally cohesive functors.  Then a natural transformation $\cF_{1} \to \cF_{2}$ is a weak equivalence if and only if it induces isomorphisms $\gt^{i}\cF_{1} \to \gt^{i}\cF_{2}$ for all $i$.
	\end{lem}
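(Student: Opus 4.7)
The ``only if'' direction is immediate: a weak equivalence $\cF_{1}\to \cF_{2}$ yields weak equivalences $\cF_{1}(k\oplus \DK(C_{\bullet}))\to \cF_{2}(k\oplus \DK(C_{\bullet}))$ for every $C_{\bullet}\in\Ch_{\geq 0}(k)$ with finite homology, and applying $\pi_{\ast}$ together with the characterization of $L_{\cF_{j}}$ recalled above gives isomorphisms on $\gt^{i}$.

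For the converse, my plan is to reduce the statement to the case of the ``linear'' test objects $k\oplus\DK(C_{\bullet})$ via an inductive Postnikov-type decomposition of objects of $\sArt$. First, I would observe that the hypothesis $\gt^{i}\cF_{1}\xrightarrow{\sim}\gt^{i}\cF_{2}$ for every $i$ is, via the quasi-isomorphism $\cF_{j}(k\oplus \DK(C_{\bullet}))\simeq \DK(\tau_{\geq 0}[L_{\cF_{j}},C_{\bullet}])$, equivalent to $\cF_{1}(k\oplus \DK(C_{\bullet}))\to \cF_{2}(k\oplus \DK(C_{\bullet}))$ being a weak equivalence for every such $C_{\bullet}$ with finite-dimensional homology; in particular it holds for every $k\oplus k[n]$.

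Next, I would invoke the standard fact that any $A\in\sArt$ admits a finite tower
\begin{displaymath}
A=A_{N}\to A_{N-1}\to \cdots \to A_{1}\to A_{0}=k
\end{displaymath}
of degreewise surjective maps such that each stage sits in a homotopy pullback square
\begin{displaymath}
\xymatrix{
A_{i+1} \ar[r]\ar[d] & k \ar[d] \\
A_{i} \ar[r] & k\oplus k[n_{i}]
}
\end{displaymath}
in $\sArt$ (one first reduces to the discrete case $\pi_{0}A$, where the tower is given by small classical extensions of Artinian local rings, and then climbs up the Postnikov decomposition killing one homotopy group at a time; finiteness of $\pi_{\ast}A$ as a $\pi_{0}A$-module guarantees the tower is finite). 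I would then prove by induction on $N$ that $\cF_{1}(A_{i})\to \cF_{2}(A_{i})$ is a weak equivalence: the base case $A_{0}=k$ follows from the contractibility clause in the definition of formally cohesive, and in the inductive step formal cohesiveness of both $\cF_{1}$ and $\cF_{2}$ turns the displayed pullback square into homotopy pullback squares for $\cF_{j}(A_{i+1})$, so the 2-out-of-3 property combined with the induction hypothesis and the first paragraph gives the result.

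The main obstacle is the Postnikov/tower decomposition: one must be careful that the maps produced are degreewise surjective (so formal cohesiveness applies) and that the extension data at each stage is classified by a map to some $k\oplus k[n_{i}]$. In the discrete Artinian part this is standard; for the higher Postnikov stages one may work one $k$-step at a time, reducing to the case where $\pi_{i}A$ has length one as a $\pi_{0}A$-module and using that a $k$-module extension of $A_{i}$ by $k[n_{i}+1]$ is classified, up to homotopy, by a map $A_{i}\to k\oplus k[n_{i}+1]$, which is the content of the square above. Everything else is a formal consequence of the two-out-of-three property for weak equivalences in $\sSet$ and the hypothesis that formally cohesive functors preserve the homotopy pullbacks in question.
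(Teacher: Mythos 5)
Your proof is correct and follows essentially the same route as the paper: the paper's one-line argument invokes ``simplicial artinian induction'' from \cite[Section 4]{GV18}, which is exactly the finite tower of degreewise-surjective small extensions with each stage a homotopy pullback over $k\oplus k[n_i]$ that you spell out, combined with formal cohesiveness and 2-out-of-3. Your version simply unfolds the citation into an explicit Postnikov-style induction, but the content is the same.
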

	
	\begin{proof}
		If the natural transformation induces isomorphisms $\gt^{i}\cF_{1} \to \gt^{i}\cF_{2}$, then $ \cF_{1}(k\oplus k[n]) \to \cF_{2}(k\oplus k[n])$ is a weak equivalence.  So by simplicial artinian induction \cite[Section 4]{GV18}, it induces a weak equivalence $\cF_{1}(A) \to \cF_{2}(A)$ for $A \in \sArt$.
	\end{proof}

	\subsection{Pro-representability of derived deformation functors}
	
	In the following, we suppose $p>2$, and $\Gamma=G_{F,S}$ for $S=S_{p} \cup S_{\infty}$.  Suppose further that $\bar\rho$ satisfies $(Ord_v)$ and $(Reg_v)$ for $v\in S_{p}$, and $H^0(\Gamma, \g_k) = \z_k$.  Recall that we've 
	introduced derived deformation functors $s\cD$ and $s\cD^{\ord}$, 
	as well as the modifying-center variants $s\cD_{Z}$ and $s\cD_{Z}^{\ord}$.  
	These functors are all formally cohesive.  Their tangent complexes are related to the Galois cohomology groups $H^i_\ast(\Gamma,\g_k)$ of adjoint representations, where $\ast=\emptyset$ or $\ord$.

	\subsubsection{Galois cohomology}
	
	We briefly review the Galois cohomology theory.  To define the nearly ordinary cohomology, we fix
	the standard Levi decomposition $B=TN$ of the standard Borel of $G$; it induces a decomposition of Lie algebras over $k$: 
	$\b_k=\t_k\oplus\n_k$. Recall the definition of the Greenberg-Wiles nearly ordinary Selmer group
	$$\widetilde{H}_{\ord}^{1}(\Gamma, \g_k)=\Ker\left(H^{1}(\Gamma, \g_k)\to\prod_{v\in S_p}{H^1(\Gamma_v,\g_k)\over L_v}\right)$$
	where $L_v=\im(H^1(\Gamma_v,\b_k)\to H^1(\Gamma_v,\g_k))$.
	
	For $v\in S_{p}$, let $\tilde{L}_{v} \subseteq Z^1(\Gamma_{v}, \g_k)$ be the preimage of $L_v$.  Let $C^{\bullet}_{\ord}(\Gamma, \g_k)$ be the mapping cone of the natural cochain morphism 
	\begin{displaymath}
	\xymatrix{
		0 \ar[r]& C^{0}(\Gamma, \g_k) \ar[r]\ar[d] &  C^{1}(\Gamma, \g_k) \ar[r]\ar[d] &  C^{2}(\Gamma, \g_k) \ar[d]\ar[r] &\dots \\
		0 \ar[r] & 0 \ar[r] & \bigoplus_{v \in S_p} C^{1}(\Gamma_{v}, \g_k)/\tilde{L}_{v}  \ar[r] & \bigoplus_{v \in S_p} C^{2}(\Gamma_{v}, \g_k) \ar[r] &\dots 
	}
	\end{displaymath}
	Then we define the nearly ordinary cohomology groups $H^{\ast}_{\ord}(\Gamma, \g_k)$ as the cohomology of the complex  $C^{\bullet}_{\ord}(\Gamma, \g_k)$.  They fit into the exact sequence ($\bigstar$):
	\begin{align*}
	0 \to &H^{0}_{\ord}(\Gamma, \g_k) \to H^{0}(\Gamma, \g_k) \to 0 \\
	\to &H^{1}_{\ord}(\Gamma, \g_k) \to H^{1}(\Gamma, \g_k) \to \bigoplus_{v \in S_p} H^{1}(\Gamma_v, \g_k)/L_{v} \\
	\to &H^{2}_{\ord}(\Gamma, \g_k) \to H^{2}(\Gamma, \g_k) \to \bigoplus_{v \in S_p} H^{1}(\Gamma_v, \g_k)\\
	\to &H^{3}_{\ord}(\Gamma, \g_k) \to 0
	\end{align*}
	In particular, $\widetilde{H}_{\ord}^{1}(\Gamma, \g_k)=H_{\ord}^{1}(\Gamma, \g_k)$.
	
	\begin{de} \label{dual} For a finite $\cO[\Gamma]$-module $M$, we write $M^\vee=\Hom_\O(M,K/\O)$ and $M^\ast=\Hom_\O(M,K/\O(1))$. In particular if $M$ is a $k$-vector space,
	$M^\vee=\Hom_k(M,k)$ and $M^\ast=\Hom_k(M,k(1))$.
	\end{de}
	
	Recall the local Tate duality $H^1(\Gamma_v, \g_k) \times H^1(\Gamma_v, \g_k^{\ast}) \to k$.  Let $L_v^{\perp} \subseteq H^1(\Gamma_v, \g_k^{\ast})$ be the dual of $L_v$.  We define similarly the cohomology groups $H^{\ast}_{\ord,{}^\perp}(\Gamma,\g_k^{\ast})$.  In particular  
	$$\bigoplus_{v \in S_p}L_{v} \to H^{1}(\Gamma, \g_k^{\ast})^{\vee} \to H^{1}_{\ord,{}^\perp}(\Gamma, \g_k^{\ast})^{\vee} \to 0$$
	is exact.  By fitting this into the Poitou-Tate exact sequence (see \cite[Theorem \RNum{1}.4.10]{Mil06}), we obtain the exact sequence ($\bigstar\bigstar$): 
	\begin{align*}
	&H^{1}(\Gamma, \g_k) \to \bigoplus_{v \in S_p} H^{1}(\Gamma_v, \g_k)/L_{v}  \\
	\to H^{1}_{\ord,{}^\perp}(\Gamma, \g_k^{\ast})^{\vee} \to &H^{2}(\Gamma, \g_k) \to \bigoplus_{v \in S_p} H^{2}(\Gamma_v, \g_k) \\
	\to H^{0}(\Gamma, \g_k^{\ast})^{\vee} \to &0
	\end{align*} 
	We deduce the Poitou-Tate duality: 
	
	\begin{thm}\label{Poitou-Tate}
		For each $i\in \{0,1,2,3\}$, there is a perfect pairing 
		\begin{displaymath}
		H^{i}_{\ord,{}^\perp}(\Gamma,\g_k^{\ast}) \times H^{3-i}_{\ord}(\Gamma, \g_k) \to k.
		\end{displaymath}
	\end{thm}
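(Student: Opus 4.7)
The plan is to deduce all four dualities by combining the mapping-cone sequence $(\bigstar)$ with two applications of the nine-term Poitou--Tate sequence $(\bigstar\bigstar)$: once to the pair $(\g_k, \{L_v\}_{v \in S_p})$ as already stated in the excerpt, and once to the ``dual'' pair $(\g_k^\ast, \{L_v^\perp\}_{v \in S_p})$. This second application is legitimate because $\g_k^{\ast\ast} \cong \g_k$ canonically as $k[\Gamma]$-modules and $(L_v^\perp)^\perp = L_v$ under local Tate duality; in particular, the Selmer conditions ``dual to $L_v^\perp$'' for $\g_k^{\ast\ast} = \g_k$ coincide with the original $L_v$.

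For $i = 0$: the sequence $(\bigstar)$ immediately gives $H^0_{\ord,{}^\perp}(\Gamma, \g_k^\ast) = H^0(\Gamma, \g_k^\ast)$, while the terminal three-term piece
\[
H^2(\Gamma, \g_k) \to \bigoplus_{v \in S_p} H^2(\Gamma_v, \g_k) \to H^0(\Gamma, \g_k^\ast)^\vee \to 0
\]
of $(\bigstar\bigstar)$ identifies $H^0(\Gamma, \g_k^\ast)^\vee$ with the cokernel that appears as $H^3_\ord(\Gamma, \g_k)$ in $(\bigstar)$. This gives the $i=0$ pairing, and the case $i=3$ is symmetric, obtained by applying $(\bigstar\bigstar)$ to $(\g_k^\ast, \{L_v^\perp\})$ and using $H^0_\ord(\Gamma, \g_k) = H^0(\Gamma, \g_k)$.

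For $i = 1$: the sequence $(\bigstar)$ presents $H^2_\ord(\Gamma, \g_k)$ as the middle term of the short exact sequence
\[
0 \to \Coker\bigl(H^1(\Gamma, \g_k) \to \bigoplus_{v} H^1(\Gamma_v, \g_k)/L_v\bigr) \to H^2_\ord(\Gamma, \g_k) \to \Ker\bigl(H^2(\Gamma, \g_k) \to \bigoplus_{v} H^2(\Gamma_v, \g_k)\bigr) \to 0,
\]
while the middle segment of $(\bigstar\bigstar)$ expresses $H^1_{\ord,{}^\perp}(\Gamma, \g_k^\ast)^\vee$ as precisely the same extension of kernel by cokernel. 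The resulting isomorphism $H^1_{\ord,{}^\perp}(\Gamma, \g_k^\ast)^\vee \cong H^2_\ord(\Gamma, \g_k)$ supplies the pairing; the case $i=2$ follows symmetrically via $(\bigstar\bigstar)$ applied to $(\g_k^\ast, \{L_v^\perp\})$.

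The main obstacle will be upgrading these abstract identifications of $k$-vector spaces into a bona fide perfect pairing on cohomology, that is, to verify they coincide with the natural pairing $H^i_{\ord,{}^\perp}(\Gamma, \g_k^\ast) \times H^{3-i}_\ord(\Gamma, \g_k) \to k$ defined at the cochain level by cupping local components with the Tate fundamental class and summing over $v \in S_p$. This compatibility is a standard but somewhat delicate diagram chase tracing the connecting homomorphisms through the mapping cone construction; particular care is needed at the boundary cases $i = 0, 3$, where the identification passes through the edge maps of both $(\bigstar)$ and $(\bigstar\bigstar)$.
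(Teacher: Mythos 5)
Your proposal is correct and takes essentially the same approach as the paper, which likewise proves the cases $i\in\{0,1\}$ by comparing the mapping-cone sequence $(\bigstar)$ with the Poitou--Tate sequence $(\bigstar\bigstar)$ and then deduces $i\in\{2,3\}$ by applying the same comparison to the dual data $(\g_k^\ast,\{L_v^\perp\})$. One tiny point you flag but should not worry about: where you identify two middle terms fitting into extensions with the same ends, the identification is automatic here because everything is a finite-dimensional $k$-vector space and all such extensions split.
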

	
	\begin{proof}
		For $i\in \{0,1\}$, it suffices to compare the exact sequences ($\bigstar$) and ($\bigstar\bigstar$).  The cases $i\in \{2,3\}$ follow by duality.
	\end{proof}

	\subsubsection{Tangent complex}
	
	\begin{lem}\label{tangent complex calculation}
		\begin{enumerate}
			\item We have $\gt^{i} s\cD \cong H^{i+1}(\Gamma, \g_k)$  for all $i \in \Z$.  On the other hand, $\gt^{i} s\cD_{Z} \cong \gt^{i} s\cD$ when $i \neq -1$, and $\gt^{-1} s\cD_{Z}=0$.
			\item Let $v\in S_{p}$.  Then we have $\gt^{i} s\cD_v \cong H^{i+1}(\Gamma_v, \g_k)$  for all $i \in \Z$.  On the other hand, $\gt^{i} s\cD_{v,Z} \cong \gt^{i} s\cD_v$ when $i \neq -1$, and $\gt^{-1} s\cD_{v,Z} \cong H^{0}(\Gamma_v, \g_k)/ \z_k$.
			\item Let $v\in S_{p}$.  Then we have $\gt^{i} s\cD^{\ord}_v \cong H^{i+1}(\Gamma_v, \b_k)$  for all $i \in \Z$.  On the other hand, $\gt^{i} s\cD^{\ord}_{v,Z} \cong \gt^{i} s\cD^{\ord}_v$ when $i \neq -1$, and $\gt^{-1} s\cD^{\ord}_{v,Z} \cong H^{0}(\Gamma_v, \b_k)/ \z_k$.  Moreover, $\gt^{1} s\cD_{v}^{\ord}= 0$ if $(Reg_{v}^{\ast})$ holds.
		\end{enumerate}
	\end{lem}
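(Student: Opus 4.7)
The plan is to apply the standard calculation of the tangent complex of a formally cohesive functor by evaluating it on $k \oplus \DK(V[n])$ for a finite-dimensional $k$-vector space $V$ and reading off homotopy groups. The three parts (global, local, local-ordinary) all follow this same scheme, and the $Z$-modified variants differ from the original by corrections coming from the fibration sequence $\BG(A) \to \mathcal{B}PG(A) \to B^2Z(A)$ used in section \ref{modifying the center}.

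For part (1), I would start by invoking \lemref{calculation} to identify $s\cD(A)$ with $\holim_{\bm{\Delta}X} \hofib_*(\BG(A) \to BG(k))$. For $A = k \oplus \DK(V[n])$ the square-zero nature of $A$ together with the Dold--Kan correspondence identifies the pointed fiber $\hofib_*(\BG(A) \to BG(k))$ with an Eilenberg--MacLane object built from $\g_k \otimes V$, with $\Gamma$ acting through the adjoint action composed with $\bar\rho$. Taking $\holim_{\bm{\Delta}X}$ and unfolding the bar construction recovers a model for the continuous Galois cochain complex $C^*(\Gamma, \g_k \otimes V)$ shifted in degree by $n+1$. Extracting homotopy groups and comparing with the universal formula $\cF(k \oplus \DK(C_\bullet)) \simeq \DK(\tau_{\geq 0}[L_\cF, C_\bullet])$ yields $\gt^i s\cD \cong H^{i+1}(\Gamma, \g_k)$. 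For $s\cD_Z$, the defining homotopy pullback square produces a long exact sequence in tangent cohomology: the contribution from $\sHom_\cM(*, B^2Z(A))$ is concentrated in degree $-1$ and equals $\z_k$, so under the hypothesis $H^0(\Gamma, \g_k) = \z_k$ it exactly cancels $\gt^{-1} s\cD$, leaving the higher $\gt^i$ unchanged.

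For part (2), I would rerun the argument of part (1) with $X$ replaced by $X_v$; this gives $\gt^i s\cD_v \cong H^{i+1}(\Gamma_v, \g_k)$. The $Z$-modification again cancels a copy of $\z_k$ in degree $-1$, but since $H^0(\Gamma_v, \g_k)$ need not reduce to $\z_k$, a residual $\gt^{-1} s\cD_{v,Z} \cong H^0(\Gamma_v, \g_k)/\z_k$ survives. Part (3) proceeds analogously, replacing $G$ by $B$ and $\g_k$ by $\b_k$, so $\gt^i s\cD_v^{\ord} \cong H^{i+1}(\Gamma_v, \b_k)$ with the same pattern for the $Z$-modified version. The final vanishing $\gt^1 s\cD_v^{\ord} = H^2(\Gamma_v, \b_k) = 0$ under $(Reg_v^*)$ follows from local Tate duality: $H^2(\Gamma_v, \b_k)$ is dual to $H^0(\Gamma_v, \b_k^*)$, and decomposing $\b_k = \t_k \oplus \bigoplus_{\alpha \in \Phi^+} \g_\alpha$ as a $\Gamma_v$-module via $\overline\chi_v$, the only $\Gamma_v$-invariants in $\b_k^*$ come from root lines on which $\alpha \circ \overline\chi_v = \omega$; these are absent precisely under $(Reg_v^*)$.

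The main obstacle is the identification in part (1) of the homotopy fiber $\hofib_*(\BG(k \oplus \DK(V[n])) \to BG(k))$ with the Eilenberg--MacLane object attached to the Lie algebra module $\g_k \otimes V$. This requires unfolding the bisimplicial definition of $\BG$, commuting the cofibrant replacements $c(\cO_{N_\bullet G})$ with the linearization of $\widehat{G}(A)$ supplied by the exponential map on the infinitesimal part, and verifying that the resulting identification is $\Gamma$-equivariant for the adjoint action through $\bar\rho$. Once this chain-level identification is in place, the rest of the argument is a bookkeeping extraction of Galois cohomology from a continuous cochain complex.
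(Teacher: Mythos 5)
Your proof follows exactly the route the paper takes: the paper's brief proof defers the substance to the relative computation in Section~\ref{relative} (Proposition~\ref{main} together with Lemmas~\ref{calculation} and~\ref{calculation 2}), and specializing $T=k$ there gives your part~(1), while parts~(2) and~(3) are obtained by substituting $X_v$ for $X$ and $B$ for $G$, as you say.

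One degree-shift slip in the $Z$-modification step. Writing $\cA := \sHom_{\cM}(\ast, B^2Z(-))$, the tangent complex of $\cA$ is concentrated in degree $-2$, not $-1$: the fiber $\hofib_{\ast}\bigl(B^2Z(k\oplus k\epsilon)\to B^2Z(k)\bigr)$ is a $K(\z_k,2)$, so $\pi_2\cA(k\oplus k\epsilon)=\z_k$, i.e.\ $\gt^{-2}\cA=\z_k$ and $\gt^{-1}\cA=0$. The cancellation you describe at degree $-1$ is therefore effected by the connecting map in the long exact sequence of the fibration sequence $s\cD\to s\cD_Z\to\cA$: the boundary $\gt^{-2}\cA\to\gt^{-1}s\cD$ is the inclusion $\z_k\hookrightarrow H^0(\Gamma,\g_k)$, which is an isomorphism under the hypothesis $H^0(\Gamma,\g_k)=\z_k$, forcing $\gt^{-1}s\cD_Z=0$ as well as $\gt^{-2}s\cD_Z=0$. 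In the local case the same boundary map is only injective, whence the residual $\gt^{-1}s\cD_{v,Z}\cong H^0(\Gamma_v,\g_k)/\z_k$ that you correctly predict. If instead $\gt^{-1}\cA$ were nonzero as written, the exact sequence would \emph{not} produce a clean cancellation, so the shift matters for the argument even though it does not change the final answer. With this fixed, and noting (as the paper does) that the identification $\b_k^{\ast}\cong(\g_k/\n_k)(1)$ makes the invariants argument under $(Reg_v^{\ast})$ transparent, your proof is complete and coincides with the paper's.
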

	
	\begin{proof}
		Note that $\gt^{j-i} \cF \cong \pi_{i} \cF(k \oplus k[j])$ for any formally cohesive functor $\cF$ 
		and any $i, j\geq0$.  Later in section \ref{relative} we shall give a slightly generalized version of the lemma.  
		See also \cite[Section 7.3]{GV18}.
	\end{proof}	
	
	In particular, by Lurie's criterion (\thmref{Lurie}), this lemma together with the finiteness of the cohomology groups, implies 
	
	\begin{cor} 
		The center-modified functor $s\cD_{Z}$ is pro-representable.
	\end{cor}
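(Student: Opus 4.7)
The plan is to apply Lurie's criterion (Theorem \ref{Lurie}), which requires verifying three things for $s\cD_Z$: that it is formally cohesive, that $\gt^i s\cD_Z$ has finite $k$-dimension for every $i$, and that $\gt^i s\cD_Z = 0$ for every $i < 0$. The first of these was already recorded after the construction of $s\cD_Z$ as a homotopy pullback of formally cohesive functors, so it can be quoted directly.

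For the vanishing in negative degrees, I would split into two cases. For $i \leq -2$, the lemma gives $\gt^i s\cD_Z \cong \gt^i s\cD \cong H^{i+1}(\Gamma,\g_k)$, and this vanishes because Galois cohomology is concentrated in non-negative degrees. For the critical degree $i = -1$, one cannot use $s\cD$ directly: one has $\gt^{-1} s\cD \cong H^0(\Gamma,\g_k) = \z_k \neq 0$ under the hypothesis $H^0(\Gamma,\g_k) = \z_k$, and this is exactly the obstruction to pro-representability of $s\cD$ itself. The whole point of the center modification is precisely to kill this term: the lemma asserts $\gt^{-1} s\cD_Z = 0$, which is the output of the homotopy pullback defining $s\cD_Z$ against $\sHom_{\cM}(\ast,B^2Z(A))$ (whose only nonvanishing homotopy sits in degree $2$, contributing exactly the $\z_k$ piece that cancels $H^0(\Gamma,\g_k)$).

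For finite dimensionality, in degrees $i \geq 0$ the lemma identifies $\gt^i s\cD_Z \cong H^{i+1}(\Gamma,\g_k)$, and these groups are finite for $\Gamma = G_{F,S}$ by the standard finiteness theorems for Galois cohomology of number fields with restricted ramification (recalled implicitly in the preceding subsection); moreover they vanish for $i+1 > 2$, so only finitely many degrees contribute non-trivially. Combining these three verifications and invoking Theorem \ref{Lurie} yields the pro-representability.

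The main obstacle, conceptually, is the vanishing at $i = -1$; but the technical work has already been absorbed into the construction of $s\cD_Z$ and into Lemma \ref{tangent complex calculation}, so once those are in hand the proof is a short bookkeeping argument. The remaining verification is just to assemble these pieces in the correct order and quote Lurie's criterion.
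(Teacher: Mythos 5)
Your proof is correct and matches the paper's (very brief) argument: the paper simply invokes Lurie's criterion together with Lemma~\ref{tangent complex calculation} and the finiteness of Galois cohomology. Your additional remarks on why the center modification is needed to kill $\gt^{-1}$, and on the vanishing for $i+1 > 2$, are accurate elaborations of what the paper leaves implicit.
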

	
	Now we treat the nearly ordinary case $s\cD^{\ord}_{Z}$.  Let's recall that $s\cD_{\loc,Z}= \prod_{v\in S_{p}} s\cD_{v,Z}$, $s\cD_{\loc,Z}^{\ord}= \prod_{v\in S_{p}} s\cD_{v,Z}^{\ord}$, and $s\cD^{\ord}_{Z} = s\cD_{Z} \times^{h}_{s\cD_{\loc,Z}} s\cD_{\loc,Z}^{\ord}$.  
	Recall that $\bar\rho$ satisfies $(Ord_v)$ and $(Reg_v)$ for $v\in S_{p}$, so $s\cD^{\ord}_{Z}$ is indeed the derived generalization of $\cD^{\ord}$, i.e., 
	$\pi_{0} s\cD^{\ord}_{Z}(A) \cong \cD^{\ord}(\pi_{0} A)$ for homotopy discrete $A\in \sArt$ (see \propref{simplord}).
	
	\begin{lem}
		Suppose furthermore $(Reg_{v}^{\ast})$ for $v\in S_p$.  Then $\gt^{i} s\cD^{\ord}_{Z} \cong H^{i+1}_{\ord}(\Gamma, \g)$ when $i \geq 0$, and $\gt^{i} s\cD^{\ord}_{Z} =0$ when $i<0$.
	\end{lem}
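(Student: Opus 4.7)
The plan is to apply the Mayer--Vietoris long exact sequence of tangent complexes attached to the homotopy pullback
\[
s\cD^\ord_Z = s\cD_Z \times^h_{s\cD_{\loc,Z}} s\cD^\ord_{\loc,Z},
\]
and then to match it term by term with the long exact sequence $(\bigstar)$ defining $H^\ast_\ord(\Gamma, \g_k)$ via the mapping cone. Since the tangent complex functor converts homotopy pullbacks of formally cohesive functors into fiber sequences of chain complexes, we obtain
\[
\cdots \to \gt^{i-1} s\cD_{\loc,Z} \to \gt^i s\cD^\ord_Z \to \gt^i s\cD_Z \oplus \gt^i s\cD^\ord_{\loc,Z} \to \gt^i s\cD_{\loc,Z} \to \gt^{i+1} s\cD^\ord_Z \to \cdots,
\]
whose terms are identified via \lemref{tangent complex calculation} with the relevant Galois cohomology groups, and whose connecting arrows are induced by the natural restriction $H^j(\Gamma, \g_k) \to \bigoplus_v H^j(\Gamma_v, \g_k)$ and by the inclusion $\b_k \hookrightarrow \g_k$ (whose image in degree $1$ is, by definition, $L_v$).

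For $i \geq 1$, the hypothesis $(Reg_v^\ast)$ forces $H^2(\Gamma_v, \b_k) = 0$, and since the $p$-cohomological dimension of $\Gamma_v$ is $2$, all further local terms vanish. The Mayer--Vietoris sequence then cuts into short exact sequences whose outer terms reproduce exactly those read off from $(\bigstar)$; at $i = 1$, for example, both $\gt^1 s\cD^\ord_Z$ and $H^2_\ord(\Gamma, \g_k)$ fit into
\[
0 \to \Coker\bigl(H^1(\Gamma, \g_k) \to \bigoplus_v H^1(\Gamma_v, \g_k)/L_v\bigr) \to \bullet \to \Ker\bigl(H^2(\Gamma, \g_k) \to \bigoplus_v H^2(\Gamma_v, \g_k)\bigr) \to 0,
\]
and analogous identifications hold at $i = 2$ and $i \geq 3$ (where both sides collapse to $H^{i+1}(\Gamma, \g_k)$). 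A five-lemma argument, based on the naturality of all maps, upgrades these coincidences of extensions to canonical isomorphisms.

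The remaining vanishings $\gt^{i} s\cD^\ord_Z = 0$ for $i < 0$ and the identification at $i = 0$ form the delicate boundary of the argument. At $i = -1$ the relevant arrow is $\bigoplus_v H^0(\Gamma_v, \b_k)/\z_k \to \bigoplus_v H^0(\Gamma_v, \g_k)/\z_k$, which is injective since $\z_k \subseteq \b_k \subseteq \g_k$, so $\gt^{-1} s\cD^\ord_Z = 0$, and degrees $\leq -2$ are trivial. To obtain $\gt^0 s\cD^\ord_Z \cong H^1_\ord(\Gamma, \g_k)$ one must check that the two potential excess summands $\bigoplus_v H^0(\Gamma_v, \g_k)/H^0(\Gamma_v, \b_k)$ and $\bigoplus_v \Ker(H^1(\Gamma_v, \b_k) \to H^1(\Gamma_v, \g_k))$ both vanish; via the long exact cohomology sequence of $0 \to \b_k \to \g_k \to \g_k/\b_k \to 0$, both reduce to the single assertion $H^0(\Gamma_v, \g_k/\b_k) = 0$, which is immediate from $(Reg_v)$ since the $T$-weights of $\g_k/\b_k$ are the negative roots, none of whose composition with $\overline{\chi}_v$ is trivial. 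The main obstacle is precisely this boundary bookkeeping: one must arrange that $(Reg_v)$ cleans up the $H^0$-type contributions in degrees $-1$ and $0$ while $(Reg_v^\ast)$ disposes of the local obstruction $H^2(\Gamma_v, \b_k)$ in degree $\geq 1$, with the two regularity conditions playing genuinely complementary roles and not interfering with each other.
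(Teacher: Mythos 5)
Your proposal follows essentially the same route as the paper: apply the Mayer--Vietoris long exact sequence for the tangent complexes of the homotopy pullback $s\cD^{\ord}_{Z} = s\cD_{Z}\times^h_{s\cD_{\loc,Z}}s\cD^{\ord}_{\loc,Z}$, identify the terms via \lemref{tangent complex calculation}, and compare with the defining long exact sequence $(\bigstar)$ for $H^{\ast}_{\ord}$. The paper handles the low-degree bookkeeping more tersely (it simply records that $(Reg_v)$ makes $\bigoplus_v H^0(\Gamma_v,\b_k)/\z_k \to \bigoplus_v H^0(\Gamma_v,\g_k)/\z_k$ an isomorphism), whereas you spell out the reduction to $H^0(\Gamma_v,\g_k/\b_k)=0$ via the short exact sequence $0\to\b_k\to\g_k\to\g_k/\b_k\to 0$ and the description of its $T$-weights as negative roots; you also make explicit the role of $(Reg_v^\ast)$ through local Tate duality and cohomological dimension $2$ — all consistent with the computations recorded in the proof of \propref{relordcoh}. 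The only soft point is the appeal to a "five-lemma argument based on naturality" to upgrade the matching extensions to isomorphisms; the paper is equally brief here (it says "compare with $(\bigstar)$"), but strictly speaking one should note that the comparison map of exact sequences comes from the canonical map of simplicial constructions defining $s\cD^{\ord}_{Z}$, which is made precise later in the relative setting (Section~\ref{relative} and \corref{ordcoh}). This is a presentational rather than mathematical gap, and your argument is correct.
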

	
	\begin{proof}
		We have the Mayer-Vietoris exact sequence (see \cite[Lemma 4.30 (iv)]{GV18} and \cite[Section 1.5]{Weib94}) 
		\begin{align*}
		\gt^{i} s\cD^{\ord}_{Z} \to \gt^{i} s\cD_{Z} \oplus \gt^{i} s\cD_{\loc,Z}^{\ord} \to \gt^{i} s\cD_{\loc,Z} \stackrel{[1]}{\to} \dots.
		\end{align*}
		By \lemref{tangent complex calculation}, we obtain an exact sequence 
		\begin{align*}
		0 &\to \gt^{-1} s\cD^{\ord}_{Z} \to \bigoplus_{v \in S_p} H^{0}(\Gamma_v, \b_k)/ \z_k \to \bigoplus_{v \in S_p} H^{0}(\Gamma_v, \g_k)/ \z_k \\ &\to \gt^{0} s\cD^{\ord}_{Z} \to H^{1}(\Gamma, \g_k) \oplus (\bigoplus_{v \in S_p} H^{1}(\Gamma_{v}, \b_k)) \to \bigoplus_{v \in S_p} H^{1}(\Gamma_{v}, \g_k) \\ &\to \gt^{1} s\cD^{\ord}_{Z} \to H^{2}(\Gamma, \g_k) \to \bigoplus_{v \in S_p} H^{2}(\Gamma_{v}, \b_k)  \\ &\to \gt^{2} s\cD^{\ord}_{Z} \to 0
		\end{align*}
		By assumtion $(Reg_v)$, the map $H^{0}(\Gamma_v, \b_k)/ \z_k \to H^{0}(\Gamma_v, \g_k)/ \z_k$ is an isomorphism.  The conclusion follows from comparing the above exact sequence with ($\bigstar$).
	\end{proof}
	
	In particular $\gt^{-1} s\cD^{\ord}_{Z}=0$ (note that for this we don't need $(Reg_{v}^{\ast})$).  By Lurie's criterion (\thmref{Lurie}) and the finiteness of the cohomology groups, we have the following corollary: 
	
	\begin{cor} 
		The functor $s\cD^{\ord}_{Z}$ is pro-representable.
	\end{cor}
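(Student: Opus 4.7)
The plan is to apply Lurie's criterion, Theorem \ref{Lurie}, which requires three ingredients: formal cohesiveness of $s\cD^{\ord}_Z$, finite-dimensionality of all $\gt^i s\cD^{\ord}_Z$, and vanishing of $\gt^i s\cD^{\ord}_Z$ for $i<0$. Each of these is already either established or a direct consequence of results preceding the statement, so the role of the proof is to assemble them.

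First, I would recall that $s\cD^{\ord}_Z$ is defined as the homotopy fiber product $s\cD_Z \times^h_{s\cD_{\loc,Z}} s\cD_{\loc,Z}^{\ord}$, and that each of $s\cD_Z$, $s\cD_{\loc,Z}$, $s\cD_{\loc,Z}^{\ord}$ has been shown to be formally cohesive (by the discussion in Section \ref{modifying the center} and by the proposition asserting that the homotopy limit of formally cohesive functors is formally cohesive). Hence $s\cD^{\ord}_Z$ is formally cohesive.

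Next, I would invoke the preceding lemma identifying $\gt^i s\cD^{\ord}_Z \cong H^{i+1}_{\ord}(\Gamma, \g_k)$ for $i\geq 0$ under the assumption $(Reg_v^\ast)$, and the statement that $\gt^i s\cD^{\ord}_Z = 0$ for $i<0$ (for which $(Reg_v^\ast)$ is not needed, as noted). For finite-dimensionality it suffices to observe that each term entering the Mayer-Vietoris sequence used to compute $\gt^i s\cD^{\ord}_Z$—namely $H^i(\Gamma,\g_k)$, $H^i(\Gamma_v,\g_k)$, and $H^i(\Gamma_v,\b_k)$—is a finite-dimensional $k$-vector space. For $\Gamma = G_{F,S}$ with $S = S_p\cup S_\infty$ finite, this is the standard finiteness of Galois cohomology of $S$-ramified extensions with finite coefficients, and the local cohomologies at $v\in S_p$ are finite by the usual local duality for $p$-adic fields. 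Consequently $H^{i+1}_{\ord}(\Gamma,\g_k)$ is finite-dimensional, and moreover it vanishes for $i$ sufficiently large because all of $H^\ast$, $H^\ast(\Gamma_v,-)$ vanish in degrees $\geq 3$ and $\geq 2$ respectively for the groups in question.

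With formal cohesiveness, the vanishing $\gt^i s\cD^{\ord}_Z = 0$ for $i<0$, and the finite-dimensionality of $\gt^i s\cD^{\ord}_Z$ for every $i$ in hand, Theorem \ref{Lurie} applies directly to yield (sequential) pro-representability of $s\cD^{\ord}_Z$. There is no serious obstacle: the entire content of the argument is really packaged in the previous lemma, and the corollary is essentially a formal application of Lurie's criterion. The only point requiring minor care is checking that one has an actual upper bound on the degrees where the tangent groups do not vanish (so that sequential, rather than just formal, pro-representability holds); this follows from the cohomological dimension bound for $G_{F,S}$ with $p$-power coefficients when $p>2$.
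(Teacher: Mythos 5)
Your proof is correct and follows the same route as the paper: the preceding lemma gives the vanishing $\gt^{i}s\cD^{\ord}_{Z}=0$ for $i<0$, finiteness of the $\ord$-cohomology groups (hence of each $\gt^{i}$) follows from finiteness of global and local Galois cohomology, and Lurie's criterion (\thmref{Lurie}) then yields pro-representability. One small remark: the statement of \thmref{Lurie} as given only requires finite-dimensionality of each $\gt^{i}$ and vanishing for $i<0$, with no need for an upper bound on the degrees where $\gt^{i}$ is nonzero, so your last caveat about cohomological dimension, while true, is superfluous for the corollary.
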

	
	Let $R^{s,\ord}$ be a representing (pro-)simplicial ring.  Since $\pi_{0} s\cD^{\ord}_{Z}(A) \cong \cD^{\ord}(A)$ for $A \in \Art_{\O}$, the ring $\pi_{0}R^{s,\ord}$ represents the classical nearly ordinary deformation functor $\cD^{\ord}$.

	\subsection{Relative derived deformations and relative tangent complexes}\label{relative}
	
	Let $T \in \Art_{\O}$ and let $\rho_{T}\colon \Gamma \to G(T)$ be a nearly ordinary representation. 
	For $v\in S_{p}$, we write $\rho_{T,v}$ for the restriction of $\rho_{T}$ on $\Gamma_{v}$ and 
	we suppose the image of $\rho_{T,v}$ lies in $B(T)$ (more precisely, we should say the image of some conjugation of $\rho_{T,v}$ lies in $B(T)$, but there is no crucial difference).  
	Let $X$ and $X_v$ be the pro-simplicial sets associated to the profinite groups $\Gamma$ and $\Gamma_v$.
	We identify $\rho_{T}$ as a map of (pro-)simplicial sets $X \to BG(T) \to \BG(T)$ (here $BG(T)$ is 
	the classical classifying space of the finite group $G(T)$ and $\BG(T)$ is a fibrant replacement, see \defref{BG}) and identify $\rho_{T,v}$ 
	as $X_{v} \to BB(T) \to \BB(T) \to \BG(T)$.
	
	Let's consider the derived deformations functors over $\rho_{T}$.
	
	\begin{de}
		\begin{enumerate}
			\item Let $s\cD_{\rho_{T}} \colon\sArtT \to \sSet$ be the functor 
			$$A \mapsto \hofib_{\rho_{T}} ( \sHom_{\sSet}(X,\BG(A)) \to \sHom_{\sSet}(X,\BG(T))).$$
			\item For $v\in S_{p}$, let $s\cD_{\rho_{T,v}} \colon \sArtT \to \sSet$ 
			be the functor $$A \mapsto \hofib_{\rho_{T,v}} ( \sHom_{\sSet}(X_{v},\BG(A)) \to \sHom_{\sSet}(X_{v},\BG(T))).$$
			\item For $v\in S_{p}$, let $s\cD^{\ord}_{\rho_{T,v}} \colon \sArtT \to \sSet$ 
			be the functor $$A \mapsto \hofib_{\rho_{T,v}} ( \sHom_{\sSet}(X_{v},\BB(A)) \to \sHom_{\sSet}(X_{v},\BB(T))).$$
		\end{enumerate}
	\end{de}
	
	Our goal is to prove the following proposition (see also \cite[Example 4.38 and Lemma 5.10]{GV18}):
	
	\begin{pro}\label{main}
		Let $M$ be a finite module over an arbitrary Artin ring $T$.  Then for $i, j \geq 0$ we have $$\pi_{i} s\cD_{\rho_{T}}(T\oplus M[j]) \cong H^{1+j-i} (\Gamma, \g_{T}\otimes_{T} M).$$ 
	\end{pro}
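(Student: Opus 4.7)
The strategy is to identify $s\cD_{\rho_T}(T\oplus M[j])$ with the space of sections of an Eilenberg--MacLane fibration over $X$, and to read off its homotopy groups as Galois cohomology of $\g_T\otimes_T M$.

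\emph{Step 1: fiber of $\BG(T\oplus M[j])\to\BG(T)$.} I first analyze the map $\BG(T\oplus M[j])\to\BG(T)$ of fibrant objects in $\cM$. Since $M[j]$ is a square-zero simplicial ideal of $T\oplus M[j]$ and $G$ is smooth over $\cO$, in each simplicial degree the kernel of $G(T\oplus M_p)\to G(T)$ is canonically identified with $\g_T\otimes_T M_p$. Passing to the classifying space $\mathcal{B}$ and applying the Dold--Kan correspondence (the bar construction on a simplicial abelian group corresponds to a shift in chain degree), the homotopy fiber of $\BG(T\oplus M[j])\to\BG(T)$ over any basepoint is weakly equivalent to the Eilenberg--MacLane object $K(\g_T\otimes_T M,\,j+1)$. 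The smoothness hypothesis on $G$ is what guarantees that this level-wise identification passes homotopically through the cofibrant-replacement/trivial-cofibration-fibration factorization in Definition~\ref{BG}.

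\emph{Step 2: from homotopy fiber to sections.} Because $\BG(T\oplus M[j])\to\BG(T)$ is a fibration of fibrant objects in $\cM$ and $\sHom_{\cM}(X,-)$ is right Quillen (for $X$ replaced by a cofibrant model of the pro-simplicial set), the induced map on function complexes is again a fibration. Hence $s\cD_{\rho_T}(T\oplus M[j])$, the homotopy fiber over $\rho_T$, is weakly equivalent to the space of lifts of $\rho_T\colon X\to\BG(T)$ to $\BG(T\oplus M[j])$, equivalently the space of sections of the pulled-back fibration $\rho_T^{\ast}\BG(T\oplus M[j])\to X$. This pullback has fiber $K(\g_T\otimes_T M,\,j+1)$ twisted by the action of $\pi_1 X=\Gamma$ on $\g_T\otimes_T M$ given by $\Ad\circ\rho_T$.

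\emph{Step 3: identification with Galois cohomology.} It is a standard computation (going through Dold--Kan and comparing with the continuous cochain complex $C^{\bullet}_{\cts}(\Gamma,V)$) that for a continuous $\Gamma$-module $V$ the space of sections of a twisted Eilenberg--MacLane local system $K(V,n)$ over $B\Gamma$ has homotopy groups $H^{n-i}_{\cts}(\Gamma,V)$. Applying this with $n=j+1$ and $V=\g_T\otimes_T M$ (a finite module, so continuity is automatic) yields
\[
\pi_i\, s\cD_{\rho_T}(T\oplus M[j])\ \cong\ H^{j+1-i}(\Gamma,\g_T\otimes_T M),
\]
as claimed. The pro-object nature of $X=(B\Gamma_i)_i$ is handled by passing to the filtered colimit defining $\sHom(X,-)$, which matches continuous group cohomology since $M$ is finite.

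\emph{Main obstacle.} The delicate point is Step~1. The functor $\BG$ is not literally the nerve of $G(-)$ on a simplicial ring but is obtained through a cofibrant replacement of the cosimplicial Hopf-theoretic object $\cO_{N_\bullet G}$ followed by a trivial-cofibration--fibration factorization of the diagonal of a bisimplicial set. One must check that, up to weak equivalence, this construction really turns the exact sequence $\g_T\otimes_T M[j]\to G(T\oplus M[j])\to G(T)$ into a fibration sequence with the expected Eilenberg--MacLane fiber. Smoothness of $G$ (and of $Z$) over $\cO$ is the essential input, ensuring degreewise surjectivity and a free-module identification of the kernel; the remainder is formal homotopy theory that one can cross-reference with \cite[Example 4.38 and Lemma 5.10]{GV18}.
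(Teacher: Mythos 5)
Your strategy is the same as the paper's: identify the (twisted) homotopy fiber of $\BG(T\oplus M[j])\to\BG(T)$ as an Eilenberg--MacLane object $K(\g_T\otimes_T M,\,j+1)$ with $\Gamma$-action $\Ad\circ\rho_T$, pass to the space of sections over $X$ (the paper phrases this as $\holim_{\bm{\Delta}X}\hofib_\ast(\BG(A)\to\BG(T))$, which is the same thing), and then compute the homotopy groups of that section space via a standard cosimplicial-replacement/Dold--Kan argument giving the group cochain complex $C^\bullet(\Gamma,\g_T\otimes_T M)$. Your Steps 2 and 3 match the paper's Lemmas 4.19 (\lemref{calculation}) and 4.21 (\lemref{calculation 2}) essentially verbatim, including the reduction to $\Gamma$ finite via the filtered colimit.

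The only real difference is in Step 1, where you identify $\hofib_\ast(\BG(T\oplus M[j])\to\BG(T))\simeq K(\g_T\otimes_T M,\,j+1)$. You argue this by a direct levelwise computation of the kernel $\widehat{G}(T\oplus M[j]_p)\cong\g_T\otimes_T M[j]_p$, using smoothness of $G$, and then appeal to Dold--Kan and the bar construction to get the degree shift. The paper instead sidesteps the explicit unwinding of the cofibrant replacement $c(\O_{N_\bullet G})$ and the diagonal: it observes that $A\mapsto\hofib_\ast(\BG(A)\to\BG(T))$ preserves homotopy pullbacks, uses the homotopy pullback square
\[
\xymatrix{T\oplus M[j-1]\ar[r]\ar[d]&T\ar[d]\\ T\ar[r]&T\oplus M[j]}
\]
to shift degrees and induct down to $j=0$, where the statement reduces to the classical case $BG(T\oplus M)\to BG(T)$. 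That inductive argument is cleaner precisely because it never has to check that the levelwise Lie-algebra identification survives the various functorial replacements in \defref{BG} --- which is exactly the delicate point you flag yourself at the end. Your route would work too, but making it rigorous requires more bookkeeping with $\Bi(-)$ and the diagonal; the paper's reduction via homotopy-pullback preservation (already established in the proof of formal cohesiveness) is the economical choice. Everything else is sound.
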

	
	Note that $\sHom_{\sSet}(X,-)$ is defined by the filtered colimit $\varinjlim_{i}\sHom_{\sSet}(B\Gamma_{i},-)$, which commutes with homotopy pullbacks.  	So it suffices to prove the proposition with $\Gamma$ replaced by $\Gamma_i$ and $X$ replaced by $B\Gamma_{i}$.  To simplify the notations, we suppose $\Gamma$ is a finite group during the proof.
	
	\begin{lem}\label{calculation}
		Let $A\in \sArtT$.  Then $s\cD_{\rho_{T}}(A)$ is weakly equivalent to 
		$$\holim_{\bm{\Delta}X} \hofib_{\ast}(\BG(A) \to \BG(T)).$$
	\end{lem}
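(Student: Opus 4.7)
The plan is to combine the standard decomposition of $X = B\Gamma$ as a homotopy colimit of its simplices with the commutation of homotopy fibers and homotopy limits. Since $X = \varinjlim_{(n,\sigma) \in \bm{\Delta}X} \bm{\Delta}[n]$ canonically and each $\bm{\Delta}[n]$ is contractible, for any fibrant simplicial set $Y$ one has a natural weak equivalence
$$\sHom_{\sSet}(X, Y) \simeq \holim_{\bm{\Delta}X} Y,$$
where $Y$ on the right is viewed as the constant diagram over $\bm{\Delta}X$. This is the standard Bousfield--Kan type formula (see e.g.\ \cite[Theorem 18.1.10]{Hir03}); its essential ingredient is that each path object $Y^{\bm{\Delta}[n]}$ is weakly equivalent to $Y$ for $Y$ fibrant.

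I would apply this formula to the two fibrant simplicial sets $\BG(A)$ and $\BG(T)$. Since $\BG(A) \to \BG(T)$ is a fibration by construction, this produces an equivalence between the fibration $\sHom(X,\BG(A)) \to \sHom(X,\BG(T))$ and its holim analogue $\holim_{\bm{\Delta}X} \BG(A) \to \holim_{\bm{\Delta}X} \BG(T)$, compatibly with the distinguished element $\rho_T$ on the target side. Taking the homotopy fiber over $\rho_T$ on the left gives $s\cD_{\rho_T}(A)$ by definition, while on the right the commutation of homotopy fibers with homotopy limits yields
$$\holim_{(n,\sigma) \in \bm{\Delta}X} \hofib_{\rho_T(\sigma)}\bigl(\BG(A) \to \BG(T)\bigr),$$
with the basepoint at each simplex $\sigma$ equal to $\rho_T(\sigma)$.

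It then remains to replace the varying basepoints $\rho_T(\sigma)$ by the canonical basepoint $\ast$ of $\BG(T)$. This is legitimate because $\BG(T)$ is connected --- being the fibrant replacement of the one-vertex simplicial set $BG(T)$ --- so that the canonical comparison $\hofib_b(\BG(A) \to \BG(T)) \simeq \hofib_\ast(\BG(A) \to \BG(T))$ is a weak equivalence for every vertex $b$. The main subtlety, and the step requiring the most care, is to upgrade these pointwise equivalences into an equivalence of $\bm{\Delta}X$-diagrams so that the corresponding homotopy limits are weakly equivalent. The cleanest way is to work directly with the pullback fibration $X \times_{\BG(T)} \BG(A) \to X$: its fibers are canonically weakly equivalent to $\hofib_\ast(\BG(A) \to \BG(T))$ via path lifting in the ambient fibration, and these canonical identifications automatically vary coherently over $\bm{\Delta}X$. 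This yields the desired weak equivalence.
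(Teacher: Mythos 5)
Your first two paragraphs reproduce the paper's argument faithfully: decompose $X$ as $\hocolim_{(\bm{\Delta}X)^{\op}}\ast$, convert $\sHom_{\sSet}(X,-)$ into $\holim_{\bm{\Delta}X}$ using \cite[Theorem 18.1.10]{Hir03}, and invoke the commutation of homotopy limits with homotopy pullbacks. This is exactly the paper's strategy.

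Where you go astray is the basepoint discussion in your final paragraph, which is an unnecessary detour and, as written, not airtight. You worry that at the simplex $\sigma$ the relevant basepoint is $\rho_T(\sigma)$, a potentially varying vertex of $\BG(T)$. But this issue does not actually arise: $\rho_T$ is by construction the composite $X\to BG(T)\hookrightarrow\BG(T)$, and $BG(T)$ has a single vertex, so the image in $\BG(T)$ of every vertex of $X$ under $\rho_T$ is the canonical basepoint $\ast$. Equivalently — and this is the observation the paper makes — under the identification $\sHom_{\sSet}(X,\BG(T))\simeq\holim_{\bm{\Delta}X}\BG(T)$, the point $\rho_T$ corresponds to the map $\holim_{\bm{\Delta}X}\ast\to\holim_{\bm{\Delta}X}\BG(T)$, a point which lands at $\ast$ at every node of the diagram. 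Hence the homotopy fiber over $\rho_T$ is directly $\holim_{\bm{\Delta}X}\hofib_{\ast}(\BG(A)\to\BG(T))$ with no basepoint change required. Moreover, the fix you propose — appealing to ``canonical identifications via path lifting'' that ``automatically vary coherently over $\bm{\Delta}X$'' — restates the coherence problem rather than solving it: path lifting identifications are canonical only up to a choice of path, and coherence of those choices over the (large) category $\bm{\Delta}X$ is precisely what would have to be checked if the issue were genuine. You should simply note that $\rho_T$ factors through the one-vertex $BG(T)$ and delete the rest of the third paragraph.
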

	
	\begin{proof}
		By \cite[Proposition 18.9.2]{Hir03}, $X$ is weakly equivalent to $\hocolim_{(\bm{\Delta}X)^\op} \ast$ (i.e., the homotopy colimit of the single-point simplicial set indexed by $(\bm{\Delta}X)^\op$).  Hence (see \cite[Theorem 18.1.10]{Hir03}) $$\sHom_{\sSet}(X, \BG(A)) \simeq \holim_{\bm{\Delta}X} \sHom_{\sSet}(\ast, \BG(A)) \simeq \holim_{\bm{\Delta}X}\BG(A),$$ and $$\sHom_{\sSet}(X, \BG(T)) \simeq \holim_{\bm{\Delta}X} \sHom_{\sSet}(\ast, \BG(T)) \simeq \holim_{\bm{\Delta}X}\BG(T).$$
		Note that $\rho_{T}$, as the single-point simplicial subset of $\sHom_{\sSet}(X, \BG(T))$, is identified with $\holim_{\bm{\Delta}X}\ast \to \holim_{\bm{\Delta}X}\BG(T)$.  Since homotopy limits commute with homotopy pullbacks, we conclude that 
		$$s\cD_{\rho_{T}}(A) \simeq \holim_{\bm{\Delta}X} \hofib_{\ast}(\BG(A) \to \BG(T)).$$
	\end{proof}
	
	Let's first analyse $\hofib_{\ast}(\BG(A) \to \BG(T))$.
	
	\begin{lem}
		The homotopy groups of $\hofib_{\ast}(\BG(A) \to \BG(T))$ are trivial except at degree $j+1$, where it is $\g_{T}\otimes_{T} M$.
	\end{lem}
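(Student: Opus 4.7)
The plan is to identify the homotopy fiber with an Eilenberg--MacLane object of type $K(\g_{T}\otimes_{T}M,j+1)$; morally, this expresses the fact that $\BG$ is formally cohesive with tangent complex $\g_{T}[1]$ at the identity $T$-point. First I would pass to the weakly equivalent bisimplicial model $\diag\Bi(A)\to\diag\Bi(T)$ (fibrant replacement preserves the homotopy fiber). Since $A=T\oplus M[j]$ admits the canonical splitting $T\hookrightarrow A$, $t\mapsto(t,0)$, and $A^{\Delta[q]}\twoheadrightarrow T^{\Delta[q]}$ is a degreewise surjection for every $q$, the map $\Bi(A)\to \Bi(T)$ is bidegreewise surjective with a canonical section, and the homotopy fiber may therefore be computed as the strict fiber over the identity basepoint.

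Next I would analyse this fiber bidegreewise. The surjection $A^{\Delta[q]}\twoheadrightarrow T^{\Delta[q]}$ has square-zero kernel $(M[j])^{\Delta[q]}$, so the classical correspondence between square-zero extensions and derivations applies: the set of lifts of a given map $c(\O_{N_pG})\to T^{\Delta[q]}$ to $c(\O_{N_pG})\to A^{\Delta[q]}$ is the set of $0$-simplices of $\sHom_{c(\O_{N_pG})}(L_{c(\O_{N_pG})/\O},(M[j])^{\Delta[q]})$. Smoothness of $G$ forces $L_{c(\O_{N_pG})/\O}\simeq \Omega^{1}_{\O_{N_pG}/\O}$ to be concentrated in degree zero; base-changing along the identity point $\O_{N_pG}\to\O$ makes this the free module $(\g_{\O}^{\vee})^{p}$. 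Passing to $0$-simplices and using $((M[j])^{\Delta[q]})_{0}=(M[j])_{q}$ yields
\[
F_{p,q}=\bigl(\g_{T}\otimes_{T}(M[j])_{q}\bigr)^{p}.
\]

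For fixed $q$, the horizontal simplicial structure coming from the cosimplicial structure on $\O_{N_{\bullet}G}$ is precisely the bar construction, so $F_{\bullet,q}\cong B(\g_{T}\otimes_{T}(M[j])_{q})$; vertically (in $q$), $F_{p,\bullet}$ is the simplicial abelian group $(\g_{T}\otimes_{T}M[j])^{p}$. Thus $F_{\bullet,\bullet}$ is a bisimplicial model for the bar construction of the simplicial abelian group $\g_{T}\otimes_{T}M[j]\simeq K(\g_{T}\otimes_{T}M,j)$, and its diagonal is weakly equivalent to $B(\g_{T}\otimes_{T}M[j])\simeq K(\g_{T}\otimes_{T}M,j+1)$, whose homotopy is concentrated in degree $j+1$ with value $\g_{T}\otimes_{T}M$, as required. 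The main technical obstacle is the derivation identification in the middle step: one must be sure that mapping out of the cofibrant resolution $c(\O_{N_pG})$ into the simplicial thickening $A^{\Delta[q]}$ produces exactly $\Omega^{1}$-valued derivations into $(M[j])^{\Delta[q]}$, with no higher-order corrections from the cotangent complex. This is precisely the role of the smoothness hypothesis on $G$; the subsequent identification of the horizontal face maps with the bar construction and the bisimplicial diagonal comparison are standard.
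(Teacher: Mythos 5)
Your argument is correct in substance, but it takes a genuinely different route from the paper's. The paper's proof is soft and formal: it quotes the connectivity estimate of \cite[Corollary 5.3]{GV18} to kill the homotopy groups up to degree $j$, then uses the fact that $A\mapsto\hofib_{\ast}(\BG(A)\to\BG(T))$ sends the homotopy pullback square relating $T\oplus M[j-1]$ and $T\oplus M[j]$ to a homotopy pullback square, yielding the delooping isomorphism $\pi_{j+k}\hofib_{\ast}(\BG(T\oplus M[j])\to\BG(T))\cong\pi_{k}\hofib_{\ast}(\BG(T\oplus M[0])\to\BG(T))$ and reducing everything to the classical case $j=0$, where the fiber is $B\widehat{G}(T\oplus M)=B(\g_{T}\otimes_{T}M)$. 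Your proof instead recomputes the fiber from scratch inside the bisimplicial model $\Bi(-)$, identifying it with the bar construction of the simplicial abelian group $\g_{T}\otimes_{T}M[j]$. What your approach buys is an explicit Eilenberg--MacLane model of the fiber (and in effect a direct proof that the tangent complex of $\BG$ at the trivial point is $\g[1]$), at the cost of redoing the analysis that \cite{GV18} already packaged into Lemma 5.2 and Corollary 5.3; the paper's induction is shorter precisely because it leans on those inputs.

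Two steps deserve more care than you give them. First, the inference ``bidegreewise surjective with a canonical section, therefore the homotopy fiber is the strict fiber'' is not valid as stated: a degreewise surjection of simplicial sets need not be a Kan fibration, and the diagonal of a bisimplicial map does not automatically turn levelwise fibers into homotopy fibers. What saves you is that each row $\Bi(A)_{p,\bullet}=\sHom_{{}_{\O}\backslash\sCR}(c(\O_{N_{p}G}),A)\to\Bi(T)_{p,\bullet}$ is a fibration by SM7 (cofibrant source, fibration $A\twoheadrightarrow T$), with fiber a simplicial abelian group, so a Bousfield--Friedlander-type diagonal theorem (cf. \cite{CR05}) applies; this, not the derivation identification, is the real technical hinge of your route. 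Second, the equality $F_{p,q}=(\g_{T}\otimes_{T}(M[j])_{q})^{p}$ is only a weak equivalence of columns for each fixed $p$: the strict set of lifts is $\Hom(\Omega_{c(\O_{N_{p}G})/\O}\otimes_{c(\O_{N_{p}G})}\O,\,(M[j])^{\Delta[q]})$, and $\Omega_{c(\O_{N_{p}G})/\O}\otimes\O$ is merely quasi-isomorphic, not isomorphic, to the constant module $(\g_{\O}^{\vee})^{p}$; since $0$-simplices of mapping spaces are not homotopy invariant, you should phrase this as a levelwise weak equivalence of bisimplicial abelian groups (compatible with the horizontal bar structure), which still suffices to conclude after taking diagonals. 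With these two points repaired, your computation does establish the lemma.
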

	
	\begin{proof}
		Note that $A\mapsto \hofib_{\ast}(\BG(A) \to \BG(T))$ preserves weak equivalences and homotopy pullbacks.
		
		Since $T\oplus M[j] \to T$ is $j$-connected, the map $\BG(T\oplus M[j]) \to \BG(T)$ is $(j+1)$-connected (see \cite[Corollary 5.3]{GV18}), and the homotopy groups of the homotopy fiber vanish up to degree $j$.  Since the functor $A\mapsto \hofib_{\ast}(\BG(A) \to \BG(T))$ maps the homotopy pullback square
		\begin{displaymath}
		\xymatrix{
			T\oplus M[j-1] \ar[r]\ar[d] & T \ar[d] \\
			T \ar[r] & T \oplus M[j]}
		\end{displaymath}
		to a homotopy pullback square, we get 
		$$\pi_{j+k} \hofib_{\ast}(\BG(T \oplus M[j]) \to \BG(T)) \cong \pi_{j+k-1} \hofib_{\ast}(\BG(T \oplus M[j-1]) \to \BG(T))$$  
		for any $k\geq 0$.  Consequently 
		$$\pi_{j+k} \hofib_{\ast}(\BG(T \oplus M[j]) \to \BG(T)) \cong \pi_{k} \hofib_{\ast}(\BG(T \oplus M[0]) \to \BG(T)),$$
		and $\hofib_{\ast}(\BG(T \oplus M[j]) \to \BG(T))$ has homotopy groups concentrated on degree $j+1$, where it is $\g_{T}\otimes_{T} M$.  
	\end{proof}
	
	Let $Y$ be the $\bm{\Delta}X$-diagram in $\sSet$ (i.e, functor $\bm{\Delta}X \to \sSet$) which takes the value $\hofib_{\ast}(\BG(A) \to \BG(T))$.  Then $Y$ is a local system (see \cite[Definition 4.34]{GV18}, it's called the cohomological coefficient system in \cite[Page 28]{GM13}) on $X$.  There is hence a $\pi_{1} (X,\ast)$-action on the homotopy group $\g_{T}\otimes_{T} M$.  By unwinding the constructions, we see this is the conjugacy action of $\rho_{T}$ on $\g_{T}\otimes_{T} M$.
	
	It suffices to calculate $\holim Y$.  Under the Dold-Kan correspondence, we may identify $\hofib_{\ast}(\BG(A) \to \BG(T))$ with the chain complex with homology $\g_{T}\otimes_{T} M$ concentrated on degree $j+1$.  But in fact it's more convenient to regard $\hofib_{\ast}(\BG(A) \to \BG(T))$ as a cochain complex with cohomology $\g_{T}\otimes_{T} M$ concentrated on degree $-(j+1)$, because the homotopy limit of cochain complexes is drastically simple (see \cite[Section 19.8]{Dug08}).  By shifting degrees, it suffices to suppose that the cohomology is concentrated on degree $0$.
	
	\begin{lem}\label{calculation 2}
		Let $N$ be a $T[\Gamma]$-module, and we regard $N$ as a cochain complex concentrated on degree $0$.  Let $Y$ be the $\bm{\Delta}X$-diagram in $\Ch^{\geq 0}(T)$ (i.e, functor $\bm{\Delta}X \to \Ch^{\geq 0}(T)$) which takes the value $N$.  Then $\holim Y \simeq C^{\bullet}(\Gamma, N).$  Here $C^{\bullet}(\Gamma, N)$ is the cochain which computes the usual group cohomology.
	\end{lem}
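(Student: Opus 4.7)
The plan is to compute the homotopy limit via the Bousfield--Kan cosimplicial replacement and identify the result with the standard bar complex computing group cohomology. Recall that for any diagram $Y \colon \bm{\Delta}X \to \Ch^{\geq 0}(T)$, the homotopy limit $\holim Y$ is (via Dold--Kan and totalization of the resulting bicomplex) weakly equivalent to the totalization of the cosimplicial cochain complex
$$[n] \mapsto \prod_{\sigma \colon \bm{\Delta}[n] \to X} Y(\sigma),$$
with coface and codegeneracy maps induced by the simplicial structure on $X$ combined with the functoriality of $Y$ on morphisms of $\bm{\Delta}X$. In the case at hand, $X = B\Gamma$ has $X_n = \Gamma^n$, and since $Y$ is constant with value $N$ on objects, the above cosimplicial object becomes
$$[n] \mapsto \Map(\Gamma^n, N),$$
concentrated in cochain degree $0$.

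The next step is to identify the cosimplicial structure explicitly. The face operators of the nerve $B\Gamma$ are the familiar ones: $d_0(\gamma_1,\ldots,\gamma_n) = (\gamma_2,\ldots,\gamma_n)$, $d_i(\gamma_1,\ldots,\gamma_n) = (\gamma_1,\ldots,\gamma_i\gamma_{i+1},\ldots,\gamma_n)$ for $1 \leq i < n$, and $d_n(\gamma_1,\ldots,\gamma_n) = (\gamma_1,\ldots,\gamma_{n-1})$. The nontrivial twisting comes from the local system structure of $Y$: the morphism in $\bm{\Delta}X$ corresponding to $d_0$ must be sent to the action of $\gamma_1$ on $N$ (parallel transport along the first edge), while $d_i$ for $i \geq 1$ is sent to the identity, because the local system is determined by the conjugation action of $\rho_T$ on $\g_T \otimes_T M$. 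The induced coface maps on $\Map(\Gamma^\bullet, N)$ are then exactly those of the standard inhomogeneous bar complex, so the associated (normalized) cochain complex of this cosimplicial abelian group is precisely $C^\bullet(\Gamma, N)$. Since $N$ is concentrated in cochain degree $0$, the bicomplex collapses and no further totalization is needed, yielding $\holim Y \simeq C^\bullet(\Gamma, N)$.

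The main obstacle is the verification in the second step that the local system $Y$ assigns to the face morphism $d_0$ precisely the action of $\gamma_1$ on $N$, and not, for example, $\gamma_1^{-1}$ or an action twisted by the inverse of a cocycle. This requires tracing carefully through the construction of $Y$ from the fibration $\hofib_\ast(\BG(A) \to \BG(T))$, identifying the $\pi_1(X,\ast)$-action on $\g_T \otimes_T M$ with the conjugation action of $\rho_T$ (as claimed in the preceding discussion), and then lifting this pointwise action to the level of morphisms in $\bm{\Delta}X$ so that the sign conventions and the order of composition match those built into $C^\bullet(\Gamma, N)$. Once this bookkeeping is carried out, the identification of the cosimplicial object with the bar complex is formal, and the chain of weak equivalences above gives the desired conclusion.
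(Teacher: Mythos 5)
Your proof follows essentially the same strategy as the paper's: replace the homotopy limit over $\bm{\Delta}X$ by the Bousfield--Kan cosimplicial replacement (the paper cites Hirschhorn Lemma~18.9.1 for this), observe that when the local system takes values in chain complexes concentrated in degree~$0$ the totalization collapses to the usual alternating-sum complex, and identify the result with the group cochain complex. The paper invokes \cite[Proposition 19.10]{Dug08} for the collapse, which is essentially the same totalization-of-a-double-complex step you describe.

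There is one place where your account diverges from the paper's and from the cited reference, and it is precisely the step you flag as the ``main obstacle''. You claim the nontrivial twist of the local system sits on the coface $d_0$ (parallel transport by $\gamma_1$). But the Hirschhorn/Bousfield--Kan cosimplicial replacement used in the paper indexes the product over $\sigma \in X_n$ by the value $Y(\sigma_n)$ at the \emph{terminal} vertex of the simplex; consequently the only coface that changes the terminal vertex is $d_{n+1}$, and that is where the parallel transport appears. The paper's proof spells this out: the map $D(j_n) \to D(i_{n+1})$ is the identity for $k \neq n+1$ and is $D(i_n \to i_{n+1})$ for $k=n+1$. So your assignment of the twist to $d_0$ does not match the cited construction. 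Fortunately this is a convention issue rather than a fatal error: both the ``twist at $d_0$'' and ``twist at $d_{n+1}$'' cochain complexes compute $H^\bullet(\Gamma,N)$ (they are interchanged by the reflection $r$ which the paper discusses elsewhere), so the quasi-isomorphism in the statement of the lemma still holds. But if you want your proof to actually run through Hirschhorn's Lemma~18.9.1, you should correct the identification of the coface structure so that the twist sits on the last coface map; otherwise the intermediate cochain complex you write down is not literally the one produced by the cosimplicial replacement, and the claimed ``formal'' identification with the bar complex would need an extra reflection step.
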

	
	\begin{proof}
		By \cite[Lemma 18.9.1]{Hir03}, $\holim Y$ is naturally isomorphic to the homotopy limit of the cosimplicial object $Z$ in $\Ch^{\geq 0}(T)$ whose codegree $[n]$ term is $\prod_{\sigma \in X_{n}} Y_{\sigma} = \prod_{\sigma \in X_{n}} N$.  We have to explain the coface maps of $Z$.  For this purpose we describe $Z=(Z^n)_n$ as follows:
		
		The $T[\Gamma]$-module $N$ defines a functor $D$ from the one-object groupoid $\bullet$ with $\End(\bullet)=\Gamma$ to $\Ch^{\geq 0}(T)$, such that $D(\bullet) = N$, and $D(\Gamma)$ acts on $N$ by the $\Gamma$-action.  Then $Z^n$ is $\prod\limits_{i_0 \to \dots \to i_n} D(i_n)$ (all $i_k$'s are equal to the object $\bullet$ here, but keeping the difference helps to clarify the process).  Let $d_k$ be the $k$-th face map from $\Gamma^{n+1}$ to $\Gamma^n$, in other words, $d_k$ maps $(i_0 \to \dots \to i_{n+1})$ to $(j_0 \to \dots \to j_{n})$ by "covering up" $i_k$.  Then the corresponding $D(j_n) \to D(i_{n+1})$ is the identity map if $k\neq n+1$, and is $D(i_n \to i_{n+1})$ if $k=n+1$.  
		
		By \cite[Proposition 19.10]{Dug08}, $\holim Z$ is quasi-isomorphic to the total complex of the alternating double complex defined by $Z$.  Since each $Z^{n}$ is concentrated on degree $0$, the total complex is simply 
		$$\dots \to \prod\limits_{\Gamma^n} N \to \prod\limits_{\Gamma^{n+1}} N \to \dots$$
		and the alternating sum $\prod\limits_{\Gamma^{n}} N \to \prod\limits_{\Gamma^{n+1}} N$ is exactly the one which occurs in computing group cohomology.  We conclude that $\holim Y \simeq \holim Z \simeq C^{\bullet}(\Gamma, N)$.  
	\end{proof}
	
	Now we can prove \propref{main}:	
	
	\begin{proof}
		From the above discussions, $s\cD_{\rho_{T}}(T\oplus M[j])$ corresponds to $\tau^{\leq0}C^{\bullet+j+1}(\Gamma, \g_{T}\otimes_{T} M)$ under the Dold-Kan correspondence (with $\Ch_{\geq 0}(T)$ replaced by $\Ch^{\leq 0 }(T)$).  Hence $\pi_{i} s\cD_{\rho_{T}}(T\oplus M[j]) \cong H^{1+j-i} (\Gamma, \g_{T}\otimes_{T} M).$
	\end{proof}
	
	We can define the modifying-center version $s\cD_{\rho_{T},Z}$ as in Section \ref{modifying the center}.  Note the fibration sequence (see \cite[(5.7)]{GV18}) $\hofib( \sHom_{\sSet}(X,\BZ(A)) \to \sHom_{\sSet}(X,BZ(T))) \to s\cD_{\rho_{T}}(A) \to s\cD_{\rho_{T},Z}(A)$.  From this, we deduce that $\pi_{i} s\cD_{\rho_{T},Z}(T\oplus M[j]) \cong \pi_{i} s\cD_{\rho_{T}}(T\oplus M[j])$ when $i\neq j+1$, and $\pi_{i+1} s\cD_{\rho_{T},Z}(T\oplus M[i]) = 0$.
	
	For each $v\in S_p$, there is also a modifying-center version $s\cD_{\rho_{T,v},Z}$, resp. $s\cD^{\ord}_{\rho_{T,v},Z}$ of $s\cD_{\rho_{T,v}}$, resp. $s\cD^{\ord}_{\rho_{T,v}}$. Similarly to the global situation, we have:
	\begin{displaymath}
	\pi_{i} s\cD_{\rho_{T,v},Z}(T\oplus M[j]) \cong \left\{\begin{array}{l}{ H^{1+j-i} (\Gamma_v, \g_{T}\otimes_{T} M) \text{ when } i\neq j+1;} \\ { H^{0}(\Gamma_v, \g_{T}\otimes_{T} M)/ (\z_{T}\otimes_{T} M) \text{ when } i=j+1.}\end{array}\right.
	\end{displaymath}
	And 
	\begin{displaymath}
	\pi_{i} s\cD^{\ord}_{\rho_{T,v},Z}(T\oplus M[j]) \cong \left\{\begin{array}{l}{ H^{1+j-i} (\Gamma_v, \b_{T}\otimes_{T} M) \text{ when } i\neq j+1;} \\ { H^{0}(\Gamma_v, \b_{T}\otimes_{T} M)/ (\z_{T}\otimes_{T} M) \text{ when } i=j+1.}\end{array}\right.
	\end{displaymath}
	
	The global nearly ordinary derived deformation functor over $\rho_{T}$ is defined as follows:  $$s\cD^{\ord}_{\rho_{T},Z} = s\cD_{\rho_{T},Z} \times^{h}_{\prod_{v\in S_{p}} s\cD_{\rho_{T,v},Z}} \prod_{v\in S_{p}} s\cD_{\rho_{T,v},Z}^{\ord}.$$  
	Then $\pi_{i} s\cD^{?}_{\rho_{T},Z}(T\oplus M[j])$ ($?=\ord$ or $\emptyset$) depends only on $j-i$.  We denote $\gt_{T,M}^{j-i} s\cD^{?}_{\rho_{T},Z}= \pi_{i} s\cD^{?}_{\rho_{T},Z}(T\oplus M[j])$.
	
	\begin{pro}\label{relordcoh}
		Suppose $(Reg_{v})$ and $(Reg_{v}^{\ast})$.  Let $j \geq i\geq0$ and let $M$ be a finitely generated (classical) $T$-module.  Then $\pi_{i} s\cD^{\ord}_{\rho_{T},Z}(T\oplus M[j]) \cong H^{1+j-i}_{\ord} (\Gamma, \g_{T}\otimes_{T} M).$
	\end{pro}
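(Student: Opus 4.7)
The plan is to extract a Mayer--Vietoris long exact sequence from the homotopy fibre product that defines $s\cD^{\ord}_{\rho_T,Z}$, and compare it term by term with the defining sequence $(\bigstar)$ for ordinary Galois cohomology, now with coefficients in $\g_T\otimes_T M$. This is the natural relative analogue of the calculation carried out in the proof of the previous lemma ($\gt^i s\cD^{\ord}_Z\cong H^{i+1}_{\ord}(\Gamma,\g_k)$).

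Concretely, from
\[
s\cD^{\ord}_{\rho_T,Z} \;=\; s\cD_{\rho_T,Z}\times^h_{\prod_{v\in S_p} s\cD_{\rho_{T,v},Z}}\prod_{v\in S_p} s\cD^{\ord}_{\rho_{T,v},Z},
\]
the Mayer--Vietoris long exact sequence on homotopy groups, evaluated at $T\oplus M[j]$, reads
\[
\cdots\to \pi_{i+1}s\cD_{\loc,Z}\to \pi_i s\cD^{\ord}_{\rho_T,Z}\to \pi_i s\cD_{\rho_T,Z}\oplus \pi_i s\cD^{\ord}_{\loc,Z}\to \pi_i s\cD_{\loc,Z}\to \cdots
\]
For $i\neq j+1$, the three right-hand factors are computed by \propref{main} and the local formulas that follow it to be $H^{1+j-i}(\Gamma,\g_T\otimes_T M)$, $H^{1+j-i}(\Gamma_v,\g_T\otimes_T M)$ and $H^{1+j-i}(\Gamma_v,\b_T\otimes_T M)$ respectively; for $i=j+1$ the centre modification makes the local factors $H^0(\Gamma_v,\g_T\otimes_T M)/(\z_T\otimes_T M)$ and $H^0(\Gamma_v,\b_T\otimes_T M)/(\z_T\otimes_T M)$.

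By $(Reg_v)$, lifted to $M$-coefficients by d\'evissage along an $\m_T$-stable filtration of $M$ with residually $k$-linear quotients, the natural inclusion $\b_T\subset\g_T$ induces an isomorphism
\[
H^0(\Gamma_v,\b_T\otimes_T M)/(\z_T\otimes_T M)\;\isomto\; H^0(\Gamma_v,\g_T\otimes_T M)/(\z_T\otimes_T M),
\]
so the two $i=j+1$ contributions cancel in the Mayer--Vietoris sequence. What remains coincides termwise with the long exact sequence defining $H^{\ast}_{\ord}(\Gamma,\g_T\otimes_T M)$ from $(\bigstar)$, once we identify the image of $H^1(\Gamma_v,\b_T\otimes_T M)\to H^1(\Gamma_v,\g_T\otimes_T M)$ with the $M$-coefficient version of $L_v$ and use $(Reg_v^\ast)$ to close the sequence off at the top. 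A Five Lemma argument then yields $\pi_i s\cD^{\ord}_{\rho_T,Z}(T\oplus M[j])\cong H^{1+j-i}_{\ord}(\Gamma,\g_T\otimes_T M)$ for all $j\geq i\geq 0$.

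The main obstacle is the promotion of the two regularity conditions from $k$- to $M$-coefficients: one filters $M$ by $T$-submodules whose successive quotients are killed by $\m_T$ and therefore are $k$-vector spaces, and runs an induction on the length using the long exact sequences in Galois cohomology, the base case being the mod-$\m_T$ statement built into \lemref{tangent complex calculation}. A secondary bookkeeping point is to check that the Mayer--Vietoris connecting maps agree with those of the Greenberg--Wiles mapping cone defining $H^{\ast}_{\ord}$; this is a functoriality statement for the cone and is essentially forced by the naturality of the isomorphisms provided by \propref{main}, so it reduces to the already-handled absolute case.
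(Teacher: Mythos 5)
Your proposal follows essentially the same route as the paper: form the Mayer--Vietoris long exact sequence from the defining homotopy pullback, feed in the computations of \propref{main} and the local center-modified variants, and compare term by term with the sequence $(\bigstar)$ for $H^\ast_{\ord}(\Gamma,\g_T\otimes_T M)$. The $(Reg_v)$ cancellation of the $i=j+1$ local terms and the d\'evissage along an $\m_T$-stable filtration to pass from $k$- to $M$-coefficients are exactly the paper's "Artinian induction."

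The one place where your sketch is genuinely underdetermined is the clause "use $(Reg_v^\ast)$ to close the sequence off at the top." As stated this does not tell the reader what needs to vanish or why $(Reg_v^\ast)$ causes it. Concretely, after $(Reg_v)$ kills the $H^0$-terms, the Mayer--Vietoris sequence still carries $\bigoplus_{v\in S_p}H^2(\Gamma_v,\b_T\otimes_T M)$ in the window where $(\bigstar)$ has nothing, and this is what must vanish for the two sequences to match. The paper gets this vanishing from $(Reg_v^\ast)$ by: reducing, via the same length induction on $M$, to $H^2(\Gamma_v,\b_k)=0$; applying local Tate duality to convert this to $H^0(\Gamma_v,\b_k^\ast)=0$; and then using the $\Gamma_v$-equivariant isomorphism $\b_k^\ast\cong(\g_k/\n_k)(1)$, so that a nonzero invariant would produce some $\alpha\in\Phi^+\cup\{0\}$ with $\alpha\circ\bar\chi_v=\omega$, contradicting $(Reg_v^\ast)$ (the $\alpha=0$ part uses that $p$ is odd). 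You should make this chain explicit, since without $H^2(\Gamma_v,\b_T\otimes_T M)=0$ the Five Lemma comparison you invoke simply does not apply in the degrees around $\gt^1$ and $\gt^2$.

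Two smaller bookkeeping remarks. First, when matching with $(\bigstar)$ at $M$-coefficients you should record that you replace $L_v$ by $L_{v,T,M}=\im\bigl(H^1(\Gamma_v,\b_T\otimes_T M)\to H^1(\Gamma_v,\g_T\otimes_T M)\bigr)$; your phrase "the $M$-coefficient version of $L_v$" implicitly does this, but it is worth naming. Second, the termination of the Mayer--Vietoris sequence at $\gt^2\to 0$ tacitly uses that $H^3(\Gamma,\cdot)=0$ and $H^3(\Gamma_v,\cdot)=0$ (cohomological dimension $2$, $p$ odd, $S\supseteq S_p$); this is standard but belongs in a complete argument.
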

	
	\begin{proof}
		By preceding discussions, we have the exact sequence
		\begin{align*}
		0 &\to \gt_{T,M}^{-1} s\cD^{\ord}_{\rho_{T},Z} \to \bigoplus_{v \in S_p} H^{0}(\Gamma_v, \b_{T}\otimes_{T} M)/ (\z_{T}\otimes_{T} M) \to \bigoplus_{v \in S_p} H^{0}(\Gamma_v, \g_{T}\otimes_{T} M)/ (\z_{T}\otimes_{T} M) \\ &\to \gt_{T,M}^{0} s\cD^{\ord}_{\rho_{T},Z} \to H^{1}(\Gamma, \g_{T}\otimes_{T} M) \oplus (\bigoplus_{v \in S_p} H^{1}(\Gamma_{v}, \b_{T}\otimes_{T} M)) \to \bigoplus_{v \in S_p} H^{1}(\Gamma_{v}, \g_{T}\otimes_{T} M) \\ &\to \gt_{T,M}^{1} s\cD^{\ord}_{\rho_{T},Z} \to H^{2}(\Gamma, \g_{T}\otimes_{T} M) \to \bigoplus_{v \in S_p} H^{2}(\Gamma_{v}, \b_{T}\otimes_{T} M)  \\ &\to \gt_{T,M}^{2} s\cD^{\ord}_{\rho_{T},Z} \to 0
		\end{align*}
		Note that we have used $H^{2} (\Gamma_{v}, \b_{T}\otimes_{T} M) = 0$ for $v\in S_{p}$.  To see this,  it suffices to show $H^{2} (\Gamma_{v}, \b_{k}) = 0$ by Artinian induction.   By local Tate duality, it suffices to prove 
		$H^{0} (\Gamma_{v}, \b_{k}^{\ast}) = 0$.  But we have a Galois-equivariant ismomorphism 
		$\b_{k}^{\ast} \cong \g_{k}/\n_{k}(1)$, so the result follows from the assumption $(Reg_{v}^{\ast})$.
		
		Under the condition $(Reg_v)$, the map $H^{0}(\Gamma_v, \b_{T}\otimes_{T} M) \to H^{0}(\Gamma_v, \g_{T}\otimes_{T} M)$ is an isomorphism.  Let $L_{v,T,M} = \im(H^1(\Gamma_v,\b_{T}\otimes_{T} M)\to H^1(\Gamma_v,\g_{T}\otimes_{T} M))$, then we have the following exact sequence similar to ($\bigstar$):
		\begin{align*}
		0 \to &H^{0}_{\ord}(\Gamma, \g_{T}\otimes_{T} M) \to H^{0}(\Gamma, \g_{T}\otimes_{T} M) \to 0 \\
		\to &H^{1}_{\ord}(\Gamma, \g_{T}\otimes_{T} M) \to H^{1}(\Gamma, \g_{T}\otimes_{T} M) \to \bigoplus_{v \in S_p} H^{1}(\Gamma_v, \g_{T}\otimes_{T} M)/L_{v,T,M} \\
		\to &H^{2}_{\ord}(\Gamma, \g_{T}\otimes_{T} M) \to H^{2}(\Gamma, \g_{T}\otimes_{T} M) \to \bigoplus_{v \in S_p} H^{1}(\Gamma_v, \g_{T}\otimes_{T} M)\\
		\to &H^{3}_{\ord}(\Gamma, \g_{T}\otimes_{T} M) \to 0
		\end{align*}
		By comparing the two exact sequences above, we get $\gt_{T,M}^{i} s\cD^{\ord}_{\rho_{T},Z} \cong H^{i+1}_{\ord}(\Gamma, \g_{T}\otimes_{T} M)$.
	\end{proof}
	
	Recall that we have a pro-simplicial ring $R^{s,\ord}$ which represents $s\cD^{\ord}_{Z}$.  
	Then $\rho_{T}$ defines a map $$R^{s,\ord} \to \pi_{0}R^{s,\ord} \to T.$$  With this specified map, 
	we regard $R^{s,\ord} \in \mathrm{pro-} \sArtT$, and it's easy to see that $R^{s,\ord}$ 
	represents $s\cD^{\ord}_{\rho_{T},Z}$.  Write $R^{s,\ord} = (R_{k})$ for a projective system $(R_{k})$ in $\sArtT$.  Then 
	\begin{align*}
	\pi_{i} s\cD^{\ord}_{Z} (T \oplus M[j]) &\cong \pi_{i} \varinjlim_{k} \sHom_{\sArtT}(R_k, T \oplus M[j])  \\
	&\cong \pi_{i} \varinjlim_{k} \DK( \tau_{\geq0} [L_{R_k}, M[j]] ) \\
	&\cong H_i \varinjlim_{k} [L_{R_k}, M[j]].
	\end{align*}
	Let's define $[L_{R/\O}, M] = \varinjlim_{k} [L_{R_k /\O}, M]$ for $R= (R_{k}) \in \mathrm{pro-} \sArtT$.  Then $[L_{R^{s,\ord}/\O}, M]$, 
	when regarded as a cochain complex, has the same cohomology groups as the complex $\tau^{\geq0} C^{\bullet+1}_{\ord}(\Gamma,\g_{T}\otimes_{T} M )$. 
	We thus obtain the following corollary:
	
	\begin{cor}\label{ordcoh}
		For every finite $T$-module $M$, there is a quasi-isomorphism $$[L_{R^{s,\ord}/\O}, M]\simeq \tau^{\geq0} C^{\bullet+1}_{\ord}(\Gamma,\g_{T}\otimes_{T} M ).$$
	\end{cor}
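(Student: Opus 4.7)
The plan is to exploit the pro-representability of $s\cD^{\ord}_{Z}$ by $R^{s,\ord}$ together with the cotangent-derivation adjunction, and to identify both sides of the claimed quasi-isomorphism as computing the same invariant of $s\cD^{\ord}_{\rho_T,Z}$ via Dold-Kan. Concretely, for each $j\geq 0$, I would chain together three weak equivalences: first, the adjunction recalled in the previous subsection gives
\[
\sHom_{\sArtT}(R^{s,\ord}, T \oplus M[j]) \simeq \DK\bigl(\tau_{\geq 0}[L_{R^{s,\ord}/\O}, M[j]]\bigr);
\]
second, pro-representability identifies the left side with $s\cD^{\ord}_{\rho_T,Z}(T \oplus M[j])$; third, \propref{relordcoh} combined with the defining homotopy pullback for $s\cD^{\ord}_{\rho_T,Z}$ identifies the latter simplicial set with $\DK$ of a suitably shifted and truncated ordinary Galois cochain complex. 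Applying the normalized chain functor $N$ to the composition then yields, for each $j\geq 0$, a quasi-isomorphism of truncated chain complexes
\[
\tau_{\geq 0}[L_{R^{s,\ord}/\O}, M[j]] \simeq \tau_{\geq 0}\bigl(C^{\bullet+1}_{\ord}(\Gamma, \g_T \otimes_T M)[j]\bigr).
\]

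Next, I would translate the chain shift $[j]$ into cochain conventions. On the Galois side, $\tau_{\geq 0}\bigl(C^{\bullet+1}_{\ord}[j]\bigr)$ corresponds, in cochain degrees, to the initial segment of $\tau^{\geq 0}C^{\bullet+1}_{\ord}(\Gamma, \g_T \otimes_T M)$ in degrees $0,\ldots,j$; the analogous bookkeeping applies on the $[L_{R^{s,\ord}/\O}, M]$ side, which already lives in $\Ch^{\geq 0}(T)$. Since $\Gamma=G_{F,S}$ has bounded cohomological dimension, so that $C^{\bullet}_{\ord}$ has cohomology in only finitely many degrees, for $j$ sufficiently large both sides become truncation-free in the relevant range, and letting $j$ grow one extracts from these $j$-indexed truncated quasi-isomorphisms a single quasi-isomorphism
\[
[L_{R^{s,\ord}/\O}, M] \simeq \tau^{\geq 0} C^{\bullet+1}_{\ord}(\Gamma, \g_T \otimes_T M)
\]
in $\Ch^{\geq 0}(T)$.

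The main obstacle I foresee is the upgrade from the group-level identification already recorded in the preceding paragraph, matching $H^n[L_{R^{s,\ord}/\O},M]$ with $H^{n+1}_{\ord}(\Gamma,\g_T\otimes_T M)$, to a genuine quasi-isomorphism of complexes. The zigzag above produces actual maps only via Dold-Kan, and one must verify that these are functorial in $M$ and compatible as $j$ varies, so that the $j$-truncated quasi-isomorphisms glue into a single unshifted comparison. All the needed compatibilities are encoded in the definition of the cotangent complex via cofibrant replacement and in the Dold-Kan correspondence, but the verification that the zigzag yields a coherent quasi-isomorphism, rather than a merely cohomology-level identification, is the real content of the argument.
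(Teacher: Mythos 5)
Your proposal follows essentially the same route as the paper: pro-representability of $s\cD^{\ord}_{\rho_T,Z}$ by $R^{s,\ord}$ (regarded in $\mathrm{pro-}\sArtT$ via $\rho_T$), the cotangent-complex adjunction $\sHom_{\sArtT}(R_k, T\oplus M[j]) \simeq \DK(\tau_{\geq0}[L_{R_k},M[j]])$, \propref{relordcoh}, and Dold--Kan, with the colimit over the representing system defining $[L_{R^{s,\ord}/\O},M]$. The concern you raise about upgrading from cohomology-level agreement to a genuine quasi-isomorphism is also left implicit in the paper; it is resolved because the weak equivalences in the zigzag (in particular the chain-level identification in \lemref{calculation 2} and the homotopy-pullback description of $s\cD^{\ord}_{\rho_T,Z}$) already live at the level of simplicial sets, and normalization then furnishes the quasi-isomorphism of complexes.
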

	
\vskip 3mm
	
	{\bf Comments:} Let $\rho_T\colon \Gamma\to G(T)$ be an ordinary representation of weight $\mu$, which satisfies $(Reg_v)$ for all $v\in S_p$.
	This means that the cocharacter given by ${\rho}_T\vert_{I_v}\colon I_v\to B(T)/N(T)$ is given (via Artin reciprocity)
	by $\mu\circ rec_v^{-1}\colon I_v\to \cO_v^\times\to \Theta(T)$ (here $\Theta=B/N$ is the standard maximal split torus of $B$).
	In this whole section, if $\rho_T$ is ordinary of weight $\mu$, we could consider instead of the functor $s\cD^{\ord}_{\rho_T}$ 
the subfunctor $s\cD^{\ord,\mu}_{\rho_T}$ of ordinary deformations of 
fixed weight $\mu$. This means we impose as local condition at $v\in S_p$ that 
$$s\cD^{\ord,\mu}_{\rho_{T,v}}(A)=\hofib_{\mu\circ rec_v^{-1}}\left(s\cD^{\ord}_{\rho_{T,v}}(A)\to\sHom(BI_v,B\Theta(A))\right).$$
 Then, $s\cD^{\ord,\mu}_{\rho_T}$ is prorepresentable by a simplical pro-artinian ring $R^{s,\ord}_\mu$ and 
we have an analogue of \propref{relordcoh}:

\begin{pro} \label{relordmucoh} Suppose $(Reg_{v})$ and $(Reg_{v}^{\ast})$.  Let $j \geq i\geq0$ and let $M$ be a finitely generated (classical) $T$-module.  Then $\pi_{i} s\cD^{\ord,\mu}_{\rho_{T},Z}(T\oplus M[j]) \cong H^{1+j-i}_{\ord,str} (\Gamma, \g_{T}\otimes_{T} M).$
\end{pro}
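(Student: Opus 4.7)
The plan is to mimic the proof of \propref{relordcoh}, modifying the local contributions at $v\in S_p$ to account for the extra weight-fixing condition. The key structural fact is that $s\cD^{\ord,\mu}_{\rho_{T,v}}$ is, by definition, the homotopy fiber of the morphism $s\cD^{\ord}_{\rho_{T,v}}\to \sHom(BI_v, B\Theta(-))$ over the point $\mu\circ rec_v^{-1}$, so its tangent complex fits into a long exact sequence controlled by $s\cD^{\ord}_{\rho_{T,v}}$ and by a derived deformation functor of the inertial torus character.

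First I would compute the homotopy groups of $\sHom(BI_v, B\Theta(T\oplus M[j]))$ at the basepoint $\mu\circ rec_v^{-1}$. Since $\Theta$ is abelian, no center modification is needed, and the argument of \lemref{calculation} and \lemref{calculation 2} applies verbatim: one finds that these homotopy groups are $H^{1+j-i}(I_v,\t_T\otimes_T M)$ (for the relevant range of $i$). Combined with the description of $\pi_i s\cD^{\ord}_{\rho_{T,v},Z}(T\oplus M[j])$ already obtained, the long exact sequence attached to the fibration $s\cD^{\ord,\mu}_{\rho_{T,v},Z}\to s\cD^{\ord}_{\rho_{T,v},Z}\to \sHom(BI_v,B\Theta(-))$ shows that the tangent cohomology $\gt^{j-i}_{T,M} s\cD^{\ord,\mu}_{\rho_{T,v},Z}$ is computed by the (truncated) mapping cone of $C^\bullet(\Gamma_v,\b_T\otimes_T M)\to C^\bullet(I_v,\t_T\otimes_T M)$, suitably shifted. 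This cone is precisely the local strict ordinary complex at $v$.

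Next I would feed these local computations into the Mayer-Vietoris long exact sequence attached to the homotopy pullback
$$s\cD^{\ord,\mu}_{\rho_T,Z}= s\cD_{\rho_T,Z}\times^h_{\prod_{v\in S_p} s\cD_{\rho_{T,v},Z}}\prod_{v\in S_p} s\cD^{\ord,\mu}_{\rho_{T,v},Z},$$
exactly as in the proof of \propref{relordcoh}. Using $(Reg_v)$, the map $H^0(\Gamma_v,\b_T\otimes_T M)\to H^0(\Gamma_v,\g_T\otimes_T M)$ is an isomorphism, which kills the bottom terms of the sequence in the same way as before; using $(Reg^\ast_v)$ together with local Tate duality, the vanishing $H^2(\Gamma_v,\b_T\otimes_T M)=0$ persists, so the cone truncation is harmless. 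One then compares, term by term, the resulting long exact sequence for $\gt^\bullet_{T,M} s\cD^{\ord,\mu}_{\rho_T,Z}$ with the long exact sequence defining $H^\bullet_{\ord,str}(\Gamma,\g_T\otimes_T M)$, where the strict local condition at $v$ is given by the preimage of $\ker\bigl(H^1(\Gamma_v,\b_k)\to H^1(I_v,\t_k)\bigr)$ in $Z^1(\Gamma_v,\g_k)$. The identification is formal once the local matches are in place.

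The main obstacle is the bookkeeping at Step 1: one must verify that the extra factor $\sHom(BI_v,B\Theta(-))$ assembles with $s\cD^{\ord}_{\rho_{T,v}}$, after modifying the center, into a complex quasi-isomorphic to the strict ordinary local complex, with the correct signs, degree shifts, and interaction with the center $\z$. The cleanest way is to express each of the three functors involved through the Dold-Kan correspondence as truncated cochain complexes (as was done just before \corref{ordcoh}), and to read off the mapping cone directly; one then checks that the strict ordinary cohomology, defined via the cone of $C^\bullet(\Gamma,\g)\to\bigoplus_{v\in S_p} \mathrm{Cone}\bigl(C^\bullet(\Gamma_v,\g)/\tilde L_v\,\to\,C^\bullet(I_v,\t)\bigr)$ (with appropriate shifts), produces exactly the same Mayer-Vietoris sequence.
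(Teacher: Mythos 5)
Your proposal is correct and follows essentially the same route the paper intends: the paper's own proof is simply the remark that the argument is ``identical to \propref{relordcoh},'' and the content you supply --- computing the local tangent complex of $s\cD^{\ord,\mu}_{\rho_{T,v},Z}$ via the homotopy-fiber long exact sequence against $\sHom(BI_v,B\Theta(-))$, then running the Mayer--Vietoris comparison with the strict ordinary complex --- is precisely the bookkeeping that phrase suppresses. One small imprecision in your write-up: the strict local condition $\widetilde{L}'_v$ is the preimage in $Z^1(\Gamma_v,\g_T\otimes_T M)$ of the \emph{image} in $H^1(\Gamma_v,\g_T\otimes_T M)$ of $\ker\bigl(H^1(\Gamma_v,\b_T\otimes_T M)\to H^1(I_v,\t_T\otimes_T M)\bigr)$, rather than a preimage of the kernel itself (which lives in $H^1(\Gamma_v,\b)$, not in $H^1(\Gamma_v,\g)$); this two-step push-then-pull is exactly what the paper's definition of $(L'_v,\widetilde L'_v)$ records, and also note the coefficients should be $T$-module coefficients $\g_T\otimes_T M$ throughout rather than $k$.
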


Here $H^{1+j-i}_{\ord,str} (\Gamma, \g_{T}\otimes_{T} M)$ is the cohomology of the subcomplex $C_{\ord,\mu}^\bullet(\Gamma, \g_{T}\otimes_{T} M)$
defined as in Section 4.4.1, replacing $(L_v,\widetilde{L}_v)$ by $(L^\prime_v,\widetilde{L}^\prime_v)$ where $L^\prime_v$ is the image in $H^1(\Gamma_v,\g_T\otimes M)$
of the kernel of $H^1(\Gamma_v,\b_T\otimes M)\to H^1(I_v,(\b_T/\n_T)\otimes M)$, and $\widetilde{L}^\prime_v$ is the inverse image of $L^\prime_v$ in $Z^1(\Gamma_v,\g_T\otimes M)$.

The proof is identical to \propref{relordcoh}. As a corollary we get

	\begin{cor}\label{ordmucoh}
		For every finite $T$-module $M$, there is a quasi-isomorphism $$[L_{R^{s,\ord}_\mu/\O}, M]\simeq \tau^{\geq0} C^{\bullet+1}_{\ord,\mu}(\Gamma,\g_{T}\otimes_{T} M ).$$
	\end{cor}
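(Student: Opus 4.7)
The plan is to derive \corref{ordmucoh} from \propref{relordmucoh} by exactly the same chain of formal identifications that deduces \corref{ordcoh} from \propref{relordcoh}, only replacing the ordinary functor by its fixed-weight analogue and the ordinary Selmer complex by the strict variant $C^\bullet_{\ord,\mu}$.

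First I would invoke the prorepresentability of $s\cD^{\ord,\mu}_{\rho_T,Z}$ by $R^{s,\ord}_\mu = (R_k)_k$ in $\mathrm{pro\text{-}}\sArtT$, which is asserted just before the proposition. Write $[L_{R^{s,\ord}_\mu/\O},M] := \varinjlim_k [L_{R_k/\O},M]$, as in the discussion preceding \corref{ordcoh}. By the adjunction between the cotangent complex and the square-zero extension functor, together with the weak equivalence recalled in \remref{tronc}, one has for each $k$ a weak equivalence
\[
\sHom_{\sArtT}(R_k, T\oplus M[j]) \simeq \DK\bigl(\tau_{\geq 0}[L_{R_k/\O}, M[j]]\bigr).
\]
Passing to the filtered colimit over $k$ (which commutes with homotopy groups and with $\DK$) and applying $\pi_i$, I obtain
\[
\pi_i\, s\cD^{\ord,\mu}_{\rho_T,Z}(T\oplus M[j]) \;\cong\; H_i\bigl(\tau_{\geq 0}[L_{R^{s,\ord}_\mu/\O}, M[j]]\bigr) \;\cong\; H_{i-j}\bigl(\tau_{\geq 0}[L_{R^{s,\ord}_\mu/\O}, M]\bigr)
\]
for all $j\geq i \geq 0$, via the usual degree shift $M[j]=M[0]\otimes (\text{shift})$.

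Next I would apply \propref{relordmucoh}, which identifies the left-hand side with $H^{1+j-i}_{\ord,str}(\Gamma,\g_T\otimes_T M)$. Setting $n=j-i\geq 0$ and letting $j$ range freely while keeping $n$ fixed, this yields
\[
H^{-n}\bigl([L_{R^{s,\ord}_\mu/\O}, M]\bigr) \;\cong\; H^{n+1}_{\ord,str}(\Gamma,\g_T\otimes_T M)
\]
in every non-negative cohomological degree of the cochain complex $[L_{R^{s,\ord}_\mu/\O},M]\in \Ch^{\geq 0}(T)$, which is by definition the cohomology of $\tau^{\geq 0} C^{\bullet+1}_{\ord,\mu}(\Gamma,\g_T\otimes_T M)$.

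Finally, I would promote this isomorphism of cohomology groups to an actual quasi-isomorphism in $\Ch^{\geq 0}(T)$ exactly as in the unfixed-weight case: both complexes arise naturally as the image under $N\circ\text{(truncation)}$ of the total homotopy fiber of the diagram of simplicial rings defining the functor, and the comparison map constructed in the proof of \propref{relordmucoh} (from the Mayer--Vietoris identification with $C^{\bullet+1}_{\ord,\mu}$) is already a map of complexes inducing the above isomorphism on cohomology. The only substantive input specific to the fixed-weight setting is the replacement of the Selmer conditions $(L_v,\widetilde L_v)$ by $(L^\prime_v,\widetilde L^\prime_v)$; once \propref{relordmucoh} is granted, this step is purely formal. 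The main obstacle, therefore, lies entirely in \propref{relordmucoh} itself, and specifically in verifying that the analogue of the vanishing $H^2(\Gamma_v,\b_T\otimes M)=0$ carries over under the strict local conditions at $v\in S_p$ so that the Mayer--Vietoris spectral sequence collapses to give $\gt^i_{T,M} s\cD^{\ord,\mu}_{\rho_T,Z}\cong H^{i+1}_{\ord,str}(\Gamma,\g_T\otimes_T M)$.
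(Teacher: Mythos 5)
Your proposal follows essentially the same route as the paper, which simply states that the argument is identical to the derivation of Corollary~\ref{ordcoh} from Proposition~\ref{relordcoh}: replace $R^{s,\ord}$ by $R^{s,\ord}_\mu$, use prorepresentability and the cotangent-complex adjunction via Dold--Kan, identify the homotopy groups by Proposition~\ref{relordmucoh}, and read off the matching cohomology. One minor slip to watch: in your middle display you write $H_{i-j}\bigl(\tau_{\geq 0}[L_{R^{s,\ord}_\mu/\O},M]\bigr)$, but since $[L_{R^{s,\ord}_\mu/\O},M]$ lives in $\Ch^{\geq 0}(T)=\Ch_{\leq 0}(T)$, applying $\tau_{\geq 0}$ in chain degree to it does not make sense; what you actually use (and what the paper uses) is $H_i(\tau_{\geq 0}[L,M[j]])=H_i([L,M[j]])=H^{j-i}([L,M])$ for $i\geq 0$, which then gives $H^n([L,M])\cong H^{n+1}_{\ord,str}$ for every $n\geq 0$. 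Your closing paragraph, observing that the only nontrivial content is in Proposition~\ref{relordmucoh} (the strict local conditions and the vanishing $H^2(\Gamma_v,\b_T\otimes M)=0$), is accurate but is commentary on the given proposition rather than part of the corollary's proof; and your promotion from ``same cohomology'' to an actual quasi-isomorphism is, like the paper's, asserted via naturality of the comparison rather than spelled out in detail.
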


 In the next section, we shall use these objects with a fixed weight $\mu$.

		\section{Application to the Galatius-Venkatesh homomorphism}\label{GVmorphism}
		
		Let $\Gamma=\Gal(F_S/F)$ for $S=S_p\cup S_\infty$.
		Let $\bar{\rho}\colon \Gamma\to G(k)$ be an ordinary representation of weight $\mu$, which satisfies $(Reg_v)$ for all $v\in S_p$.
		Let $T$ be a finite local $\cO$-algebra and $\rho_T\colon\Gamma\to G(T)$ be an ordinary lifting of weight $\mu$ of $\bar{\rho}$. 
		Let $M$ be a $T$-module which is of $\cO$-cofinite type, that is, whose Pontryagin dual $\Hom_{\cO}(M,K/\cO)$ is finitely generated over $\cO$.
		We use the notations of \defref{dual}. 
		Recall that if $\bar{\rho}\colon \Gamma\to G(k)$ is ordinary automorphic, it is proven under certain assumptions 
		(see \cite[Th.5.11]{CaGe18} and \cite[Lemma 11]{TU20}) that $H^1_{\ord}(\Gamma, \g_T\otimes_T M)$ 
		is finite and $H^1_{\ord^\perp}(\Gamma, \g^\ast_T\otimes_T M)^\vee$ is of $\cO$-cofinite type.
		Let $T_n=T/(\varpi^n)$; it is a finite algebra over $\cO_n=\cO/(\varpi^n)$. 
		Let $\cR=R^{s,\ord}_\mu$, which prorepresents simplicial ordinary deformations of weight $\mu$.
		We consider the simplicial ring homomorphism 
		$$\phi_n\colon \cR\to T_n$$
		given by the universal property for the deformation $\rho_n=\rho_\m \pmod{(\varpi^n)}$.
		Let $T_n=T/(\varpi^n)$; it is a finite algebra over $\cO_n=\cO/(\varpi^n)$. 
		We consider the simplicial ring homomorphism 
		$$\phi_n\colon \cR\to T_n$$
		given by the universal property for the deformation $\rho_n=\rho_\m \pmod{(\varpi^n)}$.
		Let $M_n$ be a finite $T_n$-module.
		Consider the simplicial ring $\Theta_n=T_n\oplus M_n[1]$ concentrated in degrees $0$ and $1$ up to homotopy.
		It is endowed with a simplicial ring homomorphism $\pr_n\colon \Theta_n\to T_n$ given by the first projection.
		Let $L_n(\cR)$ be the set of homotopy equivalence classes of simplicial ring homomorphisms $\Phi\colon \cR\to \Theta_n$ such that
		$\pr_n\circ\Phi=\phi_n$. By \propref{relordmucoh}, there is a canonical bijection 
		$$L_n(\cR) \cong H^2_{\ord,str}(\Gamma,\g_{T_n}\otimes_{T_n}M_n).$$
		
		Moreover, as noticed in \cite[Lemma 15.1]{GV18}, there is a natural map
		$$\pi(n,\cR)\colon L_n(\cR)\to \Hom_{T}(\pi_1(\cR),M_n)$$
		defined as follows. Let $[\Phi]$ be the homotopy class of $\Phi\in \Hom_T(\cR, \Theta_n)$; then $\pi(\cR)(\Phi)$ is the homomorphism 
		which sends the homotopy class $[\gamma]$ of a loop $\gamma$ to 
		$\Phi\circ\gamma\in \Hom_{\sSet}(\Delta[1],M_n[1])=M_n$. Recall a loop $\gamma$ is a morphism of $\sSet$
		$$\gamma\colon \Delta[1]\to \Theta_n$$
		from the simplicial interval $\Delta[1]$ to the simplicial set $\Theta_n$  
		which sends the boundary $\partial \Delta[1]$ to $0$.
		For $G=\GL_N$ and $F$ a CM field (assuming Calegari-Geraghty assumptions), it is proven in \cite{TU20} that
		
		\begin{pro} For any $n\geq 1$, the map $\pi(n,\cR)$ is surjective.
		\end{pro}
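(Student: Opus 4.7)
The plan is to reinterpret $\pi(n,\cR)$ in terms of the cotangent complex $L:=L_{\cR/\cO}\otimes_{\cR}T_n$, identify it with the evaluation-on-$\pi_1$ map of a universal coefficient short exact sequence, and verify that the resulting $\Ext^2$-obstruction vanishes.

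First, by the pro-representability of $s\cD^{\ord,\mu}_{\rho_T,Z}$ by $\cR$ and the standard adjunction between simplicial square-zero extensions and derived derivations, one has
\[
\sHom_{\sArtT}(\cR,\,T_n\oplus M_n[1])\;\simeq\;\sHom_{T_n}(L,\,M_n[1]).
\]
Taking $\pi_0$ and invoking \corref{ordmucoh} recovers the identification $L_n(\cR)\cong H^2_{\ord,str}(\Gamma,\g_{T_n}\otimes_{T_n}M_n)$ from \propref{relordmucoh}. The functoriality of $\pi_1$ turns $\pi(n,\cR)$ into the natural evaluation map on the right-hand side; combined with the Hurewicz-type identification $\pi_1(\cR)\cong\pi_1(L)$, valid for connective simplicial commutative pro-Artinian rings with $\pi_0=T$, this exhibits $\pi(n,\cR)$ as the natural map
\[
\pi_0\sHom_{T_n}(L,\,M_n[1])\;\longrightarrow\;\Hom_{T_n}(\pi_1(L),\,M_n).
\]

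Second, by standard homological algebra (Postnikov truncation of $L$ in degrees $\leq 1$ combined with the associated long exact sequence in hyperext), one obtains a canonical four-term exact sequence
\[
0\to\Ext^1_{T_n}(\Omega^1_{T_n/\cO},M_n)\to L_n(\cR)\xrightarrow{\pi(n,\cR)}\Hom_{T_n}(\pi_1(\cR),M_n)\xrightarrow{d}\Ext^2_{T_n}(\Omega^1_{T_n/\cO},M_n),
\]
where we have used $\pi_0(L)=\Omega^1_{T_n/\cO}$. Surjectivity of $\pi(n,\cR)$ thus reduces to the vanishing of the edge map $d$, and is implied by the vanishing of the rightmost $\Ext^2$.

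The main obstacle is this $\Ext^2$-vanishing. It holds trivially when $T=\cO$, in which case $\Omega^1_{T_n/\cO}=0$. For general $T$, this is where the Calegari--Geraghty hypotheses for $G=\GL_N$ and $F$ a CM field enter crucially: the finiteness of $H^1_{\ord}(\Gamma,\g_T\otimes_T M)$ and the $\cO$-cofinite type of $H^1_{\ord^\perp}(\Gamma,\g^\ast_T\otimes_T M)^\vee$ recalled just before the proposition translate, via \corref{ordmucoh} and Poitou--Tate duality, into structural control on $\cR$---in particular a complete-intersection-type bound on its cotangent complex---forcing the required $\Ext^2$ to vanish. The detailed verification is carried out in \cite{TU20}.
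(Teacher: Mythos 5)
The paper does not actually prove this proposition: it is stated as a result of \cite{TU20} (for $G=\GL_N$, $F$ CM, under the Calegari--Geraghty hypotheses), so there is no in-text argument to compare yours against. Judged on its own terms, your outline has the right general shape (the cotangent-complex adjunction, a universal-coefficient analysis, an obstruction in a second Ext group), but two of its intermediate identifications are wrong as stated. First, $\pi_0\bigl(L_{\cR/\cO}\otimes_{\cR}T_n\bigr)$ is $\Omega_{\pi_0\cR/\cO}\otimes_{\pi_0\cR}T_n$, not $\Omega^1_{T_n/\cO}$; the map $\phi_n$ factors through the classical ring $\pi_0\cR=R^{\ord}_\mu$, and replacing it by $T_n$ already presupposes an $R=T$-type statement that you never invoke. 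Second, the ``Hurewicz-type identification $\pi_1(\cR)\cong\pi_1(L)$'' is not valid for connective simplicial rings in general: the transitivity triangle for $\cO\to\cR\to\pi_0\cR$ shows that $\pi_1\bigl(L_{\cR/\cO}\otimes_{\cR}T_n\bigr)$ sits in an exact sequence involving $\pi_1(\cR)\otimes_{\pi_0\cR}T_n$ \emph{and} the Andr\'e--Quillen homology groups $D_1(\pi_0\cR/\cO)\otimes T_n$ and $D_2(\pi_0\cR/\cO)\otimes T_n$, which need not vanish for an Artinian (hence non-smooth) $\pi_0\cR$. Since $\pi(n,\cR)$ is by definition ``apply $\pi_1$ to $\Phi$,'' it is the composite of your edge map with restriction along the Hurewicz map $\pi_1(\cR)\otimes T_n\to\pi_1(L)$, and without controlling the kernel and cokernel of that map you cannot reduce surjectivity of $\pi(n,\cR)$ to the vanishing of a single $\Ext^2$ of $\pi_0(L)$.

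The more serious gap is that the decisive step is not argued at all. You identify the obstruction as an $\Ext^2$ and then assert that the finiteness of $H^1_{\ord}$ and the cofiniteness of the dual Selmer group ``force'' it to vanish, with ``the detailed verification carried out in \cite{TU20}.'' But the proposition itself \emph{is} the statement proven in \cite{TU20}; deferring the only nontrivial step back to that reference makes the argument circular rather than a proof. Note also that your claim that the obstruction ``holds trivially when $T=\cO$'' relies on $\Omega^1_{T_n/\cO}=0$, which is an artifact of the incorrect identification of $\pi_0(L)$: the actual obstruction group for $T=\cO$ involves $D_{\ast}(\pi_0\cR/\cO)$ and does not vanish for free. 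To make this strategy work you would need to (i) state and use the identification $\pi_0\cR\cong T$, (ii) control the Andr\'e--Quillen terms $D_1,D_2$ of $\pi_0\cR$ over $\cO$ (e.g.\ via a presentation of $\pi_0\cR$ as a quotient of a power series ring by a regular sequence, which is where the Calegari--Geraghty numerology genuinely enters), and (iii) show the resulting edge differential vanishes — none of which is currently present.
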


		Then, we choose $M_n=\Hom(T,\varpi^{-n}\cO/\cO)$; we take the Pontryagin dual $\pi(n,\cR)^\vee$ and apply Poitou-Tate duality
		
		$$H^2_{\ord,str}(\Gamma,\g_{T_n}\otimes_{T_n}M_n) \cong H^1_{\ord,str}(\Gamma,(\g_{T_n}\otimes_{T_n}M_n)^\ast).
		$$
		We obtain a $T$-linear homomorphism called the mod. $\varpi^n$ Galatius-Venkatesh homomorphism:
		
		$$GV_n\colon \Hom_T(\pi_1(\cR),M_n)^\vee\hookrightarrow H^1_{\ord,str}(\Gamma,(\g_{T_n}\otimes_{T_n}M_n)^\ast)
		$$
		
		The left hand side is $\pi_1(\cR)\otimes\varpi^{-n}/ \cO$  and the right hand side is $\Sel_{\ord,str}(\Ad(\rho_n)(1))$.
		Taking inductive limit on both sides we obtain 
		
		\begin{pro} There is a canonical $T$-linear injection
			$$GV_T\colon \pi_1(R^{s,\ord})\otimes_\cO K/\cO\hookrightarrow \Sel(\Ad(\rho_T)^\vee(1))$$
		\end{pro}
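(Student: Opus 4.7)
The plan is to obtain $GV_T$ as the inductive limit $\varinjlim_n GV_n$. Three things require verification: the compatibility of the system $(GV_n)_{n\geq 1}$, the computation of the two inductive limits, and the preservation of injectivity under the colimit.

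First, I verify compatibility. The modules $M_n=\Hom(T,\varpi^{-n}\cO/\cO)$ form an inductive system via the inclusions $\varpi^{-n}\cO/\cO\hookrightarrow \varpi^{-n-1}\cO/\cO$, and the simplicial rings $\Theta_n=T_n\oplus M_n[1]$ are compatible with the reductions $T_{n+1}\twoheadrightarrow T_n$. The construction of $GV_n$ combines three pieces, each functorial in $M$: the bijection $L_n(\cR)\cong H^2_{\ord,str}(\Gamma,\g_{T_n}\otimes_{T_n} M_n)$ coming from \propref{relordmucoh}, the evaluation-at-loops map $\pi(n,\cR)$ (defined directly from $\Phi\mapsto(\gamma\mapsto \Phi\circ\gamma)$), and the Poitou--Tate pairing
\[
H^2_{\ord,str}(\Gamma,\g_{T_n}\otimes M_n)\times H^1_{\ord^\perp,str^\perp}(\Gamma,(\g_{T_n}\otimes M_n)^\ast)\to K/\cO.
\]
Each is natural in the coefficient module, and together they assemble into a commutative ladder relating $GV_n$ and $GV_{n+1}$.

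Second, I identify the two limits. On the source side, tensor--hom adjunction applied to $M_n=\Hom_\cO(T,\varpi^{-n}\cO/\cO)$ gives $\Hom_T(\pi_1(\cR),M_n)\cong \Hom_\cO(\pi_1(\cR),\varpi^{-n}\cO/\cO)$, whose Pontryagin dual is canonically $\pi_1(\cR)\otimes_\cO\varpi^{-n}\cO/\cO$ since $\pi_1(\cR)$ is finitely generated over $\cO$; passing to the colimit yields $\pi_1(\cR)\otimes_\cO K/\cO$. On the target side, the isomorphism $(\g_{T_n}\otimes_{T_n}M_n)^\ast\cong \Ad(\rho_n)^\vee(1)$ as Galois modules, together with $\varinjlim_n \Ad(\rho_n)^\vee(1)\cong \Ad(\rho_T)^\vee(1)$, and the fact that the strict weight-$\mu$ ordinary local conditions (being defined by images of continuous Galois cohomology of Lie algebras) commute with filtered colimits of discrete coefficients, identify $\varinjlim_n H^1_{\ord,str}(\Gamma,(\g_{T_n}\otimes M_n)^\ast)$ with $\Sel(\Ad(\rho_T)^\vee(1))$.

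Finally, filtered colimits in abelian groups are exact, hence preserve injections; dualizing the surjections $\pi(n,\cR)$ therefore yields a system of injections whose colimit is the desired $T$-linear injection $GV_T$. The most delicate ingredient is the compatibility of the Poitou--Tate pairings with respect to the tower $(M_n)_n$: this reduces to the naturality of the local Tate cup products under change of coefficients, combined with the observation that the transition $M_n\hookrightarrow M_{n+1}$ is Pontryagin dual to $T_{n+1}\twoheadrightarrow T_n$, which pairs correctly with the Tate twist appearing on the dual side. Once this functoriality is established, the remaining steps are formal manipulations with filtered colimits.
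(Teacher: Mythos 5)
Your proposal is correct and takes essentially the same route as the paper: the paper's entire argument for this proposition is to construct the injections $GV_n$ for each $n$ and then state ``taking inductive limit on both sides we obtain'' the result, and your verifications (compatibility of the system $(GV_n)$ via naturality in the coefficient module, identification of the source colimit with $\pi_1(\cR)\otimes_\cO K/\cO$ and of the target colimit with $\Sel(\Ad(\rho_T)^\vee(1))$, and exactness of filtered colimits to preserve injectivity) are precisely the details the paper leaves implicit in that step.
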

		
		For $G=\GL_N$, $F$ CM and under Calegary-Geraghty assumptions, and for $T$ the non Eisenstein localization of the Hecke algebra acting faithfully on the Betti cohomology,
		it follows from \cite[Theorem 5.11]{CaGe18} that the left-hand side is $\varpi$-divisible of corank $rk(T)$
		and it is proven in \cite[Lemma 11]{TU20} that the right-hand side has corank $rk(T)$.
		For any $\cO$-finitely generated ordinary $\Gamma$-module $M$ such that the Selmer group $H^1_{\ord,str}(\Gamma,M\otimes\Q/\Z)$ is $\cO$-cofinitely generated, 
	we define its Tate-Shafarevitch module as
	$$\Sha(M)=H^1_{\ord,str}(\Gamma,M\otimes\Q/\Z)/H^1_{\ord,str}(\Gamma,M\otimes\Q/\Z)_{\varpi-div}.$$
It is the torsion quotient of $\H^1_{\ord,str}(\Gamma,M\otimes\Q/\Z)$. For any $\cO$-algebra homomorphism
$\lambda\colon T\to \cO$, let $\rho_\lambda=\rho_T\otimes_{\lambda}\cO$.
For $M=\Ad(\rho_\lambda)^\vee(1))$, one shows in \cite[Lemma 11]{TU20}, using Poitou-Tate duality, 
that $\Sha(\Ad(\rho_\lambda)^\vee(1)))$ is Pontryagin dual to $\Sel_{\ord,str}(\Ad(\rho_\lambda))$.

It follows from \cite[Lemma 11]{TU20} that the cokernel of $GV_\lambda$ can be identified to the 
Tate-Shafarevitch group $\Sha(\Ad(\rho_\lambda)^\vee(1))$ in the sense of Bloch-Kato. So that
$$\Coker\,GV_\lambda\cong  \Sel_{\ord,str}(\Ad(\rho_\lambda))^\vee.$$

	{}
	
	Yichang Cai, Jacques Tilouine, LAGA, UMR 7539, Institut Galil\'ee, Universit\'e de Paris 13, USPN, 99 av. J.-B. Cl\'ement, 93430, Villetaneuse, FRANCE
	
\end{document}